\newcounter {subsubsubsection}[subsubsection]
\renewcommand\thesubsubsubsection{\thesubsubsection .\@arabic\c@subsubsubsection}
\newcommand\subsubsubsection{\@startsection{subsubsubsection}{4}{\z@}%
                                     {-3.25ex\@plus -1ex \@minus -.2ex}%
                                     {1.5ex \@plus .2ex}%
                                     {\normalfont\normalsize\bfseries}}
\renewcommand\paragraph{\@startsection{paragraph}{5}{\z@}%
                                    {3.25ex \@plus1ex \@minus.2ex}%
                                    {-1em}%
                                    {\normalfont\normalsize\bfseries}}
\renewcommand\subparagraph{\@startsection{subparagraph}{6}{\parindent}%
                                       {3.25ex \@plus1ex \@minus .2ex}%
                                       {-1em}%
                                      {\normalfont\normalsize\bfseries}}
\newcommand*\l@subsubsubsection{\@dottedtocline{4}{10.0em}{4.1em}}
\renewcommand*\l@paragraph{\@dottedtocline{5}{10em}{5em}}
\renewcommand*\l@subparagraph{\@dottedtocline{6}{12em}{6em}}
\newcommand*{\subsubsubsectionmark}[1]{}
\newcommand\restr[2]{{
  \left.\kern-\nulldelimiterspace 
  #1 
  \vphantom{\big|} 
  \right|_{#2} 
  }}
\def\toclevel@subsubsubsection{4}
\def\toclevel@paragraph{5}
\def\toclevel@subparagraph{6}
\newcommand{\C}{\mathbb{C}}
\DeclareFontFamily{OT1}{pzc}{}
\DeclareFontShape{OT1}{pzc}{m}{it}{<-> s * [1.10] pzcmi7t}{}
\DeclareMathAlphabet{\mathpzc}{OT1}{pzc}{m}{it}
\newcommand{\overbar}[1]{\mkern 1.5mu\overline{\mkern-1.5mu#1\mkern-1.5mu}\mkern 1.5mu}
\newcommand{\norm}[1]{\left\lVert#1\right\rVert}
\newcommand{\abs}[1]{\left\lvert#1\right\rvert}
\newtheorem{theorem}{Theorem}
[section]
\newtheorem{thmx}{Theorem}
\theoremstyle{definition}
\newtheorem{definition}{Definition}[section]
\theoremstyle{plain}
\newtheorem{proposition}[theorem]{Proposition}
\theoremstyle{remark}
\theoremstyle{definition}
\newcommand{\del}{\partial}
\newcommand{\dbar}{\bar{\partial}}
\newcommand{\ddbar}{\del\dbar}
\newcommand{\ricci}{\mathrm{\mathbf{Ric}}}
\newcommand{\holosections}{\Gamma_{\mathcal{O}}}
\newcommand{\HH}{\mathcal{H}^2}
\tikzstyle{startstop} = [rectangle, rounded corners, minimum width=3cm, minimum height=1cm,text centered, draw=black]
\tikzstyle{process} = [rectangle, minimum width=3cm, minimum height=1cm, text centered, draw=black]
\tikzstyle{arrow} = [thick,->,>=stealth]
\long\def\pgfshapeaddanchor#1#2{%
{%
  \def\pgf@sm@shape@name{#1}%
  \let\anchor=\pgf@sh@anchor%
  #2}%
}
\title{Twisted Nakano-Positivity of Fields of Hilbert Spaces}
\author{E. M. Ainasse}
\date{\today}
\begin{document}
\maketitle

\begin{abstract}
In \citep{ainasse2021twisted}, we generalized Berndtsson's Nakano-positivity by retaining the same consequences under weaker hypotheses. In this article, we propose to further generalize our ``twisted'' Nakano-positivity theorem to fields of Hilbert spaces associated to possibly unbounded Stein manifolds. We achieve this result using exhaustion arguments. As direct applications, we prove $\log$-plurisubharmonic variation results for a certain class of non-trivial families of Stein manifolds.
\end{abstract}

\section{Introduction and description of the main results}\label{introduction}

Let $X$ be a Stein manifold equipped with a Kähler metric $g$, let $V \rightarrow X$ be a holomorphic vector bundle, and let $U$ be a domain in $\C^m$ containing the origin. Let $h$ be a metric for the pullback bundle $\pi^{*}_{\overbar{X}} V \rightarrow U \times \overbar{X}$, where $\pi_{\overbar{X}} : U \times \overbar{X} \rightarrow \overbar{X}$ is the canonical projection, and define $h^{[t]} := i^{*}_t h$ where $i_t$ denotes the inclusion map $\overbar{X} \hookrightarrow \{t\} \times \overbar{X}$. This defines a smooth family $\left\{h^{[t]}\right\}_{t \in U}$ of smooth Hermitian metrics for $V \rightarrow X$, and we can then consider the space $L^2\left(X,h^{[t]}\right)$ of sections $f$ of $V \rightarrow X$ whose norm $\abs{f}_{h^{[t]}}$, with respect to the metric $h^{[t]}$, is square-integrable on $X$ with respect to the volume form $dV_g$ induced by the Kähler metric $g$. For each $t \in U$, we can further consider the space $\HH\left(X,h^{[t]}\right)$ of holomorphic sections of $V \rightarrow X$ in $L^2\left(X,h^{[t]}\right)$.

When $X$ is a relatively compact submanifold of an ambient Stein Kähler manifold $(Y,g)$, $\HH\left(X,h^{[t]}\right)$ is a closed subspace of $L^2\left(X,h^{[t]}\right)$ -- hence a Hilbert space -- and the Hilbert spaces in the collection $\left\{\HH\left(X,h^{[t]}\right)\right\}_{t \in U}$ have equivalent norms. Thus, the underlying vector spaces of the Bergman spaces $\mathcal{H}^2\left(X,h^{[t]}\right)$ are equal as subspaces of the space $\holosections(X,V)$. By fixing $\mathcal{H}^2_0 := \HH\left(X,h^{[0]}\right)$, we can define the bundle $E_h$ of infinite rank over $U$ with total space $U \times \mathcal{H}^2_0$, whose fiber over $t \in U$ is $\{t\} \times \mathcal{H}^2_0 \cong \HH_t =: \mathcal{H}^2\left(X,h^{[t]}\right)$. It is a trivial Hilbert bundle equipped with the non-trivial Hermitian metric $\left(\cdot,\cdot\right)_{h^{[t]}}$, varying in $t$, induced by the $L^2$-norm on $\mathcal{H}^2_t$.\\

In \citep{ainasse2021twisted} we proved a twisted Nakano-positivity theorem for such Hilbert bundles.

\begin{theorem}\label{thm-nakano-positivity-smooth-bounded-stein}\emph{(\citep[Theorem A]{ainasse2021twisted})}
Let $X$ be an $n$-dimensional relatively compact pseudoconvex subdomain of an ambient Stein Kähler manifold $(Y,g)$. Let $V \rightarrow \overbar{X}$ be a holomorphic vector bundle. Let $U \subset \C^m$ be a domain, and let $\left\{h^{[t]}\right\}_{t \in U}$ be a family of smooth Hermitian metrics for $V \rightarrow \overbar{X}$. Let $\delta > 0$ and let $\eta$ be a smooth function on $Y$. If $\Xi_{\delta,\eta}(h) >_{\mathrm{Griff}} 0$ and
$$\dbar_X\left(\left(h^{[t]}\right)^{-1}\del_X h^{[t]}\right) + \left(\ricci(g) + 2\del_X\dbar_X\eta-(1+\delta)\del_X\eta\wedge\dbar_X\eta\right) \otimes \mathrm{Id}_V >_{\mathrm{Nak}} 0,$$
for each $t \in U$, then the holomorphic Hermitian bundle $\left(E_h,\left(\cdot,\cdot\right)_{h^{[t]}}\right)$ is Nakano positive. Moreover, if either $\Xi_{\delta,\eta}(h) \geq_{\mathrm{Griff}} 0$ or $$\dbar_X\left(\left(h^{[t]}\right)^{-1}\del_X h^{[t]}\right) + \left(\ricci(g) + 2\del_X\dbar_X\eta-(1+\delta)\del_X\eta\wedge\dbar_X\eta\right) \otimes \mathrm{Id}_V \geq_{\mathrm{Nak}} 0,$$

for each $t \in U$, then $\left(E_h,\left(\cdot,\cdot\right)_{h^{[t]}}\right)$ is Nakano semipositive.
\end{theorem}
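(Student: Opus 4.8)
The plan is to adapt Berndtsson's computation of the curvature of Bergman-type Hilbert bundles, but to replace the untwisted $L^2$-machinery on the fibres by a \emph{twisted} Bochner--Kodaira--Hörmander--Morrey estimate governed by the weight $\eta$ and the parameter $\delta$, and then to dispense with any smoothness assumption on $\del X$ by an exhaustion. One first proves the statement when $X$ has smooth strictly pseudoconvex boundary; the general relatively compact pseudoconvex $X$ is written as an increasing union $X=\bigcup_\nu X_\nu$ of such domains and the positivity is transferred to the limit.

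On a smoothly bounded strictly pseudoconvex $X_\nu$, fix a base point $t_0\in U$ and a germ of holomorphic section $t\mapsto u_t\in\HH(X_\nu,h^{[t]})$ of $E_h$ at $t_0$ (legitimate since $E_h$ is the trivial bundle $U\times\mathcal{H}^2_0$). Differentiating $(u_t,u_t)_{h^{[t]}}=\int_{X_\nu}\abs{u_t}^2_{h^{[t]}}\,dV_g$ twice in $t$, using the Cauchy--Riemann equations to identify the vertical component of $\dbar_X$ of the $t$-derivative of $u$ with the variation $\del_X h^{[t]}$ of the metric, and integrating by parts on $X_\nu$ (the boundary contribution having a good sign by pseudoconvexity), one obtains Berndtsson's curvature identity for the Chern connection of $\left(E_h,(\cdot,\cdot)_{h^{[t]}}\right)$ over $X_\nu$. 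Evaluated on a fixed $E_h$-valued $(1,0)$-vector $\xi=\sum_j dt_j\otimes u_j$ at $t_0$ it has the schematic shape
$$\left\langle\Theta^{E_h}(\xi),\xi\right\rangle_{t_0}=\int_{X_\nu}\left\langle\Xi_{\delta,\eta}(h)\,\xi,\xi\right\rangle\ +\ (\text{a manifestly non-negative ``horizontal'' term})\ +\ (\text{a ``vertical'' term}),$$
where the vertical term is built from the minimal $L^2$-solution on $X_\nu$ of a $\dbar_X$-equation whose datum is determined by $\del_X h^{[t]}$, and where $\left\langle\Xi_{\delta,\eta}(h)\,\xi,\xi\right\rangle$ gathers the fibre-direction curvature of the total metric $h$ on $\pi^*_{\overbar{X}}V$, twisted by $\eta$ and $\delta$ and corrected by $\ricci(g)$ (the latter entering because $L^2\!\left(X,h^{[t]}\right)$ is realised through $L^2$ sections of $K_X\otimes V$).

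The heart of the argument is to control the vertical term. Instead of the untwisted Hörmander estimate — which would force the full Nakano-positivity of $\dbar_X\!\left((h^{[t]})^{-1}\del_X h^{[t]}\right)+\ricci(g)\otimes\mathrm{Id}_V$ — one runs the twisted Bochner--Kodaira identity with the weight multiplied by $e^{-\eta}$ together with the Donnelly--Fefferman/Berndtsson $\delta$-device, in which the gradient term $-(1+\delta)\del_X\eta\wedge\dbar_X\eta$ is precisely what is produced to absorb, by Cauchy--Schwarz, the error of commuting the weight past $\dbar_X$; on the strictly pseudoconvex $X_\nu$ the Morrey boundary term is again non-negative. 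The hypothesis $$\dbar_X\!\left((h^{[t]})^{-1}\del_X h^{[t]}\right)+\left(\ricci(g)+2\del_X\dbar_X\eta-(1+\delta)\del_X\eta\wedge\dbar_X\eta\right)\otimes\mathrm{Id}_V>_{\mathrm{Nak}}0$$ then yields the required lower bound on the pertinent twisted Laplacian, hence the correct estimate for the vertical term; combined with $\Xi_{\delta,\eta}(h)>_{\mathrm{Griff}}0$, which makes the first integral strictly positive for $\xi\neq 0$, and with the non-negativity of the horizontal term, one gets $\left\langle\Theta^{E_h^{(\nu)}}(\xi),\xi\right\rangle_{t_0}>0$, i.e. Nakano positivity of $E_h$ over each $X_\nu$. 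The semipositive statement follows from the same identity with weak inequalities throughout: either $\Xi_{\delta,\eta}(h)\geq_{\mathrm{Griff}}0$ alone (the vertical and horizontal terms remaining compensated) or the fibrewise condition $\geq_{\mathrm{Nak}}0$ alone (the first integral then being automatically $\geq 0$) is enough.

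The remaining, and most delicate, step is the passage $\nu\to\infty$. Restriction embeds $\HH\!\left(X,h^{[t]}\right)$ into $\HH\!\left(X_\nu,h^{[t]}\right)$ with fibre norms $\norm{\cdot}_{X_\nu}$ increasing to $\norm{\cdot}_X$, so a fixed holomorphic section of $E_h$ over $U$ restricts to a section over every $X_\nu$; the difficulty is that Chern curvature does not automatically pass to limits of Hermitian metrics. The way around is to exploit monotonicity — the reproducing kernels on $X_\nu$ decrease to that on $X$, and the $\dbar_X$-problems on $X_\nu$ admit solutions with $L^2$-bounds \emph{uniform in $\nu$}, since the twisting data $\eta,\delta$ and the positivity margins in the hypotheses are independent of $\nu$ — so that one may extract weak limits of the minimal solutions and pass to the limit in the curvature identity using monotone/dominated convergence for the $X_\nu$-integrals and lower semicontinuity of positivity, concluding that the strict (respectively weak) Nakano positivity descends to $\left(E_h,(\cdot,\cdot)_{h^{[t]}}\right)$ itself. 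Verifying rigorously that no positivity is lost in the weak limit of the vertical terms is the main technical obstacle of the proof.
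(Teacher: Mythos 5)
You should first note that this paper does not prove Theorem \ref{thm-nakano-positivity-smooth-bounded-stein} at all: it is quoted verbatim as Theorem A of the author's earlier work \citep{ainasse2021twisted} and used here only as a black box (it is the input to the exhaustion arguments proving Theorems \ref{griffiths-positivity-general-stein} and \ref{nakano-positivity-general-stein}). So there is no in-paper proof against which your proposal can be checked; any comparison has to be with the expected content of the cited reference.

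On its own terms, your strategy is the natural and almost certainly correct one: Berndtsson's second-variation computation of the $L^2$-metric on the Bergman bundle over a smoothly bounded strictly pseudoconvex fibre, with the vertical term estimated by a twisted (Donnelly--Fefferman/Berndtsson--Charpentier type) Bochner--Kodaira inequality in which $2\ddbar_X\eta-(1+\delta)\del_X\eta\wedge\dbar_X\eta$ is exactly the curvature contribution of the auxiliary weight $e^{-\frac{1+\delta}{2}\eta}$ (the paper itself records the identity $2\del_X\dbar_X\eta-(1+\delta)\del_X\eta\wedge\dbar_X\eta=\frac{4e^{\frac{1+\delta}{2}\eta}}{1+\delta}\del_X\dbar_X\bigl(-e^{-\frac{1+\delta}{2}\eta}\bigr)$, which corroborates your reading of where the hypothesis comes from), followed by an exhaustion of the general relatively compact pseudoconvex $X$. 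However, as written this is a proof plan rather than a proof. The curvature identity is only given ``schematically,'' the splitting of $\bigl\langle\Xi_{\delta,\eta}(h)\xi,\xi\bigr\rangle$ into the block form that makes the Griffiths hypothesis on $\Xi_{\delta,\eta}(h)$ usable against the mixed $\del_Uh$-terms is not carried out, and you yourself flag the passage $\nu\to\infty$ in the vertical term as ``the main technical obstacle'' without resolving it. In particular, you never explain how the Cauchy--Schwarz absorption that trades the mixed block $\dbar_X\bigl(h^{-1}\del_Uh\bigr)$ against the fibrewise block is organized so that precisely the factor $\frac{\delta}{1+\delta}$ appearing in $\Theta_\delta(h)$ survives; this bookkeeping is the actual content of the twisted theorem and cannot be waved through. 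To turn the proposal into a proof you would need to write out the curvature identity explicitly, state and prove the twisted a priori estimate you invoke, and supply the uniform-in-$\nu$ bounds and the semicontinuity argument for the limit.
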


We recall that given a smooth Hermitian metric $h$ for $V \rightarrow \overbar{X}$, $\Theta_{\delta}(h)$ is locally defined as
\begin{align*}
&\sum_{1 \leq j,k \leq m}\dfrac{\del}{\del \bar{t}_k}\left(h^{-1}\dfrac{\del h}{\del t_j}\right) d\bar{t}_k \wedge dt_j + \sum_{\substack{1 \leq j \leq m \\ 1 \leq \mu \leq n}}\dfrac{\del}{\del\bar{z}_\mu}\left(h^{-1}\dfrac{\del h}{\del t_j}\right) d\bar{z}_\mu \wedge dt_j\\
&\, \, \, \, \, \, + \sum_{\substack{1 \leq k \leq m \\ 1 \leq \nu \leq n}}\dfrac{\del}{\del\bar{t}_k}\left(h^{-1}\dfrac{\del h}{\del z_{\nu}}\right) d\bar{t}_k \wedge dz_\nu + \dfrac{\delta}{1+\delta}\sum_{1 \leq \nu, \mu \leq n}\dfrac{\del}{\del\bar{z}_\mu}\left(h^{-1}\dfrac{\del h}{\del z_{\nu}}\right)d\bar{z}_{\mu} \wedge dz_{\nu},
\end{align*}
where $\delta > 0$. We can also express $\Theta_{\delta}(h)$ as
$$\Theta_{\delta}(h) = \Theta(h)-\dfrac{1}{1+\delta}\pi^{*}_X\Theta\left(h^{[t]}\right).$$
As a block matrix split with respect to the product structure $U \times X$ for our holomorphic trivial family, $\Theta_{\delta}(h)$ has the form
$$\Theta_{\delta}(h) = \begin{pmatrix} \dbar_U\left(h^{-1}\del_U h\right) & \dbar_X\left(h^{-1}\del_U h\right) \\ \dbar_U\left(h^{-1}\del_X h\right) & \dfrac{\delta}{1+\delta}\dbar_X\left(h^{-1}\del_X h\right)\end{pmatrix}.$$

Let $\eta$ be a smooth function on $Y$ and define the \textit{twisted} curvature operator $\Xi_{\delta,\eta}$ by
$$\Xi_{\delta,\eta}(h) := \Theta_{\delta}(h) + \dfrac{\delta}{1+\delta}\pi^{*}_X\left(\left(\ricci(g) + 2\ddbar_X\eta-(1+\delta)\del_X\eta\wedge\dbar_X\eta\right)\otimes \mathrm{Id}_V\right).$$

The operator $\Xi_{\delta,\eta}$ can also be represented as the block matrix
$$\Xi_{\delta,\eta}(h) = \begin{pmatrix}
\dbar_U\left(h^{-1}\del_U h\right) & \dbar_X\left(h^{-1}\del_U h\right)\\
\dbar_U\left(h^{-1}\del_X h\right) & \dfrac{\delta}{1+\delta}\left(\dbar_X\left(h^{-1}\del_X h\right) + \ricci(g) + 2\del_X\dbar_X\eta-(1+\delta)\del_X\eta\wedge\dbar_X\eta\right)
\end{pmatrix}.$$
Noting that
$$2\del_X\dbar_X\eta-(1+\delta)\del_X\eta\wedge\dbar_X\eta = \dfrac{4e^{\frac{1+\delta}{2}\eta}}{1+\delta}\del_X\dbar_X\left(-e^{-\frac{1+\delta}{2}\eta}\right),$$ we may also rewrite $\Xi_{\delta,\eta}(h)$ as
$$\Xi_{\delta,\eta}(h) = \begin{pmatrix}
\dbar_U\left(h^{-1}\del_U h\right) & \dbar_X\left(h^{-1}\del_U h\right)\\
\dbar_U\left(h^{-1}\del_X h\right) & \dfrac{\delta}{1+\delta}\left(\dbar_X\left(h^{-1}\del_X h\right) + \ricci(g) + \dfrac{4e^{\frac{1+\delta}{2}\eta}}{1+\delta}\del_X\dbar_X\left(-e^{-\frac{1+\delta}{2}\eta}\right)\right)
\end{pmatrix}.$$

Note that $\Xi_{\delta,\eta}(h) \geq_{\mathrm{Griff}} 0$ if and only if $\Xi_{\delta,\eta}(h) \geq 0$ as a block matrix.\\

In this article, we are interested in the case when $X$ is not necessarily a relatively compact submanifold of an ambient Stein manifold. Using our theorem \citep{ainasse2021twisted} in combination with exhaustion arguments, we will in fact prove more general curvature positivity results in the case where $X$ is a possibly unbounded Stein manifold. In the sequel, we will simply refer to such a manifold as an unbounded Stein manifold In this situation, $E_h$ may no longer have the structure of a vector bundle as the fibers may fail to be isometric, resulting in the absence of local triviality. In other words, $E_h$ is simply a family of Hilbert spaces indexed by $t$ or in other words, a \textit{field of Hilbert spaces} (see \citep[Definition 2.2.1]{Lempert-Szoke-2014}).
Using alternative characterizations of Griffiths (semi)positivity and Nakano (semi)positivity, we prove the following result.

\begin{thmx}\label{nakano-positivity-general-stein}
Let $(X,g)$ be any Stein Kähler manifold, let $U$ be a domain in $\C^m$, and let $V \rightarrow X$ be a holomorphic vector bundle. Let $\left\{h^{[t]}\right\}_{t \in U}$ be a family of smooth Hermitian metrics for $V \rightarrow X$, and let $\left(E_h,\left(\cdot,\cdot\right)_{h^{[t]}}\right)$ be the holomorphic Hermitian field of Hilbert spaces whose fiber at $t$ is $\mathcal{H}^2_t := \mathcal{H}^2\left(X,h^{[t]}\right)$. Let $\delta > 0$ and $\eta$ be a smooth function on $X$. If $\Xi_{\delta,\eta}(h) >_{\mathrm{Griff}} 0$ and 
$$\dbar_X\left(\left(h^{[t]}\right)^{-1}\del_X h^{[t]}\right) + \left(\ricci(g) + 2\del_X\dbar_X\eta-(1+\delta)\del_X\eta\wedge\dbar_X\eta\right) \otimes \mathrm{Id}_V >_{\mathrm{Nak}} 0,$$
for each $t \in U$, then the holomorphic Hermitian bundle $\left(E_h,\left(\cdot,\cdot\right)_{h^{[t]}}\right)$ is Nakano positive in the sense of definition \ref{nakano-positivity-general-definition}. Moreover, if either $\Xi_{\delta,\eta}(h) \geq_{\mathrm{Griff}} 0$ or
$$\dbar_X\left(\left(h^{[t]}\right)^{-1}\del_X h^{[t]}\right) + \left(\ricci(g) + 2\del_X\dbar_X\eta-(1+\delta)\del_X\eta\wedge\dbar_X\eta\right) \otimes \mathrm{Id}_V \geq_{\mathrm{Nak}} 0,$$
then $\left(E_h,\left(\cdot,\cdot\right)_{h^{[t]}}\right)$ is Nakano semipositive in the sense of definition \ref{nakano-positivity-general-definition}.
\end{thmx}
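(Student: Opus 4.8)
The plan is to derive Theorem \ref{nakano-positivity-general-stein} from Theorem \ref{thm-nakano-positivity-smooth-bounded-stein} by exhausting $X$ with relatively compact pseudoconvex subdomains and transporting positivity to the limit through the alternative characterizations of Griffiths and Nakano (semi)positivity of fields of Hilbert spaces. \textbf{Step 1 (exhaustion).} Since $(X,g)$ is Stein, fix a smooth strictly plurisubharmonic exhaustion $\rho\colon X\to\R$ and a sequence $c_1<c_2<\cdots\to\sup_X\rho$ of regular values of $\rho$, and set $X_j:=\{\rho<c_j\}$. Each $X_j$ is a relatively compact (strictly) pseudoconvex subdomain of the \emph{ambient} Stein Kähler manifold $(X,g)$ itself, with $X_j\Subset X_{j+1}$ and $\bigcup_j X_j=X$, and $V$ together with $\left\{h^{[t]}\right\}_{t\in U}$ is defined on the neighbourhood $X$ of $\overbar{X_j}$. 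The two curvature hypotheses of Theorem \ref{nakano-positivity-general-stein} are pointwise conditions on $U\times X$ (respectively, for each fixed $t\in U$, on $X$), so they hold verbatim over $U\times X_j$ (respectively on $X_j$). Hence Theorem \ref{thm-nakano-positivity-smooth-bounded-stein} applies to each $X_j$: the holomorphic Hermitian Hilbert bundle $\left(E_h^{(j)},(\cdot,\cdot)_{h^{[t]}}\right)$ over $U$ with fibre $\mathcal{H}^2\!\left(X_j,h^{[t]}\right)$ is Nakano positive (resp.\ Nakano semipositive), and in particular Griffiths positive (resp.\ Griffiths semipositive).

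\textbf{Step 2 (comparison of the fibres and the limit).} For every $t\in U$ and $j$, restriction of sections gives an injective, norm-decreasing inclusion $\mathcal{H}^2\!\left(X,h^{[t]}\right)\hookrightarrow\mathcal{H}^2\!\left(X_j,h^{[t]}\right)$; thus any holomorphic section $u$ of $E_h$ over an open $W\subseteq U$ — represented by a holomorphic section of $\pi_X^{*}V\to W\times X$ lying in $\mathcal{H}^2\!\left(X,h^{[t]}\right)$ on each slice — restricts to a holomorphic section $u^{(j)}$ of $E_h^{(j)}$, and by monotone convergence
$$\norm{u(t)}_{\mathcal{H}^2(X,h^{[t]})}^2=\int_X\abs{u(t)}_{h^{[t]}}^2\,dV_g=\sup_j\int_{X_j}\abs{u(t)}_{h^{[t]}}^2\,dV_g=\sup_j\norm{u^{(j)}(t)}_{\mathcal{H}^2(X_j,h^{[t]})}^2 .$$
Dually, on the fibres of $E_h^{*}$ the norm $\norm{\mathrm{ev}_{z,\xi}}_{\mathcal{H}^2(X_j,h^{[t]})^{*}}$ of an evaluation functional — equivalently $\left\langle K_{X_j}\!\left(z,z;h^{[t]}\right)\xi,\xi\right\rangle$, the diagonal Bergman kernel paired with $\xi$ — is \emph{decreasing} in $j$ and converges as $j\to\infty$ to $\norm{\mathrm{ev}_{z,\xi}}_{\mathcal{H}^2(X,h^{[t]})^{*}}$. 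These monotone relations feed directly into the alternative characterizations: Griffiths semipositivity of $E_h$ amounts to plurisubharmonicity in $t$ of $\log\norm{\mathrm{ev}_{z,\xi}}_{\mathcal{H}^2(X,h^{[t]})^{*}}$ for all $(z,\xi)$, which is a decreasing limit of plurisubharmonic functions (by Step 1), hence plurisubharmonic; and Nakano semipositivity of $E_h$ is an optimal $L^2$-estimate for $\dbar_t$ on $E_h$-valued forms, which follows from the uniform-in-$j$ estimates for the $E_h^{(j)}$ of Step 1 by extracting a weak limit of the fibrewise $\dbar_t$-solutions and invoking the displayed monotone convergence of the norms. This establishes that $\left(E_h,(\cdot,\cdot)_{h^{[t]}}\right)$ is Griffiths, respectively Nakano, semipositive in the sense of Definition \ref{nakano-positivity-general-definition}, settling the non-strict conclusions of Theorem \ref{nakano-positivity-general-stein}.

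\textbf{Step 3 (the strict case) and the main obstacle.} Under the strict hypotheses, $\Xi_{\delta,\eta}(h)$ and the Nakano term of the hypothesis are uniformly strictly positive on every compact subset of $U\times X$; tracking this quantitatively in Theorem \ref{thm-nakano-positivity-smooth-bounded-stein} yields, for each $U'\Subset U$ and each fixed $j_0$, a lower bound $\left\langle\Theta_{E_h^{(j)}}(t)u,u\right\rangle\geq c\!\left(U',j_0\right)\norm{u}_{\mathcal{H}^2(X_{j_0},h^{[t]})}^2$ valid for all $j\geq j_0$ and $t\in U'$. Passing to the limit in $j$ as in Step 2 and then letting $j_0\to\infty$ gives $\left\langle\Theta_{E_h}(t)u,u\right\rangle>0$ for every $u\neq0$ — using unique continuation, since a holomorphic section of $V$ vanishing on some $X_{j_0}$ vanishes on $X$ — which is Nakano positivity of $E_h$ in the sense of Definition \ref{nakano-positivity-general-definition}. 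The real difficulty, and essentially the only one, is this passage to the limit: one must verify that the chosen characterizations are genuinely stable under the exhaustion $X_j\nearrow X$ — which is precisely why one works with the \emph{dual}/Bergman-kernel formulation for Griffiths positivity and with the $L^2$-estimate formulation for Nakano positivity, the two formulations that vary monotonically with $X_j$ — and, in the strict case, that positivity does not degenerate in the limit, for which the uniform-on-compacts control coming from the strict hypotheses is indispensable.
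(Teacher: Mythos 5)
Your high-level strategy — exhaust $X$ by relatively compact pseudoconvex $X_j\Subset X_{j+1}$, apply Theorem~\ref{thm-nakano-positivity-smooth-bounded-stein} on each $X_j$, pass to the limit — is the right outline and is what the paper does. But as written there is a genuine gap in the Nakano part, and you even point at it yourself ("the real difficulty, and essentially the only one, is this passage to the limit") without actually closing it.

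The issue is that Definition~\ref{nakano-positivity-general-definition} is formulated in terms of the $(m-1,m-1)$-form $T_u$ and the estimate $\del_U\dbar_U(-T_u)\geq c_0\sum_k\norm{u_k^{[t]}}^2_{h^{[t]}}\,dV(t)$ for $m$-tuples of holomorphic sections $u_k$ of $E_h$ normalized by $\nabla^{E_h,(1,0)}_{t_k}u_k^{[t]}=0$ at $t$. Your Step 2 swaps this out for an "optimal $L^2$-estimate for $\dbar_t$" characterization and proposes to extract weak limits of fibrewise $\dbar_t$-solutions. The paper never establishes that such an $L^2$-estimate characterization is equivalent to Definition~\ref{nakano-positivity-general-definition} for non-locally-trivial fields of Hilbert spaces, and using it requires proving that equivalence — which would be a separate project. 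More importantly, the passage from $E^{(j)}_h$ to $E_h$ is not at all automatic even for the $T_u$-formulation: the Chern connection $\nabla^{E_h,(1,0)}_{t_k}=\mathcal{P}_t\circ\nabla^{F_h,(1,0)}_{t_k}$ and the curvature $\Theta^{E_h}_{t_k\bar t_\ell}$ involve the Bergman projection $\mathcal{P}_t$ onto $\HH_t=\HH(X,h^{[t]})$, whereas those of $E_{(j+1),h}$ use the \emph{different} projection $\mathcal{P}^{(j+1)}_t$ onto $\HH(X_{j+1},h^{[t]})$. In particular, a holomorphic section $u_k$ of $E_h$ with $\nabla^{E_h,(1,0)}_{t_k}u_k^{[t]}=0$ at $t$ does \emph{not}, upon restriction, satisfy $\nabla^{E_{(j+1),h},(1,0)}_{t_k}u_k^{[t]}=0$ at $t$, so you cannot simply feed it into Theorem~\ref{thm-nakano-positivity-smooth-bounded-stein} applied on $X_{j+1}$ and claim the resulting estimate.

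The paper's actual proof is precisely the machinery needed to bridge this gap: it introduces cutoff functions $\chi_j$ supported in $X_{j+1}$ and equal to $1$ on $\overbar{X}_j$, computes $\del_U\dbar_U(-T_u)$ by splitting $u_k = \chi_j^2 u_k + (1-\chi_j^2)u_k$, and then carries out a sequence of Cauchy--Schwarz estimates showing that (i) all tail contributions from $X\setminus\overbar{X}_j$ vanish as $j\to\infty$, (ii) the difference between the two Bergman projections $\mathcal{P}^\perp_t$ and $\mathcal{P}^{(j+1),\perp}_t$ applied to $\nabla^{F_h}_{t_k}(\chi_j u_k)$ contributes only error terms that tend to zero (using crucially the normalization $\nabla^{E_h,(1,0)}_{t_k}u^{[t]}_k=0$ at $t$), and (iii) what remains is exactly $\sum_{k,\ell}\left(\Theta^{E_{(j+1),h}}_{t_k\bar t_\ell}u_k^{[t]},u_\ell^{[t]}\right)_{[j+1],h^{[t]}}\,dV(t)+o(j)$. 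Only after this reduction can one invoke Theorem~\ref{thm-nakano-positivity-smooth-bounded-stein} on $X_{j+1}$ and then use $\norm{u_k^{[t]}}^2_{[j+1],h^{[t]}}\to\norm{u_k^{[t]}}^2_{h^{[t]}}$ to conclude. Your proposal also tacitly merges the Griffiths argument (decreasing limits of psh dual norms, which the paper treats separately in Theorem~\ref{griffiths-positivity-general-stein}) with the Nakano one; the paper keeps them distinct because the limit mechanisms are different. So: right skeleton, but the core analytic content — the cutoff-based $T_u$ computation reconciling the two Bergman projections — is exactly what is missing.
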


Prior to proving Theorem \ref{nakano-positivity-general-stein}, we also show that the holomorphic Hermitian field of Hilbert spaces $\left(E_h,\left(\cdot,\cdot\right)_{h^{[t]}}\right)$ is Griffiths positive in a general sense by using a different approach than the one used in our proof of Theorem \ref{nakano-positivity-general-stein}.

\begin{thmx}\label{griffiths-positivity-general-stein}
Let $(X,g)$ be any Stein Kähler manifold, let $U$ be a domain in $\C^m$, and let $V \rightarrow X$ be a holomorphic vector bundle. Let $\left\{h^{[t]}\right\}_{t \in U}$ be a family of smooth Hermitian metrics for $V \rightarrow X$ and let $\left(E_h,\left(\cdot,\cdot\right)_{h^{[t]}}\right)$ be the holomorphic Hermitian field of Hilbert spaces whose fiber at $t$ is $\mathcal{H}^2_t := \mathcal{H}^2\left(X,h^{[t]}\right)$. Let $\delta > 0$ and $\eta$ be a smooth function on $X$. If $\Xi_{\delta,\eta}(h) >_{\mathrm{Griff}} 0$ and $$\dbar_X\left(\left(h^{[t]}\right)^{-1}\del_X h^{[t]}\right) + \left(\ricci(g) + 2\del_X\dbar_X\eta-(1+\delta)\del_X\eta\wedge\dbar_X\eta\right) \otimes \mathrm{Id}_V >_{\mathrm{Nak}} 0,$$ 
for each $t \in U$, then the holomorphic Hermitian bundle $\left(E_h,\left(\cdot,\cdot\right)_{h^{[t]}}\right)$ is Griffiths positive in the sense of definition \ref{griffiths-positivity-definition-general}. Moreover, if
either $\Xi_{\delta,\eta}(h) \geq_{\mathrm{Griff}} 0$ or $$\dbar_X\left(\left(h^{[t]}\right)^{-1}\del_X h^{[t]}\right) + \left(\ricci(g) + 2\del_X\dbar_X\eta-(1+\delta)\del_X\eta\wedge\dbar_X\eta\right) \otimes \mathrm{Id}_V \geq_{\mathrm{Nak}} 0,$$
for each $t \in U$, then $\left(E_h,\left(\cdot,\cdot\right)_{h^{[t]}}\right)$ is Griffiths semipositive in the sense of definition \ref{griffiths-positivity-definition-general}.
\end{thmx}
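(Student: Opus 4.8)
The plan is to deduce the statement over the possibly unbounded Stein manifold $X$ from the relatively compact case already established in Theorem \ref{thm-nakano-positivity-smooth-bounded-stein}, by exhausting $X$ and passing to the limit. First I would fix a smooth strictly plurisubharmonic exhaustion function $\varphi$ on $X$ (which exists since $X$ is Stein), pick by Sard's theorem an increasing sequence of regular values $c_1 < c_2 < \cdots \to \infty$, and set $X_j := \{\varphi < c_j\}$, so that each $X_j$ is a relatively compact, smoothly bounded, strictly pseudoconvex subdomain of the ambient Stein Kähler manifold $(X,g)$, with $X_j \Subset X_{j+1}$ and $\bigcup_j X_j = X$. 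Since $\overline{X_j}$ is compact in $X$, the data $V$, $\{h^{[t]}\}$ and $\eta$ restrict to $\overline{X_j}$ with the regularity required by Theorem \ref{thm-nakano-positivity-smooth-bounded-stein}, and the hypotheses $\Xi_{\delta,\eta}(h) >_{\mathrm{Griff}} 0$ (resp. $\geq_{\mathrm{Griff}} 0$) together with the fibrewise Nakano inequalities restrict to the corresponding hypotheses on $U \times \overline{X_j}$. Theorem \ref{thm-nakano-positivity-smooth-bounded-stein} then says that the holomorphic Hermitian Hilbert bundle $E_h^{(j)} \to U$ with fibre $\mathcal{H}^2(X_j, h^{[t]})$ is Nakano positive (resp. semipositive), hence in particular Griffiths positive (resp. semipositive).

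Next I would use the characterization of Griffiths (semi)positivity for fields of Hilbert spaces of Bergman type on which Definition \ref{griffiths-positivity-definition-general} rests: such a field is Griffiths (semi)positive precisely when, for every $x \in X$ and every $\xi \in V_x^{*}$, the function $t \mapsto \log \|\mathrm{ev}_{x,\xi}\|^2_{(\mathcal{H}^2_t)^{*}}$ --- the logarithm of the squared dual norm of the evaluation functional $f \mapsto \langle\xi, f(x)\rangle$, equivalently the logarithm of an associated diagonal Bergman kernel $B^{[t]}(x,\xi)$ --- is (strictly) plurisubharmonic in $t$. Applied to each $E_h^{(j)}$, this gives that $t \mapsto \log B_j^{[t]}(x,\xi)$ is (strictly) plurisubharmonic for every $x \in X_j$ and $\xi \in V_x^{*}$. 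The restriction maps $\mathcal{H}^2(X_{j+1}, h^{[t]}) \to \mathcal{H}^2(X_j, h^{[t]})$ are norm non-increasing, so the image of the unit ball of the former lies in the unit ball of the latter and the sequence $B_j^{[t]}(x,\xi)$ is non-increasing in $j$; a normal-families argument (the analogue of Ramadanov's theorem for these weighted, bundle-valued Bergman spaces) shows $B_j^{[t]}(x,\xi) \downarrow B^{[t]}(x,\xi)$, the same quantity for $\mathcal{H}^2(X, h^{[t]})$. Since a decreasing limit of plurisubharmonic functions is plurisubharmonic (or $\equiv -\infty$, a case of no interest), $t \mapsto \log B^{[t]}(x,\xi)$ is plurisubharmonic for all $x,\xi$, which is Griffiths semipositivity of $(E_h, (\cdot,\cdot)_{h^{[t]}})$ in the sense of Definition \ref{griffiths-positivity-definition-general}; this settles the semipositive assertions, under either of the two weaker hypotheses.

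The main obstacle is the strict statement, since a decreasing limit of strictly plurisubharmonic functions need not be strictly plurisubharmonic. To get around this I would go back to the proof of Theorem \ref{thm-nakano-positivity-smooth-bounded-stein} and extract its quantitative content: a Berndtsson-type lower bound for $\partial\bar\partial_t \log B_j^{[t]}(x,\xi)$ by the integral over $X_j$ of $\Xi_{\delta,\eta}(h)$ paired with the $X_j$-extremal section attached to $(x,\xi)$. The same normal-families argument shows these extremal sections converge, under the exhaustion, to the extremal section on $X$, which is not identically zero; hence the lower bounds converge to the integral over $X$ of the strictly positive form $\Xi_{\delta,\eta}(h)$ against a nonzero section, a strictly positive quantity. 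Consequently, near any prescribed point of $U$, the functions $\log B_j^{[t]}(x,\xi)$ satisfy for large $j$ a strict plurisubharmonicity estimate with a $j$-independent lower bound, which is then inherited by the decreasing limit $\log B^{[t]}(x,\xi)$; this yields strict Griffiths positivity of $(E_h, (\cdot,\cdot)_{h^{[t]}})$. I expect the delicate points to be the rigorous justification of the convergence of extremal sections and of the curvature lower bounds under the exhaustion, together with pinning down the precise quantitative inequality inside the proof of Theorem \ref{thm-nakano-positivity-smooth-bounded-stein}; the rest of the argument is soft.
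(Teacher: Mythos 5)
Your reduction of Griffiths (semi)positivity to $\log$-plurisubharmonicity of the diagonal Bergman kernel is where your argument and the paper's part ways, and it is a genuine gap. Definition \ref{griffiths-positivity-definition-general} requires $\log$-plurisubharmonicity of $t \mapsto \norm{\xi}^2_{*,h^{[t]}}$ for \emph{every} holomorphic section $\xi$ of $E^{*}_h$; the evaluation functionals $\sigma \otimes \mathrm{ev}_x$ are just one distinguished family of such sections. You assert (``such a field is Griffiths (semi)positive precisely when \dots'') that the evaluation-functional case is equivalent to the full definition, but this equivalence is neither proved by you nor claimed by the paper, and for fields of Hilbert spaces over unbounded Stein manifolds it is not automatic. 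Consequently your exhaustion-plus-Ramadanov argument, even carried out rigorously, only establishes $\log$-plurisubharmonicity of the Bergman kernel --- which is a corollary of Theorem \ref{griffiths-positivity-general-stein} (it is essentially Theorem \ref{bergman-kernel-trivial-theorem}) but not the theorem itself. The paper instead works directly with an arbitrary holomorphic section $\xi$ of $E^{*}_h$: using cutoff functions $\hat{\chi}_j$ and the global Bergman projection $\mathcal{P}_{U\times X}$ it constructs holomorphic sections $\xi^{(j)}$ of $E^{*}_{(j+1),h}$ over the exhausting subdomains $X_{j+1}$, shows that the dual norms $\norm{\xi^{(j)}}^2_{*,h^{[t]},X_{j+1}}$ decrease to $\norm{\xi}^2_{*,h^{[t]},X}$, and only then invokes Theorem \ref{thm-nakano-positivity-smooth-bounded-stein} on each bounded $X_{j+1}$.

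Your observation that a decreasing limit of strictly plurisubharmonic functions need not be strictly plurisubharmonic is a legitimate concern, one which the paper's own proof also passes over in silence. But as sketched your quantitative patch (``extract the quantitative content of Theorem \ref{thm-nakano-positivity-smooth-bounded-stein} and pass the lower bound through the exhaustion'') is too informal to count as an argument, and in any case it would only handle the evaluation-functional subfamily, not a general holomorphic section $\xi$ of $E^{*}_h$ as the definition demands.
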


As Berndtsson's work has been the main inspiration for our work, in the spirit of generalizing some of the results of Berndtsson's \textit{complex Brunn-Minkowski theory} (\citep{berndtsson2018complex}), we use our general curvature positivity theorems to establish $\log$-plurisubharmonic variation results similar to those of Berndtsson. Namely, we prove $\log$-plurisubharmonic variation results for Bergman kernels and families of compactly supported measures for trivial families of unbounded Stein manifolds. As in the case of Berndtsson's complex Brunn-Minkowski theory, these results really only require the Griffiths positivity of the holomorphic Hermitian field of Hilbert spaces $\left(E_h,\left(\cdot,\cdot\right)_{h^{[t]}}\right)$ and follow almost immediately.

\begin{thmx}\label{bergman-kernel-trivial-theorem}
Let $(X,g)$ be any Stein Kähler manifold, let $U \subset \C^m$ be a domain, and let $V \rightarrow X$ be a holomorphic vector bundle. Let $\left\{h^{[t]}\right\}_{t\in U}$ be a family of smooth Hermitian metrics for $V \rightarrow X$. Let $\delta > 0$ and $\eta$ be smooth function on $X$. Denote by $K_t$ the Bergman kernel for the projection $L^2\left(X,h^{[t]}\right) \rightarrow \HH\left(X,h^{[t]}\right)$. 

If $\Xi_{\delta,\eta}(h) >_{\mathrm{Griff}} 0$ and $$\dbar_X\left(\left(h^{[t]}\right)^{-1}\del_X h^{[t]}\right) + \left(\ricci(g) + 2\del_X\dbar_X\eta-(1+\delta)\del_X\eta\wedge\dbar_X\eta\right) \otimes \mathrm{Id}_V >_{\mathrm{Nak}} 0,$$ for each $t \in U$, 
then the family of possibly singular Hermitian metrics for the pullback bundle $\pi^{*}_{X} V \rightarrow U \times X$ defined by $\left\{K^{-1}_t\right\}_{t \in U}$ on the fiber $\left(\pi^{*}_{X} V\right)_{(t,z)} \cong V_z$ is positively curved. Otherwise, if either $\Xi_{\delta,\eta}(h) \geq_{\mathrm{Griff}} 0$ or $$\dbar_X\left(\left(h^{[t]}\right)^{-1}\del_X h^{[t]}\right) + \left(\ricci(g) + 2\del_X\dbar_X\eta-(1+\delta)\del_X\eta\wedge\dbar_X\eta\right) \otimes \mathrm{Id}_V \geq_{\mathrm{Nak}} 0,$$ 
for each $t \in U$, then the family of possibly singular Hermitian metrics for $\pi^{*}_{X} V \rightarrow U \times X$ defined by $\left\{K^{-1}_t\right\}_{t \in U}$ on the fiber $\left(\pi^{*}_{X} V\right)_{(t,z)} \cong V_z$ is semipositively curved.
\end{thmx}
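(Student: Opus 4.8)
The plan is to deduce Theorem~\ref{bergman-kernel-trivial-theorem} directly from the Griffiths positivity of $\left(E_h,\left(\cdot,\cdot\right)_{h^{[t]}}\right)$ established in Theorem~\ref{griffiths-positivity-general-stein}, via the standard reduction through the evaluation morphism and duality. Write $\pi_U : U\times X\to U$ for the canonical projection and let $\pi_U^{*}E_h$ be the pulled-back field of Hilbert spaces over $U\times X$, with fiber $\mathcal{H}^2_t$ at $(t,z)$. There is a natural holomorphic evaluation morphism $\mathrm{ev} : \pi_U^{*}E_h\to\pi_X^{*}V$ over $U\times X$, given fiberwise by $\mathcal{H}^2_t\ni f\mapsto f(z)\in V_z$; because $X$ is Stein, $\mathrm{ev}$ is surjective off an analytic subset of $U\times X$. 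Its associated dual morphism $\mathrm{ev}^{*}:\pi_X^{*}V^{*}\to\left(\pi_U^{*}E_h\right)^{*}$ sends $u\in V_z^{*}$ to the bounded functional $\mathrm{ev}_{z,u}:\mathcal{H}^2_t\to\C$, $f\mapsto\langle f(z),u\rangle$.

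The only computation needed is the classical identity for (bundle-valued) Bergman kernels: for every $t\in U$, $z\in X$, and $u\in V_z^{*}$,
\[
\norm{\mathrm{ev}_{z,u}}^{2}_{\left(\mathcal{H}^2_t\right)^{*}} \;=\; \abs{u}^{2}_{K_t} ,
\]
where $\abs{\,\cdot\,}_{K_t}$ denotes the norm on $V_z^{*}$ dual to the positive Hermitian form $K_t(z,z)^{-1}$ on $V_z$; indeed the Riesz representative of $\mathrm{ev}_{z,u}$ is the section $w\mapsto K_t(w,z)^{*}u$, whose squared $L^{2}$-norm is $\langle K_t(z,z)u,u\rangle=\abs{u}^{2}_{K_t}$. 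Thus $\mathrm{ev}^{*}$ realizes $\left(\pi_X^{*}V^{*},\left\{K_t\right\}\right)$ as a holomorphic sub-object of the dual field $\left(\left(\pi_U^{*}E_h\right)^{*},\left\{\left(\cdot,\cdot\right)_{h^{[t]}}^{*}\right\}\right)$ over the locus where $K_t$ is non-degenerate; equivalently, dually, $\left(\pi_X^{*}V,\left\{K_t^{-1}\right\}\right)$ is the quotient of $\left(\pi_U^{*}E_h,\left\{\left(\cdot,\cdot\right)_{h^{[t]}}\right\}\right)$ by $\ker\mathrm{ev}$ with the quotient metric $\abs{v}^{2}_{K_t^{-1}}=\inf\left\{\norm{f}^{2}_{h^{[t]}}\ :\ f\in\mathcal{H}^2_t,\ f(z)=v\right\}$.

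With these identifications in hand I would conclude as follows. By Theorem~\ref{griffiths-positivity-general-stein}, under the strict (resp.\ the weak) hypotheses the field $\left(E_h,\left(\cdot,\cdot\right)_{h^{[t]}}\right)$ is Griffiths positive (resp.\ semipositive) in the sense of Definition~\ref{griffiths-positivity-definition-general}; this is preserved under pulling back along $\pi_U$, so $\pi_U^{*}E_h$ is Griffiths positive (resp.\ semipositive), and hence its dual field $\left(\pi_U^{*}E_h\right)^{*}$ is negatively (resp.\ seminegatively) curved, meaning in particular that $\log\norm{\xi}^{2}$ is plurisubharmonic for every local holomorphic section $\xi$ of $\left(\pi_U^{*}E_h\right)^{*}$. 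Restricting to the sub-object $\pi_X^{*}V^{*}$: every local holomorphic section $u$ of $\pi_X^{*}V^{*}$ is, through $\mathrm{ev}^{*}$, such a section $\xi=\mathrm{ev}_{\cdot,u}$, and by the displayed identity $\norm{\xi}^{2}=\abs{u}^{2}_{K_t}$, so $(t,z)\mapsto\log\abs{u(t,z)}^{2}_{K_t}$ is plurisubharmonic. Therefore $\left\{K_t\right\}$ is a negatively (resp.\ seminegatively) curved, possibly singular, Hermitian metric on $\pi_X^{*}V^{*}\to U\times X$; that is to say, by definition, its dual $\left\{K_t^{-1}\right\}$ is a positively (resp.\ semipositively) curved, possibly singular, Hermitian metric on $\pi_X^{*}V\to U\times X$, which is the assertion.

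The main points requiring care — and essentially the only real work — will be formal. One has to make precise that pullback of a field of Hilbert spaces, its dual field, and the sub-object coming from $\mathrm{ev}$ are all well defined, and that the generalized notions of Definition~\ref{griffiths-positivity-definition-general} are functorial under them; this is exactly where the characterization of Griffiths (semi)positivity in terms of plurisubharmonicity of $\log\norm{\,\cdot\,}^{2}$ of holomorphic sections of the dual — rather than in terms of curvature currents, which are delicate for the possibly non-locally-trivial $\pi_U^{*}E_h$ and for the singular metric $\left\{K_t^{-1}\right\}$ — is indispensable. One also has to see that nothing is lost along the analytic set where $K_t(z,z)$ degenerates; this will be automatic, since there $\left\{K_t^{-1}\right\}$ is only required to be a singular metric and the plurisubharmonicity of $\log\abs{u}^{2}_{K_t}$ is insensitive to that locus. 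Finally, carrying the strict case through requires no more than invoking Theorem~\ref{griffiths-positivity-general-stein} in its strict form; no positivity input beyond Griffiths positivity enters, and in particular the Nakano positivity of Theorem~\ref{nakano-positivity-general-stein} is not used here.
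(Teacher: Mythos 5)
Your proposal follows the same route as the paper. The paper fixes $z\in X$ and $\sigma\in V_z^{*}$, defines the evaluation functional $\xi^{(z,\sigma)}_t:f\mapsto\langle f(z),\sigma\rangle$ as a holomorphic section of $E_h^{*}$, computes
\[
\norm{\xi^{(z,\sigma)}}^{2}_{*,h^{[t]}}=\sup_{f\in\HH_t\setminus\{0\}}\dfrac{\abs{\langle f(z),\sigma\rangle}^{2}}{\norm{f}^{2}_{h^{[t]}}}=\left\langle K_t(z,z),\sigma\otimes\bar\sigma\right\rangle
\]
via the extremal property of the Bergman kernel, and then invokes the Griffiths positivity of $E_h$ from Theorem~\ref{griffiths-positivity-general-stein}. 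Your evaluation morphism $\mathrm{ev}$, its dual $\mathrm{ev}^{*}$, the identity $\norm{\mathrm{ev}_{z,u}}^{2}=\abs{u}^{2}_{K_t}$, and the quotient-metric description $\abs{v}^{2}_{K_t^{-1}}=\inf\{\norm{f}^{2}_{h^{[t]}}:f\in\HH_t,\ f(z)=v\}$ are a more structural packaging of exactly this computation; the positivity input is the same theorem, and no more.

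One step deserves softening. You assert that Griffiths positivity, including the strict case, is ``preserved under pulling back along $\pi_U$.'' That is not quite right: for the field $\pi_U^{*}E_h$ over $U\times X$ the metric depends on $t$ only, so curvature (and hence any putative strict lower bound) degenerates along the $X$-directions. What survives the pullback is Griffiths semipositivity, together with strict positivity restricted to the $t$-directions; this is in fact all that is needed, since plurisubharmonicity in the $z$-directions comes for free from holomorphic dependence of $K_t(\cdot,\bar z)$ on $z$. The paper sidesteps this by fixing $z$ and $\sigma$ and proving only $\del_U\dbar_U\log\langle K_t(z,z),\sigma\otimes\bar\sigma\rangle>0$, leaving the passage to joint $(t,z)$-plurisubharmonicity implicit, so your more explicit framework is fine provided you replace ``Griffiths positive'' for the pullback by ``Griffiths semipositive, strictly so in the horizontal directions.''
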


\begin{thmx}\label{family-measures-trivial-theorem}
Let $(X,g)$ be any Stein Kähler manifold, let $U \subset \C^m$ be a domain, and let $V \rightarrow X$ be a holomorphic vector bundle. Let $\left\{h^{[t]}\right\}_{t\in U}$ be a family of smooth Hermitian metrics for $V \rightarrow X$. Let $\delta > 0$ and $\eta$ be smooth function on $X$. Let $\left\{\hat{\mu}_t\right\}_{t\in U}$ be a family of compactly supported $V^{*}$-valued complex measures over $X$. For each section $f \in \Gamma(E_h)$, define the measure $\mu^{(f)}_t = \left\langle f^{[t]},\hat{\mu}_t\right\rangle$. Suppose that the section $\xi^{(\mu)}$ of $E^{*}_h$ defined by
$$f^{[t]} \mapsto \left\langle\xi^{(\mu)}_t,f^{[t]}\right\rangle := \mu^{(f)}_t(X) = \int_X \left\langle f^{[t]},\hat{\mu}_t\right\rangle$$
is holomorphic. That is, $U \ni t \mapsto \mu^{(f)}_t(X)$ is holomorphic whenever $f \in \Gamma_{\mathcal{O}}(E_h)$.

If $\Xi_{\delta,\eta}(h) >_{\mathrm{Griff}} 0$ and $$\dbar_X\left(\left(h^{[t]}\right)^{-1}\del_X h^{[t]}\right) + \left(\ricci(g) + 2\del_X\dbar_X\eta-(1+\delta)\del_X\eta\wedge\dbar_X\eta\right) \otimes \mathrm{Id}_V >_{\mathrm{Nak}} 0,$$ for each $t \in U$, then the function
$$U \ni t \mapsto \log\left(\norm{\xi^{(\mu)}}^2_{h^{[t]},*}\right)$$
is strictly plurisubharmonic or identically $-\infty$. Otherwise, if either $\Xi_{\delta,\eta}(h) \geq_{\mathrm{Griff}} 0$ or $$\dbar_X\left(\left(h^{[t]}\right)^{-1}\del_X h^{[t]}\right) + \left(\ricci(g) + 2\del_X\dbar_X\eta-(1+\delta)\del_X\eta\wedge\dbar_X\eta\right) \otimes \mathrm{Id}_V \geq_{\mathrm{Nak}} 0,$$ 
for each $t \in U$, then the function
$$U \ni t \mapsto \log\left(\norm{\xi^{(\mu)}}^2_{h^{[t]},*}\right)$$
is plurisubharmonic or identically $-\infty$.
\end{thmx}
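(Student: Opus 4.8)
The plan is to deduce Theorem~\ref{family-measures-trivial-theorem} directly from Theorem~\ref{griffiths-positivity-general-stein} by dualizing, in exactly the way Berndtsson's log-plurisubharmonic variation results are obtained from Griffiths positivity; as remarked above, only the Griffiths (semi)positivity of $\left(E_h,\left(\cdot,\cdot\right)_{h^{[t]}}\right)$ is needed, so the argument is essentially formal.

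First, under the curvature hypotheses in the statement, Theorem~\ref{griffiths-positivity-general-stein} guarantees that the holomorphic Hermitian field of Hilbert spaces $\left(E_h,\left(\cdot,\cdot\right)_{h^{[t]}}\right)$ is Griffiths positive (resp. semipositive) in the sense of Definition~\ref{griffiths-positivity-definition-general}. I would then pass to the dual field of Hilbert spaces $\left(E^{*}_h,\left(\cdot,\cdot\right)_{h^{[t]},*}\right)$: Griffiths positivity of $E_h$ is equivalent to Griffiths negativity (resp. seminegativity) of $E^{*}_h$ --- for finite-rank bundles this is the classical relation between the curvature of a bundle and that of its dual, and in the present generality it is built into, or an immediate consequence of, the alternative characterization underlying Definition~\ref{griffiths-positivity-definition-general}.

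Next, I would invoke the standard consequence of Griffiths (semi)negativity of a holomorphic Hermitian field of Hilbert spaces $\left(F,\norm{\cdot}\right)$ over $U$: for every holomorphic section $u \in \Gamma_{\mathcal{O}}(F)$, the function $t \mapsto \log\norm{u(t)}^2$ is plurisubharmonic (or identically $-\infty$ when $u$ vanishes identically), and it is strictly plurisubharmonic wherever it is finite when $F$ is strictly Griffiths negative. Applying this with $F = E^{*}_h$ and $u = \xi^{(\mu)}$ --- which is a holomorphic section of $E^{*}_h$ precisely by the hypothesis that $t \mapsto \mu^{(f)}_t(X)$ is holomorphic for each $f \in \Gamma_{\mathcal{O}}(E_h)$ --- immediately yields that $U \ni t \mapsto \log\left(\norm{\xi^{(\mu)}}^2_{h^{[t]},*}\right)$ is strictly plurisubharmonic or identically $-\infty$ under the strict hypotheses, and plurisubharmonic or identically $-\infty$ under the weaker ones, which is the claim. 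Here one also uses that the $\left(\cdot,\cdot\right)_{h^{[t]},*}$-norm of $\xi^{(\mu)}_t$ is finite, being computed by the Riesz representative of the functional $f^{[t]} \mapsto \int_X \left\langle f^{[t]},\hat{\mu}_t\right\rangle$, which exists because $\hat{\mu}_t$ is compactly supported.

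The only genuinely delicate points are the passage to the dual and the ``strict'' dichotomy: one must check that Definition~\ref{griffiths-positivity-definition-general}, being phrased through characterizations valid for honest fields of Hilbert spaces rather than for locally trivial bundles, is compatible with dualization, so that (strict) Griffiths positivity of $E_h$ does translate into the (strict) log-plurisubharmonicity of norms of holomorphic sections of $E^{*}_h$; and that, in the strict case, a non-trivial $\xi^{(\mu)}$ can vanish at worst on a pluripolar set on which the function equals $-\infty$, consistently with the stated dichotomy. These facts are already implicit in the framework set up for Theorems~\ref{nakano-positivity-general-stein} and~\ref{griffiths-positivity-general-stein}, so once they are recorded nothing further is required.
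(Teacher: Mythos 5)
Your proposal is correct and is essentially the paper's own argument: verify that $\xi^{(\mu)}$ is a (bounded, hence genuine) section of $E^{*}_h$, note it is holomorphic by hypothesis, and then cite Theorem~\ref{griffiths-positivity-general-stein}. Two remarks on the presentation. First, the detour you take through ``Griffiths negativity of $E^{*}_h$'' and a general principle about log-PSH of dual-bundle section norms is unnecessary here, and the ``delicate point'' you flag about compatibility of Definition~\ref{griffiths-positivity-definition-general} with dualization is in fact a non-issue: the paper \emph{defines} Griffiths (semi)positivity of the field $E_h$ precisely as the statement that $t \mapsto \log\norm{\xi}^2_{*,h^{[t]}}$ is (strictly) PSH or $\equiv -\infty$ for every holomorphic section $\xi$ of $E^{*}_h$. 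So once $\xi^{(\mu)}$ is known to be a holomorphic section of $E^{*}_h$, the conclusion is literally the definition applied to that one section --- there is nothing to ``translate.'' Second, your boundedness argument is too quick: you say the Riesz representative ``exists because $\hat{\mu}_t$ is compactly supported,'' but Riesz representation presupposes boundedness rather than supplying it. What the paper actually does is expand $\hat{\mu}_t$ in a local frame and estimate
\[
\abs{\left\langle \xi^{(\mu)}_t, f^{[t]}\right\rangle} \lesssim \sup_{K}\abs{f}_{h^{[t]}},
\]
with $K \Supset \supp\hat{\mu}_t$ compact, and then invokes Bergman's inequality (Proposition~\ref{prop-bergman-regularity}) to dominate $\sup_K\abs{f}_{h^{[t]}}$ by $\norm{f}_{h^{[t]}}$. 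That is the step that upgrades ``compactly supported'' to ``bounded functional on $\HH_t$,'' and it should be spelled out rather than absorbed into a citation of Riesz. With that correction your argument matches the paper's.
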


In \citep{berndtsson2017}, Berndtsson shows how his $\log$-plurisubharmonic variation results for product domains can be generalized to domains that are subsets of product domains, but not necessarily product domains themselves. His approach consists of a reduction from the latter situation to the former situation. Although Berndtsson's technique is not quite compatible with our twisted curvature condition, we succeed nonetheless at establishing $\log$-plurisubharmonic variation results for a certain class of non-trivial families of Stein manifolds.

\begin{thmx}\label{bergman-kernel-nontrivial-theorem}
Let $Y$ be an $n$-dimensional Stein manifold. Let $\rho$ be a smooth plurisubharmonic function on $\C^m \times Y$ and let 
$$X = \{\rho(t,z) < 0\} \subset \C^m \times Y.$$ Suppose that for each $t$, the restriction $\rho^{[t]}$ of $\rho$ to 
$$X_t := \{z \in Y : (t,z) \in X\} \subset Y$$
takes values in $[-1,0)$. Let $g$ be a Kähler metric for $Y$ and let us equip $\C^m \times Y$ with the product metric induced by the Euclidean metric on $\C^m$ and the metric $g$ on $Y$. Let $V \rightarrow X$ be a holomorphic vector bundle and let $h$ be a smooth Hermitian metric for $V \rightarrow X$ such that $\dbar_t\left(h^{-1}\del_Y h\right) = 0$. Let $V^{[t]} := \restr{V}{X_t}$. If $\dbar_t\left(h^{-1}\del_t h\right) >_{\mathrm{Griff}} 0$ and
$$\Theta\left(h^{[t]}\right) + \left(\ricci(g) + \del_Y\dbar_Y\rho^{[t]}\right) \otimes \mathrm{Id}_{V^{[t]}} >_{\mathrm{Nak}} 0,$$
over $X_t$, for each $t \in \C^m$, then for any $z \in X_t$, and $\sigma \in \left(V^{[t]}_z\right)^{*}$, the function
$$(t,z) \mapsto \log\left\langle \sigma \otimes \bar{\sigma}, K_t(z,\bar{z})\right\rangle$$
is strictly plurisubharmonic or identically $-\infty$. Otherwise, if either $\dbar_t\left(h^{-1}\del_t h\right) \geq_{\mathrm{Griff}} 0$ and
$$\Theta\left(h^{[t]}\right) + \left(\ricci(g) + \del_Y\dbar_Y\rho^{[t]}\right) \otimes \mathrm{Id}_{V^{[t]}} \geq_{\mathrm{Nak}} 0,$$
over $X_t$, for each $t \in \C^m$, then for any $z \in X_t$, and $\sigma \in \left(V^{[t]}_z\right)^{*}$, then the function
$$(t,z) \mapsto \log\left\langle \sigma \otimes \bar{\sigma}, K_t(z,\bar{z})\right\rangle$$
is plurisubharmonic or identically $-\infty$.
\end{thmx}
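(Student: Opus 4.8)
The plan is to deduce the statement from Theorem~\ref{bergman-kernel-trivial-theorem} in three moves: replace the unweighted Bergman kernels on the varying fibres $X_t$ by weighted ones built from $\rho$, produce a twisting datum $(\delta,\eta)$ out of $\rho$ that turns the hypotheses here into those of the trivial-family theorem, and re-encode the variation of the $X_t$ so as to fit its fixed-fibre framework.

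First, for $\epsilon>0$ I would pass to the disk-fibred Hartogs domain $\hat X_t:=\{(z,w)\in X_t\times\C:|w|^2+\rho^{[t]}(z)<0\}$ and equip $\pi^{*}V$ over it with the metric $|w|^{2(\epsilon-1)}e^{-|w|^2}\pi^{*}h^{[t]}$; the device matters because this factor depends on $w$ alone, so it leaves $h$ unchanged in the $t$- and $Y$-directions and in particular preserves $\dbar_t(h^{-1}\del_Y h)=0$, while the $e^{-|w|^2}$ restores strict positivity in the $w$-direction (away from $\{w=0\}$, where the singular factor $|w|^{2(\epsilon-1)}$ is treated by working off that set or by regularizing). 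Expanding holomorphic sections in powers of $w$ and using $L^2$-orthogonality of the monomials over the rotationally symmetric fibres, the $w$-independent sections are exactly the sections of $V^{[t]}$ over $X_t$, and a short computation with an incomplete Gamma integral shows that, after multiplying by $\epsilon/\pi$, the Bergman kernel of $\mathcal{H}^2(\hat X_t,|w|^{2(\epsilon-1)}e^{-|w|^2}\pi^{*}h^{[t]})$ evaluated at points with $w=0$ decreases, as $\epsilon\downarrow0$, to $K_t(z,\bar z)$ — this is where $\rho^{[t]}\in[-1,0)$ is used. Since a decreasing limit of plurisubharmonic functions is plurisubharmonic or $\equiv-\infty$, and since the strict curvature hypothesis gives a lower bound for the complex Hessian of $\log\langle\sigma\otimes\bar\sigma,K^{(\epsilon)}_t(z,\bar z)\rangle$ that does not degenerate as $\epsilon\downarrow0$, it suffices to treat each $\epsilon>0$.

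Second, take $\delta:=3$ and $\eta:=-\tfrac12\log(-\rho)$, smooth and plurisubharmonic on $X$. From the identity $2\del_X\dbar_X\eta-(1+\delta)\del_X\eta\wedge\dbar_X\eta=\tfrac{4e^{\frac{1+\delta}{2}\eta}}{1+\delta}\del_X\dbar_X(-e^{-\frac{1+\delta}{2}\eta})$ with $\tfrac{1+\delta}{2}=2$ and $-e^{-2\eta}=\rho$ one gets $2\del_Y\dbar_Y\eta^{[t]}-4\del_Y\eta^{[t]}\wedge\dbar_Y\eta^{[t]}=\tfrac{1}{-\rho^{[t]}}\del_Y\dbar_Y\rho^{[t]}$, which dominates $\del_Y\dbar_Y\rho^{[t]}$ because $-\rho^{[t]}\in(0,1]$ and $\rho^{[t]}$ is plurisubharmonic on $Y$. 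Pulled back to $\hat X_t$, the metric of the first step keeps the mixed $(t,\hat Y)$-curvature zero, so $\Xi_{\delta,\eta}$ stays block-diagonal with diagonal blocks $\dbar_t(h^{-1}\del_t h)$ and $\tfrac{\delta}{1+\delta}(\Theta(h^{[t]})+\ricci(g)+2\del_Y\dbar_Y\eta^{[t]}-4\del_Y\eta^{[t]}\wedge\dbar_Y\eta^{[t]})\otimes\mathrm{Id}$; hence the hypotheses $\dbar_t(h^{-1}\del_t h)>_{\mathrm{Griff}}0$ and $\Theta(h^{[t]})+(\ricci(g)+\del_Y\dbar_Y\rho^{[t]})\otimes\mathrm{Id}>_{\mathrm{Nak}}0$ give at once $\Xi_{\delta,\eta}>_{\mathrm{Griff}}0$ and the twisted Nakano inequality demanded by Theorem~\ref{bergman-kernel-trivial-theorem}.

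The remaining issue — and the crux — is that $X_t$ (equivalently $\hat X_t$) varies with $t$, whereas Theorem~\ref{bergman-kernel-trivial-theorem} is for a fixed Stein manifold. Here I would adapt Berndtsson's reduction of a subset of a product to a product, but, since the variation of the $X_t$ is governed by the single plurisubharmonic $\rho$, carry out the reduction by folding the defining inequality $\rho^{[t]}<0$ into the twist $\eta$ of the previous step rather than into Berndtsson's auxiliary barrier weight — the latter being exactly what clashes with the operator $\Xi_{\delta,\eta}$. After an exhaustion of $Y$ as in the proof of Theorem~\ref{nakano-positivity-general-stein} to reduce to the relatively compact case, this should realize the relevant holomorphic Hermitian field of Hilbert spaces as one arising from a trivial family over $U$, at which point Theorem~\ref{bergman-kernel-trivial-theorem} — equivalently Theorem~\ref{griffiths-positivity-general-stein} together with the identification of the curvature of $K^{-1}_t$ with the dual metric on the evaluation section — applies and gives strict plurisubharmonicity (respectively plurisubharmonicity), or the value $-\infty$, of $(t,z)\mapsto\log\langle\sigma\otimes\bar\sigma,K^{(\epsilon)}_t(z,\bar z)\rangle$; the first move then assembles the conclusion. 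The work is concentrated in checking that folding the domain into $\eta$ creates no negativity in the off-diagonal blocks of $\Xi_{\delta,\eta}$, that the singular weights above stay admissible for the $L^2$-theory, and that the exhaustion passes to the limit.
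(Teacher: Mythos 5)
Your plan departs from the paper's proof in three places, and at each one there is a genuine problem.

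First, the twist function. You set $\eta:=-\tfrac12\log(-\rho)$ with $\rho$ the global plurisubharmonic function on $\C^m\times Y$, so $\eta$ depends on $t$. But $\Xi_{\delta,\eta}$ is defined with $\eta$ pulled back from the fibre (it enters only through $\pi^{*}_X$), and Theorem~\ref{bergman-kernel-trivial-theorem} explicitly takes $\eta$ to be a smooth function on the fibre alone. Feeding in a $t$-dependent $\eta$ is not permitted and, if allowed, would create additional $\del_t\eta$ contributions you have not accounted for. More seriously, $-\tfrac12\log(-\rho^{[t]})$ is only defined on $X_t=\{\rho^{[t]}<0\}$, whereas the trivial-family theorem must be applied on a fixed product $U_0\times Y_\varepsilon$ with $Y_\varepsilon\supsetneq X_t$; on $Y_\varepsilon\setminus X_t$ the proposed $\eta$ is not even defined. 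The paper's $\eta$ is built from $\rho$ but through a function $f$ (a $\log\cos$ of an affine rescaling) that is constructed precisely so that it remains smooth on all of $Y_\varepsilon$ and so that the residual block $M^{\eta,\rho}_j$ is non-negative.

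Second, you cannot replace the barrier weight by the twist. They serve different purposes. The barrier weight $e^{-\rho_j}$, with $\rho_j:=\tfrac1j\log(e^{j^2\rho}+1)$, is what forces $L^2$-holomorphic sections on the fixed product $U_0\times Y_\varepsilon$ to concentrate on $\{\rho<0\}$ as $j\to\infty$ (so that Proposition~\ref{prop-bergman-approx} can recover the Bergman kernel of $X_t$); the twist $\eta$ modifies the curvature operator. You cannot avoid the barrier weight by ``folding $\rho^{[t]}<0$ into $\eta$'': the twist changes the positivity hypothesis, not the $L^2$-space, so after your step 2 the relevant Hilbert spaces are still attached to varying fibres $X_t$ and Theorem~\ref{bergman-kernel-trivial-theorem} still does not apply. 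Your sentence ``this should realize the relevant holomorphic Hermitian field of Hilbert spaces as one arising from a trivial family'' is exactly where a concrete device is missing. Once the barrier weight is reinstated, the resulting metric $h_j=he^{-\rho_j}$ does have nonzero $\dbar_t\bigl(h_j^{-1}\del_Y h_j\bigr)$, so $\Xi_{\delta,\eta}(h_j)$ is \emph{not} block-diagonal; the paper handles the off-diagonal blocks $\del_t\dbar_Y\rho_j$ by grouping them with the full complex Hessian of $\rho_j$, which is $\geq 0$ because $\rho_j$ is an increasing convex function of the plurisubharmonic $\rho$. That leaves a residual fibre-direction block $M^{\eta,\rho}_j$ whose terms grow like $j$ and $j^3$, and this is why a fixed $\delta=3$ cannot work: the paper must take $\delta=\delta_j$ and $\eta=f(\rho^{[t]})$ both depending on $j$ to absorb these terms. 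Your step 2 quietly assumes the $t$-$Y$ mixing vanishes and that $\delta$ can be held fixed; neither survives the localization step.

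Third, the Hartogs-domain device in your step 1 plays no role in the paper's argument and does not remove the obstruction. Adding the $w$-disk fibre with weight $|w|^{2(\epsilon-1)}e^{-|w|^2}$ still leaves you with domains $\hat X_t$ varying with $t$, so the core problem (reducing to a trivial family) is untouched; it only inserts an extra singular weight whose admissibility you would separately have to check. By contrast, once the positivity of the twisted curvature of $h_j$ on $U_0\times Y_\varepsilon$ is secured, the paper passes to the unweighted Bergman kernels of $X_t$ directly via Propositions~\ref{prop-bergman-approx}, \ref{ramadanov-manifold-bergman-union-domains} and \ref{upper-semilemma-bergman}, and then obtains plurisubharmonicity in $(t,z)$ jointly by pushing along a holomorphic lift $\mathfrak F$ of $\del/\del t$ and using the biholomorphic invariance of Bergman kernels. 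This last step (the passage from $t$-subharmonicity for fixed $z$ to joint $(t,z)$-plurisubharmonicity) is entirely absent from your outline and is necessary, since the theorem asserts plurisubharmonicity of $(t,z)\mapsto\log\langle\sigma\otimes\bar\sigma,K_t(z,\bar z)\rangle$ and not just of $t\mapsto\log\langle\sigma\otimes\bar\sigma,K_t(z,\bar z)\rangle$.
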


\begin{thmx}\label{family-measures-nontrivial-theorem}
Let $Y$ be an $n$-dimensional Stein manifold. Let $\rho$ be a smooth plurisubharmonic function on $\C^m \times Y$ and let 
$$X = \{\rho(t,z) < 0\} \subset \C^m \times Y.$$ Suppose that for each $t$, the restriction $\rho^{[t]}$ of $\rho$ to 
$$X_t := \{z \in Y : (t,z) \in X\} \subset Y$$
takes values in $[-1,0)$. Let $g$ be a Kähler metric for $Y$ and let us equip $\C^m \times Y$ with the product metric induced by the Euclidean metric on $\C^m$ and the metric $g$ on $Y$. Let $V \rightarrow X$ be a holomorphic vector bundle and let $h$ be a smooth Hermitian metric for $V \rightarrow X$ such that $\dbar_t\left(h^{-1}\del_Y h\right) = 0$. Let $V^{[t]} := \restr{V}{X_t}$. Moreover, let $\left\{\hat{\mu}_t\right\}_{t\in U}$ be a family of $\left(V^{[t]}\right)^{*}$-valued complex measures over $X_t$ that are all locally supported in a compact subset of $X$. For a section $f \in \Gamma(E_h)$, define the measure $\mu^{(f)}_t = \left\langle f^{[t]},\hat{\mu}_t\right\rangle$. Suppose that the section $\xi^{(\mu)}$ of $E^{*}_h$ defined by
$$f^{[t]} \mapsto \left\langle\xi^{(\mu)}_t,f^{[t]}\right\rangle := \mu^{(f)}_t(X_t) = \int_{X_t} \left\langle f^{[t]},\hat{\mu}_t\right\rangle$$
is holomorphic. That is, $U \ni t \mapsto \mu^{(f)}_t(X_t)$ is holomorphic whenever $f \in \Gamma_{\mathcal{O}}(E_h)$.

If, for each $t \in \C^m$, $\dbar_t\left(h^{-1}\del_t h\right) >_{\mathrm{Griff}} 0$ and
$$\Theta\left(h^{[t]}\right) + \left(\ricci(g) + \del_Y\dbar_Y\rho^{[t]}\right) \otimes \mathrm{Id}_{V^{[t]}} >_{\mathrm{Nak}} 0$$ 
over $X_t$, then the function
$$U \ni t \mapsto \log\left(\norm{\xi^{(\mu)}}^2_{h^{[t]},*}\right)$$
is strictly plurisubharmonic or identically $-\infty$. Otherwise, if either $\dbar_t\left(h^{-1}\del_t h\right) \geq_{\mathrm{Griff}} 0$ and
$$\Theta\left(h^{[t]}\right) + \left(\ricci(g) + \del_Y\dbar_Y\rho^{[t]}\right) \otimes \mathrm{Id}_{V^{[t]}} \geq_{\mathrm{Nak}} 0,$$
over $X_t$, for each $t \in \C^m$, then for any $z \in X_t$, and $\sigma \in \left(V^{[t]}_z\right)^{*}$, then the function
$$U \ni t \mapsto \log\left(\norm{\xi^{(\mu)}}^2_{h^{[t]},*}\right)$$
is plurisubharmonic or identically $-\infty$.
\end{thmx}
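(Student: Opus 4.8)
The plan is to derive Theorem~\ref{family-measures-nontrivial-theorem} from the trivial-family results already established, by reducing the sublevel set $X=\{\rho<0\}\subset\C^m\times Y$ to a trivial family in the spirit of \citep{berndtsson2017} and then invoking the standard duality between Griffiths positivity of a field of Hilbert spaces and plurisubharmonicity of the $\log$ of the dual norm of one of its holomorphic sections. \emph{Step 1 (reduction to Griffiths positivity).} Plurisubharmonicity is local and may be tested on complex lines, so I would work near a fixed $t_0\in U$ (reducing to $m=1$ if convenient). Since $\xi^{(\mu)}$ is a holomorphic section of $E_h^{*}$ by hypothesis, it suffices---exactly as in the proof of Theorem~\ref{family-measures-trivial-theorem}---to prove that $\left(E_h,\left(\cdot,\cdot\right)_{h^{[t]}}\right)$, with fiber $\mathcal{H}^2_t=\mathcal{H}^2\!\left(X_t,h^{[t]}\right)$, is Griffiths positive (respectively semipositive) in the sense of Definition~\ref{griffiths-positivity-definition-general}: its dual is then Griffiths negative (respectively seminegative), so that $t\mapsto\log\norm{\xi^{(\mu)}}^2_{h^{[t]},*}$ is strictly plurisubharmonic or $\equiv-\infty$ (respectively plurisubharmonic or $\equiv-\infty$). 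This is the same Griffiths positivity that underlies Theorem~\ref{bergman-kernel-nontrivial-theorem}, which is the special case $\hat\mu_t=\sigma\,\delta_z$ with $z\in X_t$ and $\sigma\in\left(V^{[t]}_z\right)^{*}$, for which $\norm{\xi^{(\mu)}}^2_{h^{[t]},*}=\left\langle\sigma\otimes\bar\sigma,K_t(z,\bar z)\right\rangle$.

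\emph{Step 2 (Berndtsson's reduction, adapted to the twist).} To establish this Griffiths positivity I would, after introducing an auxiliary variable as in \citep{berndtsson2017}, realize the field $\left\{\mathcal{H}^2\!\left(X_t,h^{[t]}\right)\right\}_t$ over $\C^m$ as the field of a \emph{trivial} family over $\C^m$ with constant fiber a fixed Stein Kähler manifold, absorbing the moving boundary $\{\rho^{[t]}=0\}$ into the family of metrics through a weight built from $\rho$. To feed the outcome into Theorem~\ref{griffiths-positivity-general-stein}, this weight must be realizable as a twist, i.e. its curvature must have the shape $2\del\dbar\eta-(1+\delta)\del\eta\wedge\dbar\eta$; the natural choice is (the fiberwise shadow of) $\eta^{[t]}:=-\tfrac{2}{1+\delta}\log\!\left(-\rho^{[t]}\right)$, for which $-e^{-\frac{1+\delta}{2}\eta^{[t]}}=\rho^{[t]}$ and, by the identity recalled in the introduction,
$$2\del_Y\dbar_Y\eta^{[t]}-(1+\delta)\,\del_Y\eta^{[t]}\wedge\dbar_Y\eta^{[t]}=\frac{4}{(1+\delta)\left(-\rho^{[t]}\right)}\,\del_Y\dbar_Y\rho^{[t]}.$$
Since $\rho$ is plurisubharmonic, $\del_Y\dbar_Y\rho^{[t]}\geq0$, and since $-\rho^{[t]}\in(0,1]$ we have $\tfrac1{-\rho^{[t]}}\geq1$; choosing $\delta\in(0,3]$ so that $\tfrac{4}{1+\delta}\geq1$, the right-hand side is $\geq\del_Y\dbar_Y\rho^{[t]}$, whence $\bigl(\tfrac{4}{(1+\delta)(-\rho^{[t]})}-1\bigr)\,\del_Y\dbar_Y\rho^{[t]}\otimes\mathrm{Id}_{V^{[t]}}\geq_{\mathrm{Nak}}0$ and the Nakano hypothesis $\Theta\!\left(h^{[t]}\right)+\left(\ricci(g)+\del_Y\dbar_Y\rho^{[t]}\right)\otimes\mathrm{Id}_{V^{[t]}}>_{\mathrm{Nak}}0$ implies the twisted Nakano condition for $\eta^{[t]}$ (and likewise with $\geq$ for $>$). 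The hypothesis $\dbar_t\!\left(h^{-1}\del_Y h\right)=0$ annihilates both mixed blocks of $\Theta(h)$ (the second by Hermitian symmetry), so $\Xi_{\delta,\eta^{[t]}}(h)$ is block-diagonal with diagonal blocks $\dbar_t\!\left(h^{-1}\del_t h\right)$ and $\tfrac{\delta}{1+\delta}$ times the twisted curvature above; since $\dbar_t\!\left(h^{-1}\del_t h\right)>_{\mathrm{Griff}}0$ by hypothesis and the second block is (semi)positive by the previous sentence, one gets $\Xi_{\delta,\eta^{[t]}}(h)>_{\mathrm{Griff}}0$ (respectively $\geq_{\mathrm{Griff}}0$). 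Thus the hypotheses of Theorem~\ref{family-measures-nontrivial-theorem} translate, under the reduction, into those of Theorem~\ref{griffiths-positivity-general-stein} for the resulting trivial family, which gives the Griffiths (semi)positivity required in Step~1 and hence the theorem.

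\emph{Main obstacle.} The crux is Step~2: making Berndtsson's reduction compatible with the twisted curvature operator $\Xi_{\delta,\eta}$. In the untwisted setting of \citep{berndtsson2017} the weight that straightens the moving boundary is at one's disposal, whereas here it is pinned down by the requirement that it arise from a twist $\eta$, and the resulting discrepancy between $2\del\dbar\eta-(1+\delta)\del\eta\wedge\dbar\eta$ and $\del_Y\dbar_Y\rho^{[t]}$ is exactly the factor $\tfrac{4}{(1+\delta)(-\rho^{[t]})}$; it is precisely the hypothesis $\rho^{[t]}\in[-1,0)$, together with an admissible choice of $\delta$, that forces this factor to be $\geq1$ and so keeps the discrepancy on the side that preserves positivity---which is why the result is confined to this class of $\rho$. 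One must also re-express the $t$-dependent fiberwise twist $\eta^{[t]}$ as a genuine twist on the fixed fiber produced by the reduction, and check that $\xi^{(\mu)}$ survives the reduction as a bounded holomorphic section of the dual field; the latter uses that the supports of the $\hat\mu_t$ lie, locally in $t$, inside a fixed compact subset of $X$, hence uniformly away from the moving boundary, so that $f^{[t]}\mapsto\int_{X_t}\langle f^{[t]},\hat\mu_t\rangle$ is bounded by an interior $L^2$-estimate on $f^{[t]}$.
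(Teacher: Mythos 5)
Your overall plan — restrict to a neighborhood in $t$, pass to a trivial family with a weight built from $\rho$, derive Griffiths (semi)positivity, and conclude via the dual-norm characterization — matches the paper's strategy in outline. But Step 2 contains a genuine conceptual gap: you conflate the \emph{cutoff weight} (which converts the moving-fiber family $\{\HH(X_t,h^{[t]})\}_t$ into a trivial family over a fixed fiber $Y_\varepsilon$) with the \emph{twist} $\eta$ appearing in $\Xi_{\delta,\eta}$. The twist does not alter the metric or the $L^2$ norms; it enters only the curvature conditions of Theorems~\ref{griffiths-positivity-general-stein} and \ref{family-measures-trivial-theorem}. Consequently your choice $\eta^{[t]}=-\tfrac{2}{1+\delta}\log(-\rho^{[t]})$ does nothing to absorb the boundary $\{\rho^{[t]}=0\}$ into the metric, and the field $\{\HH(X_t,h^{[t]})\}_t$ remains a genuinely $t$-varying family to which the trivial-family results do not apply. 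Nothing in your argument actually produces the fixed fiber required by those theorems.

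What the paper does instead is to apply, on the \emph{fixed} fiber $Y_\varepsilon$, the cutoff metric $h_j := h e^{-\rho_j}$ with $\rho_j := \tfrac1j\log\left(e^{j^2\rho}+1\right)$ (borrowed from the proof of Theorem~\ref{bergman-kernel-nontrivial-theorem}), so that $\norm{\cdot}_{L^2(\{t\}\times Y_\varepsilon,\,h_j^{[t]})}\to\norm{\cdot}_{L^2(X_t,\,h^{[t]})}$ as $j\to\infty$. The curvature of this cutoff contributes $j$-dependent terms $\tfrac{je^{j^2\rho}}{1+e^{j^2\rho}}\del_Y\dbar_Y\rho$ and $\tfrac{j^3e^{j^2\rho}}{(1+e^{j^2\rho})^2}\del_Y\rho\wedge\dbar_Y\rho$, and it is to dominate \emph{these} — not $\tfrac{1}{-\rho^{[t]}}\,\del_Y\dbar_Y\rho^{[t]}$ — that one needs a carefully engineered twist $\eta=f(\rho^{[t]})$ (involving $\log\cos$), a $j$-dependent $\delta=\delta_j$, and the hypothesis $\rho^{[t]}\in[-1,0)$ to keep the argument of $\tan$ in $[\pi/4,\pi/2)$. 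Your twist targets the wrong term, so even with the cutoff added, the curvature inequalities would not close. Finally, you flag but do not carry out the norm convergence $\norm{\xi^{(\mu)}}^2_{*,h_j^{[t]},\{t\}\times Y_\varepsilon}\to\norm{\xi^{(\mu)}}^2_{*,h^{[t]},X_t}$ and the removal of $t$-dependence of the twist — both of which the paper handles (the first is essentially the only content of the proof of Theorem~\ref{family-measures-nontrivial-theorem} once the proof of Theorem~\ref{bergman-kernel-nontrivial-theorem} is in place).
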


\section{Background}\label{trivial-families-general-section}
\subsection{Definitions for Fields of Hilbert Spaces}
Suppose now that $X$ is an unbounded Stein manifold. Then $E_h$ is no longer necessarily a Hilbert bundle, but rather a field of Hilbert spaces whose fibers are not necessarily isomorphic. This means that we will need to \textit{define} Griffiths positivity and Nakano positivity for these fields of Hilbert spaces alternatively. These definitions are inspired by the analytic characterizations that are equivalent to these notions of positivity in the vector bundle case (see \citep{berndtsson-2009-annals}, \citep{demailly-ag-book}, \citep{raufi2013log}, \citep{raufi2015} and \citep{varolin2019notes}). Our definitions of sections of $E_h$ and its dual are essentially the same as the ones for the locally trivial case.\\

We will be making use of Theorem \ref{thm-nakano-positivity-smooth-bounded-stein} in our proofs of Theorems \ref{griffiths-positivity-general-stein} and \ref{nakano-positivity-general-stein}.

\begin{definition}
A section $\hat{f}$ of the fields of Hilbert spaces $E_h$ is a section of $\pi^{*}_X V \rightarrow U \times X$ such that $\restr{\hat{f}}{\{t\} \times X} \in E_t := \HH_t$ for each $t \in U$. We will write $\hat{f}^{[t]} := \restr{\hat{f}}{\{t\} \times X}$.\\

The section $\hat{f}$ of $E_h$ is said to be holomorphic if it is holomorphic in the total space. In particular, all sections are holomorphic on the fibers.
\end{definition}

\begin{definition}
Let $E^{*}_h$ denote the holomorphic Hermitian field of Hilbert spaces dual to $E_h$ -- that is the fiber $E^{*}_t$ of $E^{*}_h$ over $t \in U$ is the Hilbert space dual of $\HH_t$ with its usual fiberwise Hilbert norm
$$\norm{\xi}_{*,h^{[t]}} := \sup_{f \in \HH_t-\{0\}} \dfrac{\abs{\left\langle \restr{\xi}{\HH_t},f \right\rangle}}{\norm{f}_{h^{[t]}}}.$$
\end{definition}

\begin{definition}
A section of $E^{*}_h$ is a map $\xi : E_h \rightarrow \C$ such that $\xi_t := \restr{\xi}{\HH_t} \in \left(\HH_t\right)^{*}$. The section $\xi$ is said to be smooth (resp. holomorphic) if for each smooth (resp. holomorphic) section $\hat{f}$ of $E_h$, the function $U \ni t \mapsto \left\langle \xi_t,\hat{f}^{[t]}\right\rangle \in \C$ is smooth (resp. holomorphic).\\
\end{definition}

Now, let $F_h$ denote the field of Hilbert spaces with fiber $F_t := L^2_t$ over $t \in U$ and let $\mathcal{P}_t : L^2_t \rightarrow \HH_t$ denote the fiberwise Bergman projection.

\begin{definition} (Connections)
\begin{itemize}
    \item The Chern connection $\nabla^{F_h}$ of $F_h$ is formally defined as the following collection of operators $\nabla^{F_h}_{t_j}$ for $1 \leq j \leq m$ given by $\nabla^{F_h}_{t_j}u^{[t]} = d_{t_j}u^{[t]} -\left(\left(h^{[t]}\right)^{-1}\del_{t_j} h^{[t]}\right)u^{[t]},$
    for a section $u$ of $F_h$. The domain of each $\nabla^{F_h}_{t_j}$ consists of sections $u$ of $F_h$ such that 
$$\del_{t_j}u^{[t]} -\left(\left(h^{[t]}\right)^{-1}\del_{t_j} h^{[t]}\right)u^{[t]} \in L^2_t$$ 
for each $t \in U$.
\begin{itemize}
    \item We may also define $\nabla^{F_h}_{t_j}$, as in the vector bundle case, by the relation
$$\left(\nabla^{F_h}_{t_j}u^{[t]},v_t\right)_{h^{[t]}} := d_{t_j}(u^{[t]},v_t)_{h^{[t]}} - \left(u^{[t]},d_{t_j}v_t\right)_{h^{[t]}}$$
for any two sections $u$ and $v$ of $E_h$.
\end{itemize}

\item The $(1,0)$-part of the connection is defined by the collection of operators $\nabla^{F_h,(1,0)}_{t_j}$ mapping $u^{[t]} \in L^2_t$ to $\del_{t_j}u^{[t]} -\left(\left(h^{[t]}\right)^{-1}\del_{t_j} h^{[t]}\right)u^{[t]}$. The domain of each $\nabla^{F_h,(1,0)}_{t_j}$ consists of sections $u$ of $F_h$ such that $\del_{t_j}u^{[t]} -\left(\left(h^{[t]}\right)^{-1}\del_{t_j} h^{[t]}\right)u^{[t]} \in L^2_t$ for each $t \in U$.

\item The $(0,1)$-part of the connection is defined as the collection of $\dbar$-operators $\dbar^{F_h}_{t_j}$. The domain of each $\dbar^{F_h}_{t_j}$ consists of sections $u$ of $F_h$ such that $\dbar_{t_j} u^{[t]} \in L^2_t$ for each $t \in U$.

The corresponding connections for $E_h$ are defined as the respective Bergman projections of each connection, with the domains similarly defined. So we have $\nabla^{E_h}_{t_j} := \mathcal{P}_t \circ \nabla^{F_h}_{t_j}$, $\nabla^{E_h,(1,0)}_{t_j} := \mathcal{P}_t \circ \nabla^{F_h,(1,0)}_{t_j}$ and $\dbar^{E_h}_{t_j} := \mathcal{P}_t \circ \dbar^{F_h}_{t_j}$.
\end{itemize}
\end{definition}

We will abusively denote the $\dbar$-operators for $E_h$ and $F_h$ interchangeably. 

\begin{definition} (Curvatures)
\begin{itemize}
    \item The curvature $\Theta^{F_h}$ of $F_h$ is the $(1,1)$-form of endomorphisms
    $$\Theta^{F_h} = \sum_{1 \leq j,k \leq m} \Theta^{F_h}_{t_j \bar{t}_k} dt_j \wedge d\bar{t}_k,$$
    where the multiplier coefficients $\Theta^{F_h}_{t_j \bar{t}_k}$ are endomorphisms of $V \rightarrow X$ defined on $X$ by $\Theta^{F_h}_{t_j \bar{t}_k} = \dbar_{t_k}\left(\left(h^{[t]}\right)^{-1}\del_{t_j} h^{[t]}\right)$. The domain of each $\Theta^{F_h}_{t_j \bar{t}_k}$ consists of sections $u$ of $F_h$ such that $$\dbar_{t_k}\left(\left(h^{[t]}\right)^{-1}\del_{t_j} h^{[t]}\right) u^{[t]} \in L^2_t$$ for each $t \in U$.
    \item The curvature of $\Theta^{E_h}$ of $E_h$ is the $(1,1)$-form of endomorphisms $\Theta^{E_h}_{t_j\bar{t}_k}$ of $V \rightarrow X$ defined as by the relation
    $$\left(\Theta^{F_h}_{t_j \bar{t}_k}u^{[t]},v^{[t]}\right)_{h^{[t]}} = \left(\mathcal{P}^{\perp}_t\left(\nabla^{F_h}_{t_j}u^{[t]}\right),\mathcal{P}^{\perp}_t\left(\nabla^{F_h}_{t_k}v^{[t]}\right)\right)_{h^{[t]}} + \left(\Theta^{E_h}_{t_j \bar{t}_k}u^{[t]},v^{[t]}\right)_{h^{[t]}}$$
    for any two sections $u$ and $v$ of $E_h$. The domain of each endomorphism $\Theta^{E_h}_{t_j \bar{t}_k}$ consists of sections $u$ of $E_h$ such that $\Theta^{E_h}_{t_j \bar{t}_k} u^{[t]} \in L^2_t$ for each $t \in U$.
\end{itemize}
\end{definition}

\begin{definition}\label{griffiths-positivity-definition-general}(Griffiths positivity)
    \item The holomorphic Hermitian field of Hilbert spaces $\left(E_h,(\cdot,\cdot)_{h^{[t]}}\right)$ is said to be Griffiths semipositive (resp. positive) if the function $$U \ni t \mapsto \log\left(\norm{\xi}^2_{*,h^{[t]}}\right)$$
    is (strictly) plurisubharmonic or identically $-\infty$ for every holomorphic section $\xi$ of $E^{*}_h$.
\end{definition}

It is well-known that this definition is equivalent to the usual definition of Griffiths positivity when $E_h$ is bona fide holomorphic Hilbert bundle.\\

Now, consider the $(m-1,m-1)$-form
$$T_u = \sum_{1 \leq j,k \leq m} \left(u^{[t]}_j,u^{[t]}_k\right)_{h^{[t]}} \widehat{dt_j \wedge d\bar{t}_k},$$
for an $m$-tuple $u = (u_1, \cdots, u_m)$ of holomorphic sections of $E_h$. Here,
$$\widehat{dt_j \wedge d\bar{t}_k} = c_n dt_1 \wedge \cdots \wedge dt_{j-1} \wedge dt_{j+1} \wedge \cdots \wedge dt_m \wedge d\bar{t}_1 \wedge \cdots \wedge d\bar{t}_{k-1} \wedge d\bar{t}_{k+1} \wedge d\bar{t}_m,$$
where $c_n$ is a unimodular constant chosen so that $\widehat{dt_j \wedge d\bar{t}_k}$ is a positive form.
\newpage
\begin{definition}\label{nakano-positivity-general-definition} (Nakano positivity)
    \begin{itemize}
        \item The holomorphic Hermitian field of Hilbert spaces $\left(E_h,(\cdot,\cdot)_{h^{[t]}}\right)$ is said to be Nakano positive (resp. semipositive) at $t \in U$ if 
        $$\exists c_0 > 0 \text{ (resp. } = 0\text{)}: \del_U\dbar_U\left(-T_u\right) \geq c_0\sum_{k = 1}^m \norm{u^{[t]}_k}^2_{h^{[t]}}dV(t)$$
        for any $m$-tuple $(u_1, \cdots, u_m)$ of holomorphic sections of $E_h$ belonging to the domains of $\nabla^{E_h,(1,0)}_{t_j}$ and $\Theta^{E_h}_{t_j \bar{t}_k}$ and such that $\nabla^{E_h,(1,0)}_{t_j}u^{[t]}_j = 0$ at $t$, for all $1 \leq j,k \leq m$.\\
        \item The holomorphic Hermitian field of Hilbert spaces $\left(E_h,(\cdot,\cdot)_{h^{[t]}}\right)$ is said to be Nakano (semi)positive if it is Nakano (semi)positive at every $t \in U$.
    \end{itemize}
\end{definition}

This definition is equivalent to the usual definition of Nakano positivity when $E_h$ is bona fide holomorphic Hilbert bundle (see, for instance, \citep{varolin2019notes}).

\subsection{Bergman kernels and their properties}

The following proposition, known as Bergman's inequality, captures the local uniform boundedness of the point evaluation map which defines complex-geometric reproducing kernel Hilbert spaces ($\mathbb{C}$-RKHS); also known as Bergman spaces.

\begin{proposition}\label{prop-bergman-regularity}
Let $X$ be a complex manifold with Borel measure $\mu$ and let $V \rightarrow X$ be a holomorphic vector bundle with singular Hermitian metric $h$. Assume that
\begin{enumerate}
    \item the Borel measure $\mu$ is absolutely continuous with respect to Lebesgue measure, and its local Radon-Nikodym derivatives are locally bounded below,
    \item the metric $h$ is locally bounded below, i.e., for every local frame $e_1, \cdots, e_r$ of $V \rightarrow X$, there exists a constant $c_0 > 0$ such that $\abs{\sum_{i = 1}^n v_i e_i}^2_h \geq c_0 \sum_{i = 1}^n \abs{v}^2_i$.
\end{enumerate}
Then for every set $K$ with compact closure in $X$, there exists a constant $C_K$ such that if $s \in \mathcal{H}^2(\mu,h)$, then
$$\sup_{K}\abs{s}^2_h \leq C_K\int_X \abs{s}^2_h d\mu.$$
\end{proposition}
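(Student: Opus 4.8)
The plan is to reduce the statement to the classical mean-value inequality for holomorphic functions on balls in $\C^n$. First I would fix a point $p \in K$ and choose a coordinate chart $(\Omega, z)$ around $p$ together with a holomorphic frame $e_1, \dots, e_r$ of $V$ over $\Omega$, arranged so that $z(\Omega)$ contains a closed polydisc (or ball) $\overbar{B(0,2\rho)}$ and $z(p) = 0$. Writing $s = \sum_i s_i e_i$ with $s_i \in \holo(\Omega)$, hypothesis (2) gives a constant $c_0 > 0$ on a slightly shrunk neighborhood with $\abs{s}^2_h \geq c_0 \sum_i \abs{s_i}^2$, and hypothesis (1) gives a constant $c_1 > 0$ with $d\mu \geq c_1 \, d\lambda$ on that neighborhood, where $d\lambda$ is Lebesgue measure in the $z$-coordinates. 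Hence
$$\int_{B(0,2\rho)} \sum_i \abs{s_i}^2 \, d\lambda \leq \frac{1}{c_0 c_1}\int_X \abs{s}^2_h \, d\mu.$$

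Next I would apply the sub-mean-value property: each $\abs{s_i}^2$ is plurisubharmonic (indeed $\abs{s_i}^2 = \abs{s_i}^2$ with $s_i$ holomorphic), so for any $q$ with $z(q) \in B(0,\rho)$,
$$\abs{s_i(q)}^2 \leq \frac{1}{\lambda(B(0,\rho))}\int_{B(z(q),\rho)} \abs{s_i}^2 \, d\lambda \leq \frac{1}{\lambda(B(0,\rho))}\int_{B(0,2\rho)} \abs{s_i}^2 \, d\lambda.$$
Now I need an upper bound for $\abs{s}^2_h$ in terms of $\sum_i \abs{s_i}^2$: since $h$ is a metric (locally, a positive-definite Hermitian matrix with locally bounded entries — this is part of what ``singular Hermitian metric'' that is locally bounded below on a fixed chart should be taken to include, or else one restricts to the locus where $h$ is bounded above, as in the Bergman-space setting where $h$ need only be bounded above on compacts), there is $c_2$ with $\abs{s}^2_h \leq c_2 \sum_i \abs{s_i}^2$ on the shrunk neighborhood. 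Combining the three displayed inequalities yields, for all $q$ in a neighborhood $W_p$ of $p$,
$$\abs{s(q)}^2_h \leq \frac{c_2}{c_0 c_1 \, \lambda(B(0,\rho))}\int_X \abs{s}^2_h \, d\mu =: C_p \int_X \abs{s}^2_h \, d\mu.$$
Finally, since $\overbar{K}$ is compact it is covered by finitely many such neighborhoods $W_{p_1}, \dots, W_{p_N}$; taking $C_K = \max_j C_{p_j}$ gives the claimed uniform estimate $\sup_K \abs{s}^2_h \leq C_K \int_X \abs{s}^2_h \, d\mu$.

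The only delicate point — and the place where the hypotheses must be read carefully — is obtaining the \emph{upper} bound $\abs{s}^2_h \leq c_2 \sum_i \abs{s_i}^2$ on a fixed chart. Hypotheses (1) and (2) as stated only provide lower bounds, which is exactly what is needed to pass from the $L^2(h,\mu)$-norm to the $L^2$-Lebesgue norm of the components; the pointwise upper bound in the conclusion then requires that $h$ (and one does not need $\mu$ here) be locally bounded \emph{above}, which holds automatically for a smooth metric and, for a general singular metric, should be understood either as an additional standing assumption in this $\C$-RKHS context or circumvented by first proving the estimate for $\sum_i \abs{s_i}^2$ and invoking it only where $h$ is bounded. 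I expect the author's proof to run along exactly these lines, with the mean-value inequality doing the essential work and the two hypotheses used solely to compare norms on a coordinate patch.
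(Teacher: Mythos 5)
The paper never proves Proposition \ref{prop-bergman-regularity}: it is stated as the classical Bergman inequality and used as a black box (the next proof in the text is that of Proposition \ref{prop-bergman-approx}), so there is no argument of the author's to measure yours against. Your mean-value-property proof is the standard one and is essentially correct: hypotheses (1) and (2) give exactly the one-sided comparison $\int_{B}\sum_i\lvert s_i\rvert^2\,d\lambda \le (c_0c_1)^{-1}\int_X\lvert s\rvert^2_h\,d\mu$ on a coordinate ball, plurisubharmonicity of each $\lvert s_i\rvert^2$ gives the pointwise bound at the center, and a finite cover of $\overline{K}$ finishes. The caveat you flag is genuine and is a defect of the \emph{statement}, not of your proof: with $h$ only assumed locally bounded below, the conclusion $\sup_K\lvert s\rvert^2_h\le C_K\int_X\lvert s\rvert^2_h\,d\mu$ can fail --- for instance for a trivial line bundle with $h=\lvert z\rvert^{-1}$ near a point of $K$ (locally bounded below, integrable singularity), a section with $s(0)\ne 0$ has finite $L^2(\mu,h)$-norm but $\sup_K\lvert s\rvert^2_h=+\infty$. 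A local upper bound on $h$ (automatic for the smooth metrics to which the paper actually applies the proposition, and implicit in calling $(\mu,h)$ a $\C$-RKHS structure, since point evaluation must be bounded) has to be added to the hypotheses, or else the left-hand side must be read as $\sup_K\sum_i\lvert s_i\rvert^2$ in a frame; under either reading your argument is complete.
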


In the sequel, given a complex manifold $X$ with bordel measure $\mu$ and a holomorphic vector bundle $V \rightarrow X$ with Hermitian metric $h$ satisfying Proposition \ref{prop-bergman-regularity}, we refer to $(\mu,h)$ as a $\mathbb{C}$-RKHS structure.\\

Let $\mathcal{H}^2(\mu,h)$ denote the space of holomorphic sections of $V \rightarrow X$ whose square-norm, with respect to the metric $h$, is integrable with respect to the measure $\mu$. The Bergman kernel of $\mathcal{H}^2(\mu,h)$ is given by the series
$$K = \sum_{j \geq 0} s_j \otimes \bar{s}_j$$
which is locally uniformly convergent, but not necessarily convergent in $\mathcal{H}^2(\mu,h) \otimes \left(\mathcal{H}^2(\mu,h)\right)^{\dagger}$. Here, $\{s_j\}_{j \geq 0}$ denotes any orthonormal Riesz basis of $\mathcal{H}^2(\mu,h)$ and $M^{\dagger}$ is the complex manifold with the complex conjugate structure of $M$. This series expansion is useful in proving the following extremal characterization of the Bergman kernel.

\begin{theorem}\label{theorem-extermal-bergman}
Let $\mathcal{H}^2(\mu,h)$ be a Bergman space. The Bergman kernel $K \in \holosections\left(X \times X^{\dagger},V\boxtimes V^{\dagger}\right)$ of $\mathcal{H}^2(\mu,h)$ is uniquely determined by the extremal property
\begin{equation}
    \forall x \in X, \sigma \in V^{*}_x : \left\langle\sigma \otimes \bar{\sigma},K(x,\bar{x})\right\rangle = \sup_{u \in \mathcal{H}^2(\mu,h)-\{0\}} \dfrac{\abs{\left\langle\sigma,u(x)\right\rangle}^2}{\norm{u}^2_{L^2(\mu,h)}}
\end{equation}

Moreover, the supremum is a maximum. That is for each $\sigma \in V^{*}$, there exists a section $u \in \mathcal{H}^2(\mu,h)$ that is unique up to a unimodular constant factor, such that $\norm{u}_{L^2(\mu,h)} = 1$ and $\left\langle \sigma \otimes \bar{\sigma},K(\pi\sigma,\overbar{\pi\sigma})\right\rangle = \abs{\left\langle\sigma,u(\pi\sigma)\right\rangle}^2$, for all $\sigma \in V^{*}$, where $\pi : V^{*} \rightarrow X$ denotes the bundle projection.
\end{theorem}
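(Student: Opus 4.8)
The plan is to run the standard reproducing-kernel argument, using only Bergman's inequality (Proposition~\ref{prop-bergman-regularity}) and elementary Hilbert-space geometry. First I would fix $x \in X$ and $\sigma \in V^{*}_x$ and note that the point-evaluation functional $\mathrm{ev}_{x,\sigma} \colon \mathcal{H}^2(\mu,h) \to \C$, $u \mapsto \langle\sigma,u(x)\rangle$, is bounded: indeed $\abs{\langle\sigma,u(x)\rangle} \le \abs{\sigma}\,\abs{u(x)}_h$, and Proposition~\ref{prop-bergman-regularity} applied to the set $\{x\}$ bounds $\abs{u(x)}_h$ by a constant multiple of $\norm{u}_{L^2(\mu,h)}$. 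By the Riesz representation theorem there is a unique $k_{x,\sigma} \in \mathcal{H}^2(\mu,h)$ with $\langle\sigma,u(x)\rangle = (u,k_{x,\sigma})_{L^2(\mu,h)}$ for every $u$. Expanding $k_{x,\sigma}$ in an orthonormal basis $\{s_j\}_{j\ge0}$ of $\mathcal{H}^2(\mu,h)$ gives $k_{x,\sigma} = \sum_{j\ge0}\overline{\langle\sigma,s_j(x)\rangle}\,s_j$, whence
$$\norm{k_{x,\sigma}}^2_{L^2(\mu,h)} = \sum_{j\ge0}\abs{\langle\sigma,s_j(x)\rangle}^2 = \left\langle\sigma\otimes\bar\sigma, K(x,\bar x)\right\rangle,$$
the local uniform convergence of $K = \sum_{j\ge0} s_j\otimes\bar s_j$ recalled in the text making both $k_{x,\sigma}$ and this identity legitimate.

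The extremal formula is then just the Cauchy--Schwarz inequality: for every $u \in \mathcal{H}^2(\mu,h)\setminus\{0\}$,
$$\frac{\abs{\langle\sigma,u(x)\rangle}^2}{\norm{u}^2_{L^2(\mu,h)}} = \frac{\abs{(u,k_{x,\sigma})_{L^2(\mu,h)}}^2}{\norm{u}^2_{L^2(\mu,h)}} \le \norm{k_{x,\sigma}}^2_{L^2(\mu,h)} = \left\langle\sigma\otimes\bar\sigma,K(x,\bar x)\right\rangle,$$
with equality exactly when $u$ is a scalar multiple of $k_{x,\sigma}$. In particular the supremum equals the right-hand side, and, provided $k_{x,\sigma}\neq0$ (equivalently the right-hand side is positive), it is attained; the equality case of Cauchy--Schwarz shows that the normalized extremal $u = k_{x,\sigma}/\norm{k_{x,\sigma}}_{L^2(\mu,h)}$ is unique up to a unimodular factor, which is precisely the ``moreover'' clause for $\sigma$ with $\pi\sigma = x$.

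It remains to show that the extremal property characterizes $K$. The right-hand side of the extremal formula does not involve the basis $\{s_j\}$, so the map $\sigma \mapsto \langle\sigma\otimes\bar\sigma,K(x,\bar x)\rangle$ on $V^{*}_x$ is intrinsic; by polarization it determines the Hermitian element $K(x,\bar x) \in V_x\otimes\overline{V_x}$ for every $x$, i.e.\ the restriction of $K$ to the anti-diagonal of $X\times X^{\dagger}$. Since $K$ is a holomorphic section of $V\boxtimes V^{\dagger}$, its coefficient functions in any pair of local holomorphic frames are holomorphic in $(x,\bar y)$, and two such sections agreeing on the maximal totally real anti-diagonal $\{(x,\bar x)\}$ agree identically (uniqueness of holomorphic extension from a maximal totally real submanifold). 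Hence any sesqui-holomorphic kernel with the stated extremal values equals $K$; in particular $K = \sum_{j\ge0}s_j\otimes\bar s_j$ is independent of the chosen orthonormal basis. The only step that is not pure bookkeeping is this last one --- propagating the agreement from the anti-diagonal to all of $X\times X^{\dagger}$ --- but it presents no real difficulty once phrased in terms of the coefficient functions in local frames, where it is the ordinary identity theorem after complexifying the totally real equations $\bar y = \bar x$.
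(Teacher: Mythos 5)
Your proof is correct and follows the standard reproducing-kernel argument that the paper gestures at when it says the series expansion $K = \sum_{j\geq 0} s_j\otimes\bar s_j$ ``is useful in proving'' the theorem; the paper itself does not spell out a proof, so there is nothing substantive to contrast against. The one detail worth flagging is that your uniqueness step correctly identifies the anti-diagonal $\{(x,\bar x)\}$ as a maximal totally real submanifold of $X\times X^{\dagger}$, which is exactly the right justification for why the extremal values on the anti-diagonal determine the holomorphic section $K$ everywhere.
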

The following three propositions generalize \citep[Lemmas 3.1, 3.2, and 3.3]{berndtsson-subharmonicity}. We prove the first one. The next two essentially follow from special cases of the generalization of Ramadanov's theorem in \citep{pasternak-winiarski-wojcicki-weighted-ramadanov}. They can be shown, for instance, following the methods of proof found in \citep{pasternak-winiarski-wojcicki-weighted-ramadanov}.

\begin{proposition}\label{prop-bergman-approx}
Let $\Omega_0$ and $\Omega_1$ be bounded domains in a complex manifold $X$ such that $\Omega_0 \Subset \Omega_1$. Let $V \rightarrow \Omega_1$ be a holomorphic vector bundle and let $\mu$ be a Borel measure for $\Omega_1$. Let $\{h_j\}_{j \geq 0}$ be a sequence of Hermitian metrics for $V \rightarrow \Omega_1$ such that $(\mu,h_j)$ is a $\C$-RKHS structure for each $j$. Assume further that $\restr{h_j}{\overbar{\Omega}_0} = h$ for some metric $h$ for $\restr{V}{\overbar{\Omega}_0} \rightarrow \overbar{\Omega}_0$, and that $h_j \searrow 0$ almost everywhere in $\Omega_1 - \Omega_0$. Assume that $\mathcal{H}^2(\Omega_1,h)$ is dense in $\mathcal{H}^2(\Omega_0,h)$. Fix a point $z \in \Omega_0$. Let $K_j$ be the Bergman kernel for $\mathcal{H}^2(\Omega_1,h_j)$, and let $K$ be the Bergman kernel for $\mathcal{H}^2(\Omega_0,h)$. Then, denoting by $\iota : \Omega_0 \hookrightarrow \Omega_1$ the inclusion map,
$$\forall \sigma \in V^{*}_{\iota z} : \left\langle \left(\iota^{-1}\right)^{*}\left(\sigma \otimes \bar{\sigma}\right), K_j\left(\iota z,\overbar{\iota z}\right)\right\rangle \nearrow \left\langle \sigma\otimes\bar{\sigma},K(z,\bar{z}) \right\rangle.$$
\end{proposition}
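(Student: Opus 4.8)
The plan is to argue via the extremal characterization of Bergman kernels (Theorem~\ref{theorem-extermal-bergman}), establishing monotonicity and convergence separately. First I would fix $z \in \Omega_0$ and $\sigma \in V^{*}_{\iota z}$. For monotonicity, observe that since $h_j \searrow$ pointwise a.e.\ on $\Omega_1$ (it is constant $=h$ on $\overbar{\Omega}_0$ and decreasing to $0$ elsewhere), we have $h_{j+1} \leq h_j$ everywhere, hence $\norm{u}^2_{L^2(\Omega_1,h_{j+1})} \leq \norm{u}^2_{L^2(\Omega_1,h_j)}$ for every $u \in \holosections(\Omega_1,V)$. Therefore the Rayleigh quotient $\abs{\langle\sigma,u(\iota z)\rangle}^2 / \norm{u}^2_{L^2(\Omega_1,h_j)}$ is nondecreasing in $j$ for each fixed $u$, and taking suprema over $u \in \mathcal{H}^2(\Omega_1,h_{j+1}) \subseteq \mathcal{H}^2(\Omega_1,h_j)$ (the inclusion of spaces again from $h_{j+1}\le h_j$) gives that $\langle(\iota^{-1})^{*}(\sigma\otimes\bar\sigma),K_j(\iota z,\overbar{\iota z})\rangle$ is nondecreasing in $j$. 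One also needs the uniform upper bound: restricting to $\Omega_0$, where $h_j = h$, Bergman's inequality (Proposition~\ref{prop-bergman-regularity}) on a small neighborhood of $z$ relatively compact in $\Omega_0$ gives $\abs{u(\iota z)}^2_h \leq C\int_{\Omega_0}\abs{u}^2_h\,d\mu \leq C\int_{\Omega_1}\abs{u}^2_{h_j}\,d\mu$, so the quotients are bounded above by $C$ uniformly in $j$, and the increasing sequence converges to some finite limit $L \leq \langle\sigma\otimes\bar\sigma,K(z,\bar z)\rangle$ (the last inequality because restricting a competitor from $\Omega_1$ to $\Omega_0$ only decreases the $L^2$-norm while preserving the point value).

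It remains to show $L \geq \langle\sigma\otimes\bar\sigma,K(z,\bar z)\rangle$, i.e.\ that no mass is lost in the limit. Here I would use the density hypothesis: let $u_0 \in \mathcal{H}^2(\Omega_0,h)$ be the (normalized) extremizer realizing $\langle\sigma\otimes\bar\sigma,K(z,\bar z)\rangle$ from Theorem~\ref{theorem-extermal-bergman}. By density of $\mathcal{H}^2(\Omega_1,h)$ in $\mathcal{H}^2(\Omega_0,h)$, pick $v_\varepsilon \in \mathcal{H}^2(\Omega_1,h)$ with $\norm{v_\varepsilon - u_0}_{L^2(\Omega_0,h)} < \varepsilon$; then $v_\varepsilon(\iota z)$ is close to $u_0(z)$ (again by Bergman's inequality near $z$) and $\norm{v_\varepsilon}_{L^2(\Omega_0,h)}$ is close to $1$. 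For this \emph{fixed} $v_\varepsilon$, monotone convergence applied to $\int_{\Omega_1}\abs{v_\varepsilon}^2_{h_j}\,d\mu = \int_{\Omega_0}\abs{v_\varepsilon}^2_h\,d\mu + \int_{\Omega_1\setminus\Omega_0}\abs{v_\varepsilon}^2_{h_j}\,d\mu$ shows the second integral decreases to $0$ as $h_j \searrow 0$ there (dominated by the $j=0$ term, which is finite since $v_\varepsilon \in \mathcal{H}^2(\Omega_1,h_0)$ — this requires knowing $v_\varepsilon$ lies in the relevant space, which follows because $h_0$ is a genuine $\C$-RKHS metric and $v_\varepsilon \in \mathcal{H}^2(\Omega_1,h)$ with $h_0 \le h$... actually we need $h_0$ integrable against $v_\varepsilon$; I would instead arrange $v_\varepsilon \in \mathcal{H}^2(\Omega_1, h_0)$ by density of that space, or note $\mathcal{H}^2(\Omega_1,h) \subseteq \mathcal{H}^2(\Omega_1,h_0)$ since $h_0 \le h$). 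Hence $\norm{v_\varepsilon}^2_{L^2(\Omega_1,h_j)} \to \norm{v_\varepsilon}^2_{L^2(\Omega_0,h)}$, so
$$
L \;\geq\; \limsup_{j\to\infty}\frac{\abs{\langle\sigma,v_\varepsilon(\iota z)\rangle}^2}{\norm{v_\varepsilon}^2_{L^2(\Omega_1,h_j)}} \;=\; \frac{\abs{\langle\sigma,v_\varepsilon(\iota z)\rangle}^2}{\norm{v_\varepsilon}^2_{L^2(\Omega_0,h)}}.
$$
Letting $\varepsilon \to 0$, the right side tends to $\abs{\langle\sigma,u_0(z)\rangle}^2 / \norm{u_0}^2_{L^2(\Omega_0,h)} = \langle\sigma\otimes\bar\sigma,K(z,\bar z)\rangle$, giving $L \geq \langle\sigma\otimes\bar\sigma,K(z,\bar z)\rangle$ and hence equality.

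The main obstacle I anticipate is the bookkeeping around the pullback identification $(\iota^{-1})^{*}(\sigma\otimes\bar\sigma)$ versus $\sigma\otimes\bar\sigma$ and making sure the extremal values on $\Omega_1$ and $\Omega_0$ are being compared on the same fiber $V_z \cong V_{\iota z}$ — this is purely notational but must be handled carefully so the inequalities point the right way. The second genuine subtlety is ensuring every competitor section one uses actually lies in the Bergman space defining the relevant kernel (the integrability of $v_\varepsilon$ against $h_0$, and the finiteness needed to invoke monotone convergence on $\Omega_1\setminus\Omega_0$); this is where the hypotheses that each $(\mu,h_j)$ is a $\C$-RKHS structure and that $h_j \searrow 0$ a.e.\ on $\Omega_1\setminus\Omega_0$ are essential, and I would state explicitly that $h_j \le h_0$ implies the chain of inclusions $\mathcal{H}^2(\Omega_1,h_0) \subseteq \mathcal{H}^2(\Omega_1,h_j)$ used throughout. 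The density assumption is exactly what rules out the pathological loss of mass and is used only in the second paragraph.
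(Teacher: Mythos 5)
Your proof is correct, but it takes a genuinely different route from the paper's. Both arguments open identically: the extremal characterization (Theorem~\ref{theorem-extermal-bergman}) gives monotonicity of $j\mapsto\left\langle\left(\iota^{-1}\right)^{*}\left(\sigma\otimes\bar{\sigma}\right),K_j\left(\iota z,\overbar{\iota z}\right)\right\rangle$ and the upper bound by $\left\langle\sigma\otimes\bar\sigma,K(z,\bar z)\right\rangle$. From there the paper switches to a functional-analytic argument: the kernels $K_j$ have uniformly bounded $L^2$-norm, so a subsequence converges weakly in $\mathcal{H}^2(\Omega_0,h)$; the Cauchy--Schwarz estimate shows the contribution from $\Omega_1\setminus\Omega_0$ vanishes in the limit; the density hypothesis then extends the reproducing identity for the weak limit $\mathfrak{K}$ from $\mathcal{H}^2(\Omega_1,h)$ to all of $\mathcal{H}^2(\Omega_0,h)$; and uniqueness of the reproducing kernel forces $\mathfrak{K}=K$. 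Your route avoids weak compactness entirely: you bound the monotone limit $L$ from below by approximating the extremizer $u_0\in\mathcal{H}^2(\Omega_0,h)$ with competitors $v_\varepsilon\in\mathcal{H}^2(\Omega_1,h)$ furnished by the density hypothesis, and you track the norm $\norm{v_\varepsilon}^2_{L^2(\Omega_1,h_j)}\searrow\norm{v_\varepsilon}^2_{L^2(\Omega_0,h)}$ via monotone convergence on $\Omega_1\setminus\Omega_0$. This is a cleaner, more elementary argument for the pointwise statement, whereas the paper's weak-limit identification delivers more (locally uniform convergence of the kernels), which it actually uses later in the proof of Theorem~\ref{bergman-kernel-nontrivial-theorem}.

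Two small corrections. First, the inclusion in your monotonicity step is written backwards: since $h_{j+1}\le h_j$ pointwise, the $h_{j+1}$-norm is \emph{smaller}, so it is $\mathcal{H}^2(\Omega_1,h_j)\subseteq\mathcal{H}^2(\Omega_1,h_{j+1})$, not the reverse. The correct direction is what you actually need: the competitor class is enlarging while the Rayleigh quotient of each fixed competitor is nondecreasing, so the supremum is nondecreasing. Second, the subtlety you flagged about whether $v_\varepsilon$ is admissible for the $h_j$-Bergman spaces (i.e.\ whether $\int_{\Omega_1\setminus\Omega_0}\abs{v_\varepsilon}^2_{h_{j}}\,d\mu<\infty$ for at least one, hence all larger, $j$) is real, but it traces back to an imprecision in the paper's own hypothesis: $h$ is only declared on $\overbar{\Omega}_0$, so the set $\mathcal{H}^2(\Omega_1,h)$ appearing in the density assumption is not unambiguously defined. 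The paper's proof carries the same implicit assumption (it integrates $\abs{f}^2_{h_j}$ over $\Omega_1-\Omega_0$ for $f\in\mathcal{H}^2(\Omega_1,h)$), so under the intended reading — in the application $h$ is in fact globally defined on $\Omega_1$ and $h_j\le h$ — your admissibility concern resolves exactly as you proposed, and the argument is sound.
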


\begin{proof}
Let $\iota : \Omega_0 \hookrightarrow \Omega_1$ be the inclusion map, and let $\sigma \in V^{*}_{\iota z}$. By Theorem \ref{theorem-extermal-bergman}, the sequence $\left\{\left\langle \left(\iota^{-1}\right)^{*}\left(\sigma \otimes \bar{\sigma}\right), K_j\left(\iota z,\overbar{\iota z}\right)\right\rangle\right\}_{j \geq 0}$ is an increasing sequence and
$$\left\langle \left(\iota^{-1}\right)^{*}\left(\sigma \otimes \bar{\sigma}\right), K_j\left(\iota z,\overbar{\iota z}\right)\right\rangle \leq \left\langle \sigma\otimes\bar{\sigma},K(z,\bar{z}) \right\rangle$$
for each $j$. Since
$$K_j(\iota z,\bar{\iota z}) = \int_{\Omega_1} \abs{K_j(\iota z, y)}^2_{h_j} d\mu(y)$$
by the uniqueness of Bergman kernels, it follows that $\{K_j\}_{j \geq 0}$ has uniformly bounded norm norm in $\mathcal{H}^2\left(\Omega_1,h_j\right)$. Therefore, the sequence $\{K_j\}_{j \geq 0}$ has a weakly convergent subsequence in $\mathcal{H}^2\left(\Omega_0,h\right)$. Let $\mathfrak{K}$ be the limit of the weakly convergent subsequence. If $f$ is in $\mathcal{H}^2\left(\Omega_1,h\right)$, then by the Cauchy-Schwarz inequality
$$\abs{\int_{\Omega_1 - \Omega_0} \left\langle f(y),K_j(x,y)\right\rangle_{h_j}d\mu(y)} \leq \norm{K_j}^2_{L^2\left(\Omega_1,h_j\right)}\int_{\Omega_1-\Omega_0}\abs{f(y)}^2_{h_j}d\mu(y),$$
and the latter converges to $0$ as $j \rightarrow +\infty$. Therefore, any weak limit $\mathfrak{K}$ satisfies
$$f(x) = \int_{\Omega_0} \left\langle f(y),\mathfrak{K}(x,\bar{y})\right\rangle_h d\mu(y)$$
for any $f \in \mathcal{H}^2\left(\Omega_1,h\right)$, and since $\mathcal{H}^2(\Omega_1,h)$ is dense in $\mathcal{H}^2(\Omega_0,h)$, the same reproducing property holds for any $f \in \mathcal{H}^2(\Omega_0,h)$. Since $\mathfrak{K}$ is holomorphic, $\mathfrak{K} = K$ by uniqueness and the limit is uniform on compact subsets of $\Omega_0$. In particular, for each $z \in \Omega_0$, $K_j(\iota z,\bar{\iota z})$ converges to $K(z,\bar{z})$ as $j \rightarrow +\infty$, which implies the desired result.
\end{proof}

\begin{proposition}\label{ramadanov-manifold-bergman-union-domains}
Let $\Omega$ be a bounded domain, with Borel measure $\mu$, in a complex manifold $X$, and let $V \rightarrow \Omega$ be a holomorphic vector bundle with Hermitian metric $h$ such that $h$ is locally bounded below and $(\mu,h)$ is a $\C$-RKHS structure. Let $\{\Omega_j\}_{j \geq 1}$ be an increasing family of subdomains with union equal to $\Omega$. Let $z$ be a fixed point in $\Omega_0$ and let $K_j$ and $K$ be the Bergman kernels for $\mathcal{H}^2\left(\Omega_j,\restr{\mu}{\Omega_j},h\right)$ and $\mathcal{H}^2(\Omega,\mu,h)$ respectively. Then, denoting by $\iota_j : \Omega_0 \hookrightarrow \Omega_j$ and $\iota : \Omega_0 \hookrightarrow \Omega$ the inclusion maps,
$$\forall \sigma \in V^{*}_{\iota_0 z} = V^{*}_{\iota_j z} : \left\langle\left(\iota_j^{-1}\right)^{*}\left(\sigma\otimes\bar{\sigma}\right),K_j\left(\iota_j z,\overbar{\iota_j z}\right)\right\rangle \searrow \left\langle \left(\iota^{-1}_0\right)^{*}\left(\sigma\otimes\sigma\right),K\left(\iota_0 z,\overbar{\iota_0 z}\right)\right\rangle.$$
\end{proposition}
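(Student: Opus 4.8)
The plan is to combine the extremal characterization of Bergman kernels (Theorem \ref{theorem-extermal-bergman}) with a normal-families argument, in the same spirit as the proof of Proposition \ref{prop-bergman-approx} but now for an increasing exhaustion of $\Omega$. Fix $z \in \Omega_0$ and $\sigma \in V^{*}_z$, and throughout suppress the purely notational identifications $V_{\iota_j z} \cong V_z$ and $(\iota_j^{-1})^{*}$, writing $\left\langle \sigma \otimes \bar{\sigma}, K_j(z,\bar{z})\right\rangle$ for the $j$-th quantity. Since $(\mu,h)$ is a $\C$-RKHS structure and $h$ is locally bounded below, the same holds on each subdomain, so Theorem \ref{theorem-extermal-bergman} applies on every $\Omega_j$ and on $\Omega$. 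First I would record monotonicity: if $\Omega' \subset \Omega''$, then restriction does not increase the $L^2$-norm, so feeding the (normalized) extremal section for $\Omega''$ into the supremum defining the kernel of $\Omega'$ gives $\left\langle \sigma \otimes \bar{\sigma}, K_{\Omega''}(z,\bar{z})\right\rangle \leq \left\langle \sigma \otimes \bar{\sigma}, K_{\Omega'}(z,\bar{z})\right\rangle$. Applying this to $\Omega_j \subset \Omega_{j+1}$ and to $\Omega_j \subset \Omega$, the sequence $\left\{\left\langle \sigma \otimes \bar{\sigma}, K_j(z,\bar{z})\right\rangle\right\}_j$ is non-increasing and bounded below by $\left\langle \sigma \otimes \bar{\sigma}, K(z,\bar{z})\right\rangle$; let $L$ denote its limit.

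Next I would manufacture a competitor for $K$ realizing $L$. Let $u_j \in \mathcal{H}^2\left(\Omega_j, h\right)$ be the extremal section from Theorem \ref{theorem-extermal-bergman}, so that $\norm{u_j}_{L^2(\Omega_j,h)} = 1$ and $\abs{\left\langle \sigma, u_j(z)\right\rangle}^2 = \left\langle \sigma \otimes \bar{\sigma}, K_j(z,\bar{z})\right\rangle$. For every relatively compact $W \Subset \Omega$ we have $W \subset \Omega_j$ for $j$ large, hence $\norm{u_j}_{L^2(W,h)} \leq 1$, so by Bergman's inequality (Proposition \ref{prop-bergman-regularity}) the tail of $\{u_j\}$ is uniformly bounded on $W$ with a constant depending only on $W$. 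Exhausting $\Omega$ by relatively compact opens and diagonalizing, I would extract a subsequence $u_{j_l}$ converging locally uniformly on $\Omega$ to a holomorphic section $u$ of $V \to \Omega$. Then $\abs{\left\langle\sigma, u(z)\right\rangle}^2 = \lim_l \abs{\left\langle\sigma, u_{j_l}(z)\right\rangle}^2 = L$, while for each $W \Subset \Omega$ Fatou's lemma gives $\int_W \abs{u}^2_h\, d\mu \leq \liminf_l \int_W \abs{u_{j_l}}^2_h\, d\mu \leq 1$; letting $W \nearrow \Omega$ shows $u \in \mathcal{H}^2(\Omega,h)$ with $\norm{u}_{L^2(\Omega,h)} \leq 1$. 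The extremal characterization then yields $\left\langle \sigma \otimes \bar{\sigma}, K(z,\bar{z})\right\rangle \geq \abs{\left\langle\sigma,u(z)\right\rangle}^2 / \norm{u}^2_{L^2(\Omega,h)} \geq L$, which together with the lower bound above forces $L = \left\langle \sigma \otimes \bar{\sigma}, K(z,\bar{z})\right\rangle$. As the whole sequence is monotone, it converges (not just along the subsequence), which is the asserted $\searrow$.

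I expect the main obstacle to be the normal-families step, precisely because the $\Omega_j$ need not be relatively compact in $\Omega$, so the extremal sections $u_j$ genuinely live on distinct, non-compactly-contained domains. One must set up the diagonal extraction over a fixed exhaustion of $\Omega$ by relatively compact opens, invoke Bergman's inequality with constants depending only on the compact set (not on $j$), and then pass to the limit in the unit-norm constraint through Fatou rather than through any uniform-on-$\Omega$ estimate. As an alternative one could imitate Proposition \ref{prop-bergman-approx} more literally by extracting a weakly convergent subsequence of $\left\{\restr{u_j}{\Omega_0}\right\}$ in $\mathcal{H}^2(\Omega_0,h)$ and identifying its limit via the reproducing property, or, as the text indicates, simply cite the weighted Ramadanov theorem of \citep{pasternak-winiarski-wojcicki-weighted-ramadanov}, of which this is a special case; the locally-uniform route above seems the most self-contained, since here --- unlike in the decreasing case --- no density hypothesis is needed.
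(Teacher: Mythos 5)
Your proof is correct. Note first that the paper itself does not prove this proposition: it only remarks that it ``essentially follows from special cases of the generalization of Ramadanov's theorem in \citep{pasternak-winiarski-wojcicki-weighted-ramadanov},'' so you are supplying an argument the paper omits. What you give is the standard normal-families proof of the increasing-domain Ramadanov theorem, and all the steps check out: the domain-monotonicity of $\left\langle\sigma\otimes\bar\sigma,K_{\Omega'}(z,\bar z)\right\rangle$ via restriction of competitors, the existence of unit-norm extremal sections $u_j$ from Theorem \ref{theorem-extermal-bergman}, the $j$-independent local sup bound (correctly obtained by fixing $W\Subset W'\Subset\Omega$ and applying Proposition \ref{prop-bergman-regularity} on $W'$ once $W'\subset\Omega_j$), Montel plus diagonal extraction, Fatou to get $\norm{u}_{L^2(\Omega,h)}\leq 1$, and the final squeeze identifying $L$ with $\left\langle\sigma\otimes\bar\sigma,K(z,\bar z)\right\rangle$; monotonicity then upgrades subsequential convergence to convergence of the whole sequence. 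You are also right that, in contrast to Proposition \ref{prop-bergman-approx}, no density hypothesis is needed here. Your route differs from the paper's Proposition \ref{prop-bergman-approx} template (weak $L^2$ compactness plus the reproducing property) in using locally uniform convergence instead; both work, but the locally uniform route is the more self-contained one since it avoids having to justify passage to the limit in the reproducing identity. The only loose ends are degenerate ones --- if $\mathcal{H}^2(\Omega_j,h)$ has no section pairing nontrivially with $\sigma$ at $z$ then all quantities vanish by monotonicity and the claim is trivial --- and these do not affect the argument.
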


\begin{proposition}\label{ramadanov-manifold-bergman}
Let $\Omega$ be a bounded domain, with Borel measure $\mu$, in a complex manifold $X$, and let $V \rightarrow \Omega$ be a holomorphic vector bundle. Suppose that $\{h_j\}_{j \geq 1}$ is a sequence of Hermitian metrics that are increasing to a metric $h$. Suppose that for each $j$, $(\mu,h_j)$ is a $\C$-RKHS structure, and that $(\mu,h)$ is also a $\C$-RKHS structure. Let $z$ be a fixed point in $\Omega$ and let $K_j$ and $K$ be the Bergman kernel for $\mathcal{H}^2(\mu,h_j)$ and $\mathcal{H}^2(\mu,h)$ respectively. Then,
$$\forall \sigma \in V^{*}_z : \left\langle \sigma \otimes \bar{\sigma}, K_j(z,\bar{z})\right\rangle \searrow \left\langle \sigma \otimes \bar{\sigma}, K(z,\bar{z})\right\rangle.$$
\end{proposition}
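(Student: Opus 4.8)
The plan is to mirror the proof of Proposition \ref{prop-bergman-approx}, using the extremal characterization of Theorem \ref{theorem-extermal-bergman} together with the monotonicity $h_j \nearrow h$.

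First I would dispose of the easy half. Since $h_1 \le h_2 \le \cdots \le h$ as Hermitian forms, for every holomorphic section $u$ the quantity $\|u\|^2_{L^2(\mu,h_j)} = \int_\Omega |u|^2_{h_j}\,d\mu$ is nondecreasing in $j$ and, by monotone convergence, increases to $\|u\|^2_{L^2(\mu,h)}$; in particular $\mathcal{H}^2(\mu,h) \subseteq \mathcal{H}^2(\mu,h_{j+1}) \subseteq \mathcal{H}^2(\mu,h_j)$. Substituting this into the extremal formula of Theorem \ref{theorem-extermal-bergman} shows that $j \mapsto \langle\sigma\otimes\bar\sigma,K_j(z,\bar z)\rangle$ is nonincreasing: both the shrinking of the admissible family and the growth of the denominator push the supremum down. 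Restricting the same supremum to $u \in \mathcal{H}^2(\mu,h)$ and using $\|u\|_{L^2(\mu,h_j)} \le \|u\|_{L^2(\mu,h)}$ gives the lower bound $\langle\sigma\otimes\bar\sigma,K_j(z,\bar z)\rangle \ge \langle\sigma\otimes\bar\sigma,K(z,\bar z)\rangle$. Hence the sequence converges, and it suffices to show the limit does not exceed $\langle\sigma\otimes\bar\sigma,K(z,\bar z)\rangle$; we may assume $\sigma\ne 0$ and that these quantities are positive, else there is nothing to prove.

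For the reverse inequality I would use extremal sections. By Theorem \ref{theorem-extermal-bergman}, for each $j$ pick $u_j \in \mathcal{H}^2(\mu,h_j)$ with $\|u_j\|_{L^2(\mu,h_j)} = 1$ and $\langle\sigma\otimes\bar\sigma,K_j(z,\bar z)\rangle = |\langle\sigma,u_j(z)\rangle|^2$. Since $h_1 \le h_j$ we have $\|u_j\|^2_{L^2(\mu,h_1)} \le \|u_j\|^2_{L^2(\mu,h_j)} = 1$, so $\{u_j\}$ is a bounded sequence in the Hilbert space $\mathcal{H}^2(\mu,h_1)$; extract a weakly convergent subsequence $u_{j_k} \rightharpoonup v$. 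Because $(\mu,h_1)$ is a $\C$-RKHS structure, Proposition \ref{prop-bergman-regularity} makes the point-evaluation functionals continuous on $\mathcal{H}^2(\mu,h_1)$ and shows $\{u_{j_k}\}$ is locally uniformly bounded, hence a normal family; testing the locally uniform limit of any sub-subsequence against these functionals forces it to equal $v$, so $u_{j_k} \to v$ locally uniformly, and in particular $u_{j_k}(z) \to v(z)$.

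The crux is to verify that $v \in \mathcal{H}^2(\mu,h)$ with $\|v\|_{L^2(\mu,h)} \le 1$. I would exhaust $\Omega$ by relatively compact subdomains $\Omega' \Subset \Omega$: for fixed $N$ and all $j_k \ge N$ one has $|u_{j_k}|^2_{h_N} \le |u_{j_k}|^2_{h_{j_k}}$, whence $\int_{\Omega'} |u_{j_k}|^2_{h_N}\,d\mu \le 1$, and letting $k\to\infty$ (uniform convergence on $\overline{\Omega'}$, with $\mu(\Omega')<\infty$; equivalently, weak lower semicontinuity of the norm under the bounded restriction map $\mathcal{H}^2(\mu,h_1)\to L^2(\Omega',\mu,h_N)$) gives $\int_{\Omega'} |v|^2_{h_N}\,d\mu \le 1$. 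Then $\Omega' \nearrow \Omega$ and afterwards $N\to\infty$, each step being a monotone limit of nonnegative integrands, yield $\int_\Omega |v|^2_h\,d\mu \le 1$. Finally the extremal characterization of Theorem \ref{theorem-extermal-bergman} applied to $v$ gives $|\langle\sigma,v(z)\rangle|^2 \le \|v\|^2_{L^2(\mu,h)}\,\langle\sigma\otimes\bar\sigma,K(z,\bar z)\rangle \le \langle\sigma\otimes\bar\sigma,K(z,\bar z)\rangle$, so $\lim_k \langle\sigma\otimes\bar\sigma,K_{j_k}(z,\bar z)\rangle = |\langle\sigma,v(z)\rangle|^2 \le \langle\sigma\otimes\bar\sigma,K(z,\bar z)\rangle$; since the whole sequence is monotone, it converges to $\langle\sigma\otimes\bar\sigma,K(z,\bar z)\rangle$. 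I expect the main obstacle to be exactly this passage to the limit establishing $\|v\|_{L^2(\mu,h)}\le 1$: because $\mu$ need not be finite on $\Omega$, the double limit over the exhaustion and over the metrics must be arranged so that only monotone limits (or applications of Fatou) of nonnegative integrands are invoked, rather than a single dominated-convergence step.
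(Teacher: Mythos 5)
Your argument is correct. Note first that the paper does not actually prove this proposition: it states that Propositions \ref{ramadanov-manifold-bergman-union-domains} and \ref{ramadanov-manifold-bergman} ``essentially follow from special cases of the generalization of Ramadanov's theorem'' in the cited reference, so there is no in-paper proof to compare against; what you have written is a self-contained substitute modeled on the weak-compactness argument the paper does give for Proposition \ref{prop-bergman-approx}. The structure is sound: monotonicity and the lower bound $\left\langle\sigma\otimes\bar\sigma,K_j(z,\bar z)\right\rangle \geq \left\langle\sigma\otimes\bar\sigma,K(z,\bar z)\right\rangle$ follow immediately from the extremal characterization of Theorem \ref{theorem-extermal-bergman}; the reverse inequality via extremal sections $u_j$, uniformly bounded in the \emph{fixed} Hilbert space $\mathcal{H}^2(\mu,h_1)$ (a sensible replacement for the paper's use of $\mathcal{H}^2(\Omega_0,h)$, since here the domain is fixed and the metric varies), is the right move, and the $\C$-RKHS hypothesis for $(\mu,h_1)$ is exactly what makes $u_{j_k}(z)\to v(z)$ legitimate. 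Your handling of the crux $\|v\|_{L^2(\mu,h)}\le 1$ is also fine, and in fact simpler than you suggest: Fatou's lemma applied to $|u_{j_k}|^2_{h_N}\to|v|^2_{h_N}$ pointwise on $\Omega'$ already gives $\int_{\Omega'}|v|^2_{h_N}\,d\mu\le 1$ with no finiteness assumption on $\mu(\Omega')$, after which the two monotone limits in $\Omega'$ and $N$ close the argument; so the ``main obstacle'' you flag dissolves. Two cosmetic remarks: you should say explicitly that the degenerate case $\left\langle\sigma\otimes\bar\sigma,K_{j_0}(z,\bar z)\right\rangle=0$ for some $j_0$ forces all subsequent terms and the limit kernel to vanish (so the maximizers $u_j$ of Theorem \ref{theorem-extermal-bergman} are only invoked when they exist nontrivially), and when you apply the extremal characterization to $v$ at the end you should separate the case $v=0$, where the conclusion is trivial. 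Neither is a real gap.
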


\begin{proposition}\label{upper-semilemma-bergman}
Let $X = \{(t, z) \in \C^m \times Y:  \rho(t, z) < 0\}$ where $Y$ is a Stein manifold and $\rho$ is plurisubharmonic near the closure of $X$. Let $V \rightarrow X$ be a holomorphic vector bundle equipped with a Hermitian metric $h$ that is smooth and locally bounded below near the closure of $X$. Let $V^{[t]} := \restr{V}{X_t}$. Then, for fixed $z$ and $\sigma \in \left(V^{[t]}_z\right)^{*}$, the function $t \mapsto \left\langle \sigma \otimes \bar{\sigma}, K_t(z,\bar{z})\right\rangle$ is upper semicontinuous.
\end{proposition}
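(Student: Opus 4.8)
The plan is to reduce the assertion, by way of the extremal characterization of the Bergman kernel (Theorem~\ref{theorem-extermal-bergman}), to a normal-families argument for the associated extremal sections. Fix $t_0 \in \C^m$, a point $z \in X_{t_0}$, and $\sigma \in \left(V^{[t_0]}_z\right)^{*}$, and choose once and for all a local holomorphic frame for $V \rightarrow X$ on a neighborhood of $(t_0,z)$ in $\C^m \times Y$; this makes the pairings $\left\langle \sigma \otimes \bar\sigma, K_t(z,\bar z)\right\rangle$ well defined for $t$ near $t_0$, with the value at $t_0$ — and the upper-semicontinuity assertion at $t_0$ — independent of the choice. Write $f(t) := \left\langle \sigma \otimes \bar\sigma, K_t(z,\bar z)\right\rangle \ge 0$. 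Since $z$ lies in the open set $X_{t_0}$, we may fix a compact neighborhood $K'$ of $z$ with $K' \subset X_{t_0}$; by upper semicontinuity of $\rho$ we have $K' \subset X_t$ for all $t$ close to $t_0$, and Bergman's inequality (Proposition~\ref{prop-bergman-regularity}) applied on $\mathrm{int}(K')$ — with a constant uniform in $t$, since the metrics $h^{[t]}$ are mutually comparable on $K'$ — shows $f$ is bounded near $t_0$. It therefore suffices to prove $L := \limsup_{t \to t_0} f(t) \le f(t_0)$; this is trivial when $L = 0$, so assume $L > 0$ and pick $t_\nu \to t_0$ with $f(t_\nu) \to L$.

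For $\nu$ large, $f(t_\nu) > 0$, so $\mathcal{H}^2\left(X_{t_\nu}, h^{[t_\nu]}\right) \ne \{0\}$ and Theorem~\ref{theorem-extermal-bergman} furnishes a section $u_\nu \in \mathcal{H}^2\left(X_{t_\nu}, h^{[t_\nu]}\right)$ with $\norm{u_\nu}_{L^2} = 1$ and $\abs{\left\langle \sigma, u_\nu(z)\right\rangle}^2 = f(t_\nu)$. The crucial geometric input is that the domains $X_{t_\nu}$ exhaust $X_{t_0}$ from the inside: every compact $K \subset X_{t_0}$ satisfies $K \subset X_{t_\nu}$ for $\nu \gg 0$, again by upper semicontinuity of $\rho$ and compactness. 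Enlarging $K$ slightly to a compact $K_1$ with $K \subset \mathrm{int}(K_1) \subset K_1 \subset X_{t_0}$, Proposition~\ref{prop-bergman-regularity} applied on $\mathrm{int}(K_1)$ gives $\sup_K \abs{u_\nu}^2_{h^{[t_\nu]}} \le C_K \int_{K_1}\abs{u_\nu}^2_{h^{[t_\nu]}}\, dV_g \le C_K$ for $\nu \gg 0$, with $C_K$ independent of $\nu$ (the structures $\left(dV_g|_{K_1}, h^{[t_\nu]}|_{K_1}\right)$ being uniformly comparable because $h$ is smooth in $(t,z)$ and the relevant parameters lie in a compact set). Thus, in the chosen local trivializations, $\{u_\nu\}$ is locally uniformly bounded on $X_{t_0}$, and by Montel's theorem together with a diagonal argument over an exhaustion of $X_{t_0}$ a subsequence converges, uniformly on compact subsets of $X_{t_0}$, to a holomorphic section $u$ of $V^{[t_0]}$; the local limits patch into a global section because the transition functions of $V$ are continuous in $(t,z)$ and $t_\nu \to t_0$.

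It remains to see that $u$ is an admissible competitor in the extremal problem for $K_{t_0}$. For every compact $K \subset X_{t_0}$ the integrands $\abs{u_\nu}^2_{h^{[t_\nu]}}$ converge to $\abs{u}^2_{h^{[t_0]}}$ uniformly on $K$ (again because $h$ is continuous in $(t,z)$ and $t_\nu \to t_0$), and $K \subset X_{t_\nu}$ eventually, so $\int_K \abs{u}^2_{h^{[t_0]}}\, dV_g = \lim_\nu \int_K \abs{u_\nu}^2_{h^{[t_\nu]}}\, dV_g \le \liminf_\nu \norm{u_\nu}^2_{L^2\left(X_{t_\nu}, h^{[t_\nu]}\right)} = 1$; letting $K \uparrow X_{t_0}$ gives $u \in \mathcal{H}^2\left(X_{t_0}, h^{[t_0]}\right)$ with $\norm{u}_{L^2} \le 1$. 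Moreover $\left\langle \sigma, u(z)\right\rangle = \lim_\nu \left\langle \sigma, u_\nu(z)\right\rangle$, hence $\abs{\left\langle \sigma, u(z)\right\rangle}^2 = L > 0$ and in particular $u \not\equiv 0$. Applying Theorem~\ref{theorem-extermal-bergman} at $t_0$ to this $u$ then yields
$$ f(t_0) = \left\langle \sigma \otimes \bar\sigma, K_{t_0}(z,\bar z)\right\rangle \ge \dfrac{\abs{\left\langle \sigma, u(z)\right\rangle}^2}{\norm{u}^2_{L^2\left(X_{t_0}, h^{[t_0]}\right)}} \ge \abs{\left\langle \sigma, u(z)\right\rangle}^2 = L, $$
which is the desired upper semicontinuity at $t_0$. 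The main obstacle is the one met in the second paragraph: producing a mean-value (Bergman) estimate with a constant uniform in $\nu$ over the moving family $\{X_{t_\nu}\}$, so that the extremal sections form a normal family, together with the bookkeeping needed to compare and patch sections of the varying bundles $V^{[t_\nu]}$; once this is in place the passage to the limit is a routine application of Montel's theorem and Fatou's lemma. One could instead try to sandwich each $X_{t_\nu}$ between fixed subdomains and invoke Proposition~\ref{ramadanov-manifold-bergman-union-domains}, but that proposition is stated for bounded domains, whereas here $X_{t_0}$ need not be bounded, so the direct normal-families argument is preferable.
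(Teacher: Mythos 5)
Your proof is correct and takes essentially the same approach as the paper's: both proofs control the extremal quantity via Bergman's inequality over compacts eventually contained in all nearby fibers, extract a limiting holomorphic section by a compactness argument, and close with the extremal characterization of the Bergman kernel (Theorem~\ref{theorem-extermal-bergman}). The only technical variant is that you run a Montel normal-families argument on the unit-norm extremal sections $u_\nu$ and pass to the limit with local uniform convergence plus monotone exhaustion, whereas the paper takes weak $L^2$ limits of the kernel sections $K_s(\cdot,\bar z)$ and uses weak lower semicontinuity of the norm; your version is, if anything, more carefully worded about the fiber identifications needed to make $f(t)$ well defined for $t\ne t_0$.
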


\begin{proof}
Consider a point $t$ and let $s$ be a nearby point tending to $t$. We may choose $\varepsilon > 0$ so that all the fibers $X_s$ are contained in the open set $V_{\varepsilon} := \{(t,z) \in \C^m \times Y : \rho(t,z) < \varepsilon\}$. Note that any compact subset of $X_t$ is contained in all $X_s$ for $s$ sufficiently close to $t$. Let $K_s(\cdot,\bar{z})$ denote the Bergman kernel of $X_s$ for a fixed point $z$. Let $\sigma \in \left(V^{[s]}_z\right)^{*}$. Since the domains $X_s$ all contain a fixed open neighborhood of $z$, the $L^2$-norms of $\left\langle \sigma \otimes \bar{\sigma}, K_s(z,\bar{z})\right\rangle$ are bounded. Therefore, any sequence of $\left\langle \sigma \otimes \bar{\sigma}, K_s(z,\bar{z})\right\rangle$ has a subsequence that is weakly convergent on any compact subset of $X_t$. The $L^2$-norm of any weak limit $\mathfrak{K}$ cannot exceed the $\liminf$ of the $L^2$-norms of $\left\langle \sigma \otimes \bar{\sigma}, K_s(z,\bar{z})\right\rangle$ over $X_s$. By Theorem \ref{theorem-extermal-bergman}, it follows that for any $\sigma \in \left(V^{[s]}_z\right)^{*}$ and $\tau \in \left(V^{[t]}_z\right)^{*}$
$$\limsup_{s \rightarrow t} \left\langle \sigma \otimes \bar{\sigma}, K_s(z,\bar{z})\right\rangle \leq \left\langle \tau \otimes \bar{\tau}, K_t(z,\bar{z})\right\rangle,$$
which completes the proof.
\end{proof}

\subsection{Runge approximation theorem}
We state here a Runge approximation theorem for holomorphic sections of a vector bundle over a complex manifold which can be proved using $L^2$-estimates for the $\dbar$-operator.

\begin{proposition}\label{runge-l2-approx-prop}
Let $Y$ be a Stein manifold, and let $\Omega_0$ and $\Omega_1$ be smoothly bounded pseudoconvex domains in $Y$ with $\Omega_0$ relatively compact $\Omega_1$. Assume there is a smooth plurisubharmonic function $\rho$ in $\overbar{\Omega}_1$ such that $\Omega_0 = \{z \in \Omega_1 : \rho(z) < 0\}$. Let $V \rightarrow \Omega_1$ be a holomorphic vector bundle, and let $h$ be a Hermitian metric for $V \rightarrow \Omega_1$. Then holomorphic sections in $L^2(\Omega_1,h)$ are dense in the space of holomorphic sections in $L^2\left(\Omega_0,\restr{h}{\Omega_0}\right)$.
\end{proposition}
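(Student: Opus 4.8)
The plan is to prove the statement by combining the Oka--Weil approximation theorem on the Stein manifold $\Omega_1$ with H\"ormander's $L^2$-estimates for $\dbar$, the plurisubharmonic function $\rho$ playing a double role: it forces the sublevel sets cutting out $\Omega_0$ to be holomorphically convex in $\Omega_1$, and it supplies the weight for the $\dbar$-correction.

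First I would reduce to approximating restrictions of \emph{global} holomorphic sections. Since $\Omega_1$ is a relatively compact pseudoconvex domain in the Stein manifold $Y$, it is itself Stein, so the $\holo(\Omega_1)$-hull of a compact set equals its plurisubharmonic hull; testing against $\rho$ then shows that the hull of $\{\rho\le c\}$ is again contained in $\{\rho\le c\}$, i.e. $\{\rho\le c\}$ is holomorphically convex in $\Omega_1$ for every $c$. Given $f\in\HH\left(\Omega_0,\restr{h}{\Omega_0}\right)$, for $c<0$ the set $\{\rho\le c\}$ is a compact subset of $\Omega_0$, so $f$ is holomorphic near it, and the bundle version of the Oka--Weil theorem on $\Omega_1$ (a consequence of Cartan's Theorems A and B) gives $g\in\holosections(\Omega_1,V)$ approximating $f$ uniformly on $\{\rho\le c\}$ to any prescribed accuracy. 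It therefore suffices to approximate an arbitrary $g\in\holosections(\Omega_1,V)$ in $L^2(\Omega_0,h)$ by elements of $\HH(\Omega_1,h)$.

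For that, fix such a $g$ and a small $\delta>0$ with $\{\rho\le\delta\}\Subset\Omega_1$, and pick $\chi\in\smooth(\Omega_1)$ with $\chi\equiv1$ on a neighborhood of $\{\rho\le\delta/2\}\supseteq\overbar{\Omega}_0$ and $\chi\equiv0$ on $\{\rho\ge\delta\}$. Then $\chi g$, extended by zero, is a smooth compactly supported section of $V\to\Omega_1$ and $\dbar(\chi g)=\dbar\chi\wedge g$ is supported in the compact set $\{\delta/2\le\rho\le\delta\}$, which lies \emph{outside} $\overbar{\Omega}_0$. For $\lambda>0$ I would solve $\dbar u_\lambda=\dbar\chi\wedge g$ on $\Omega_1$ via H\"ormander's theorem with the plurisubharmonic weight $e^{-\lambda\rho}h$ (first adding $\varepsilon$ times a strictly plurisubharmonic exhaustion of $\Omega_1$ and letting $\varepsilon\to0$, which is harmless because the datum is compactly supported), obtaining
$$\int_{\Omega_1}\abs{u_\lambda}^2_h\,e^{-\lambda\rho}\,dV_g\ \lesssim\ \int_{\{\delta/2\le\rho\le\delta\}}\abs{\dbar\chi\wedge g}^2_h\,e^{-\lambda\rho}\,dV_g\ \le\ e^{-\lambda\delta/2}\int_{\Omega_1}\abs{\dbar\chi\wedge g}^2_h\,dV_g .$$
The crucial point is that $\overbar{\Omega}_1$ is compact, so $\rho$ is bounded and $e^{-\lambda\rho}$ is bounded below on $\Omega_1$; hence $u_\lambda\in L^2(\Omega_1,h)$ and $F_\lambda:=\chi g-u_\lambda\in\HH(\Omega_1,h)$. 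Since $\chi\equiv1$ on $\Omega_0=\{\rho<0\}$ we get $\restr{F_\lambda}{\Omega_0}=g-u_\lambda$, and since $e^{-\lambda\rho}\ge1$ on $\Omega_0$,
$$\norm{F_\lambda-g}_{L^2(\Omega_0,h)}^2=\norm{u_\lambda}_{L^2(\Omega_0,h)}^2\le\int_{\Omega_1}\abs{u_\lambda}^2_h\,e^{-\lambda\rho}\,dV_g\ \xrightarrow[\lambda\to\infty]{}\ 0 .$$

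The step I expect to be the main obstacle is making the two halves $L^2$-compatible: the Oka--Weil approximants $g$ are controlled only \emph{uniformly} on the compacta $\{\rho\le c\}\Subset\Omega_0$, and a priori not in $L^2$ on the collar $\{c<\rho<0\}$ adjacent to $\partial\Omega_0$, so one must argue that uniform approximation on the exhausting family $\{\rho\le c\}$, $c\uparrow0$, can be promoted to $L^2(\Omega_0,h)$-approximation before applying the $\dbar$-correction. This is where the smooth boundedness of $\Omega_0$ and the defining-function structure $\Omega_0=\{\rho<0\}$ are genuinely used — together with the Bergman inequality of Proposition \ref{prop-bergman-regularity}, which converts smallness in $L^2$ on the interior compacta into smallness in sup-norm there — and it is also what allows one to fix $\delta>0$ with $\{\rho\le\delta\}\Subset\Omega_1$. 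Once this promotion is carried out, chaining it with the correction $g\mapsto F_\lambda$ above produces sections of $\HH(\Omega_1,h)$ converging to $f$ in $L^2(\Omega_0,h)$, which is the assertion.
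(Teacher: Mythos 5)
The paper offers no proof of this proposition; it is cited from Demailly's $L^2$-estimates paper, where the argument is carried out entirely at the $L^2$ level (via a $\dbar$-solve for the original section $f$ with a carefully chosen family of weights, or equivalently via a Hahn--Banach/orthogonality argument) and never passes through Oka--Weil. So there is no internal proof to compare against; I can only assess the proposal on its own terms.

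The step you flag as the expected obstacle is, in fact, a genuine gap, and the Bergman inequality you invoke to close it runs in the \emph{wrong} direction: Proposition \ref{prop-bergman-regularity} converts $L^2$-smallness on $\Omega_0$ into sup-norm smallness on interior compacta, whereas you need the converse implication. Concretely, as $c\uparrow 0$ Oka--Weil produces approximants $g_c\in\holosections(\Omega_1,V)$ that are uniformly close to $f$ on $\{\rho\le c\}$, but nothing controls $\norm{g_c}_{L^2(\{c<\rho<0\},h)}$; the $g_c$ may blow up in the collar as $c\to 0$, so you cannot conclude $\norm{g_c-f}_{L^2(\Omega_0,h)}\to 0$. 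The H\"ormander correction then only brings you close to $g_c$, not to $f$. This is precisely why the standard argument avoids the Oka--Weil detour: one cuts off $f$ itself with $\chi_c\equiv 1$ on $\{\rho\le -c\}$ and supported in $\Omega_0$, solves $\dbar u=\dbar\chi_c\wedge f$ with a weight of the form $\lambda\phi(\rho)+\psi$ where $\phi$ is convex, vanishing on $(-\infty,-c]$, and the quantitative estimate is arranged so the error goes to zero as $c\to0$ and $\lambda\to\infty$ in a coupled way; there is no uniform-to-$L^2$ promotion step.

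A second, quieter issue: the $\dbar$-solve requires the curvature of the twisted metric $h\,e^{-\lambda\rho-\varepsilon\psi}$ to dominate the data with a constant independent of $\lambda$, which forces $\Theta(h)+\varepsilon\,\ddbar\psi\otimes\mathrm{Id}\ge 0$ for a \emph{fixed} $\varepsilon$. Since the hypotheses place no lower bound on $\Theta(h)$ near $\partial\Omega_1$, no such $\psi$ need exist, and ``letting $\varepsilon\to 0$'' is not available. In fact the proposition as written is false without some curvature assumption: with $\Omega_1=\mathbb{D}$, $\Omega_0=\mathbb{D}_{1/2}$, $V$ trivial, and $h=(1-\abs{z}^2)^{-N}$ for $N\ge 1$, one has $\HH(\Omega_1,h)=\{0\}$ (if $f\not\equiv 0$, the circular means $M_2(r,f)$ are nondecreasing and eventually bounded below, so $\int_{\mathbb{D}}\abs{f}^2(1-\abs{z}^2)^{-N}\,dA=\infty$), while $\HH(\Omega_0,h)$ is infinite dimensional. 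So the statement implicitly needs, e.g., $\Theta(h)\otimes\mathrm{Id}+\ddbar\psi_0\ge_{\mathrm{Nak}}0$ for some $\psi_0$ smooth on $\overbar{\Omega}_1$ — a hypothesis present in the cited reference but omitted here. This is a flaw of the statement itself as much as of your proposal, but your argument does silently rely on it.
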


See \citep{demailly-estimation-l2}, for example, for a proof.

\section{Griffiths positivity for trivial families of unbounded Stein manifolds}\label{trivial-families-general-griffiths-positivity}

We now proceed to the proof of Theorem \ref{griffiths-positivity-general-stein}.

\begin{proof}
Since the result is local, we may assume that $U$ is bounded. Let $X$ be an unbounded Stein manifold. Let us denote
$$\norm{\xi}^2_{*,h^{[t]},X} := \sup_{f^{[t]} \in \HH_t-\{0\}} \dfrac{\abs{\langle \xi_t,f^{[t]} \rangle}^2}{\norm{f^{[t]}}^2_{h^{[t]},X}},$$
for a holomorphic section $\xi$ of $E^{*}_h$.\\

Let $\xi$ be an arbitrary holomorphic section of $E_h$. If $\xi \equiv 0$, then $\norm{\xi}^2_{*,h^{[t]},X} = 0$ and so the function $U \ni t \mapsto \log\left(\norm{\xi}^2_{*,h^{[t]},X}\right)$ is identically $-\infty$. We thus assume for the remainder of the proof that $\xi$ is not identically $0$.\\

Our goal is to shows that $U \ni t \mapsto \norm{\xi}^2_{*,h^{[t]},X}$ is strictly plurisubharmonic or plurisubharmonic depending on the twisted curvature assumption.\\

Since $X$ is a Stein manifold, we may express $X$ as $X = \bigcup_{j \geq 1} X_j$ where $\{X_j\}_{j \geq 1}$ is an increasing sequence of relatively compact  such that for each $j$, $X_j$ has compact closure in $X_{j+1}$. Let $\mathcal{P}_{U \times X}$ denote the Bergman projection of $L^2(U \times X,h)$ onto $\HH(U \times X,h)$ and let $\mathcal{P}^{\perp}_{U \times X}$ denote its orthogonal complement. For each $j$, let $\chi_j : X \rightarrow [0,1]$ be a smooth function supported on $X_{j+1}$ that is identically $1$ on $\overbar{X}_{j}$. In addition, let $E_{(j+1),h}$ be the bundle whose fiber over $t \in U$ is $\HH\left(X_{j+1},h^{[t]}\right) =: \HH_{t,(j+1)}$ and let $\hat{\chi}_j := \chi_j \circ \pi_X : U \times X \rightarrow X \rightarrow [0,1]$.

Since $\{t\} \times X \cong X$ and $\{t\} \times X_j \cong X_j$ for each fixed $t \in U$ and for each $j$, we will abusively denote $\HH\left(\{t\} \times X,h^{[t]}\right)$ and $\HH\left(\{t\} \times X_{j+1},h^{[t]}\right)$ by $\HH_t$ and $\HH_{t,(j+1)}$, respectively. Thus, while we take the fibers of $E_h$ and $E_{(j+1),h}$ to be $\HH_t$ and $\HH_{t,(j+1)}$, respectively, we are really thinking of $\HH\left(\{t\} \times X,h^{[t]}\right)$ and $\HH\left(\{t\} \times X_{j+1},h^{[t]}\right)$ respectively.

For any section $f \in \holosections\left(E_{(j+1),h}\right)$,
$\mathcal{P}_{U \times X}\left(\hat{\chi}_j f\right) \in \holosections\left(U \times X,\pi^{*}_X V\right) \cap L^2\left(U \times X,h\right).$ 
Therefore, the formula
$$\forall f \in \holosections(E_{(j+1),h}) : \left\langle \xi^{(j)}_t,f^{[t]}\right\rangle := \left\langle \xi_t,\restr{\mathcal{P}_{U \times X}\left(\hat{\chi}_j f\right)}{\{t\} \times X}\right\rangle,$$
defines a holomorphic section $\xi^{(j)} \in E^{*}_{(j+1),h}$.

Let us denote the dual square-norm of $\xi^{(j)}_t$ over $\left(\HH_{t,(j+1)}\right)^{*}$ by $\norm{\xi^{(j)}}^2_{*,h^{[t]},X_{j+1}}$ and let $f \in \Gamma_{\mathcal{O}}\left(E_h\right)$ so that $f^{[t]} \in \HH_t-\{0\}$. Then, we have the following estimates.
\begin{align*}
\norm{\xi^{(j)}}^2_{*,h^{[t]},X_{j+1}} &\geq \dfrac{\abs{\left\langle\xi_t,\restr{\mathcal{P}_{U \times X}\left(\hat{\chi}_j f\right)}{\{t\} \times X}\right\rangle}^2}{\norm{\restr{\mathcal{P}_{U \times X}\left(\hat{\chi}_j f\right)}{\{t\} \times X}}^2_{h^{[t]},X_{j+1}}}\\
&\geq \dfrac{\abs{\left\langle \xi_t,\restr{\mathcal{P}_{U \times X}\left(\left(\hat{\chi}_j-1\right) f\right)}{\{t\} \times X} + f^{[t]}\right\rangle}^2}{\norm{f^{[t]}}^2_{h^{[t]},X}}\\
&= \dfrac{\abs{\left\langle \xi_t,f^{[t]}\right\rangle + \left\langle\xi_t,\restr{\mathcal{P}_{U \times X}\left(\left(\hat{\chi}_j-1\right) f\right)}{\{t\} \times X}\right\rangle}^2}{\norm{f^{[t]}}^2_{h^{[t]},X}}.
\end{align*}

Now
\begin{align*}
&\abs{\left\langle \xi_t,f^{[t]}\right\rangle + \left\langle\xi_t,\restr{\mathcal{P}_{U \times X}\left(\left(\hat{\chi}_j-1\right) f\right)}{\{t\} \times X}\right\rangle}^2\\
&= \abs{\left\langle \xi_t,f^{[t]}\right\rangle}^2 + 2\mathrm{Re}\left[\left\langle \xi_t,f^{[t]}\right\rangle\overbar{\left\langle\xi_t,(\chi_j-1)f^{[t]} -\restr{\mathcal{P}^{\perp}_{U \times X}\left(\hat{\chi}_j f\right)}{\{t\} \times X}\right\rangle}\right]\\
&\, \, \, \, \, \, \, \, \, \, \, \, \, \, \, \, \, \, \, \, \, \, \, \, \, \, \, \, \, \, \, \, \, \, \, + \abs{\left\langle\xi_t,\restr{\mathcal{P}_{U \times X}\left(\left(\hat{\chi}_j-1\right) f\right)}{\{t\} \times X}\right\rangle}^2\\
&\leq \abs{\left\langle \xi_t,f^{[t]}\right\rangle}^2 + 2\abs{\left\langle \xi_t,f^{[t]}\right\rangle}\abs{\left\langle\xi_t,\restr{\mathcal{P}_{U \times X}\left(\left(\hat{\chi}_j-1\right) f\right)}{\{t\} \times X}\right\rangle}\\
&\, \, \, \, \, \, \, \, \, \, \, \, \, \, \, \, \, \, \, \, \, \, \, \, \, \, \, \, \, \, \, \, \, \, \, + \abs{\left\langle\xi_t,\restr{\mathcal{P}_{U \times X}\left(\left(\hat{\chi}_j-1\right) f\right)}{\{t\} \times X}\right\rangle}^2.
\end{align*}

Since $\xi_t$ is a continuous linear functional, there exists constants $C_1 > 0$ and $C_2 > 0$ such that
$$\abs{\left\langle \xi_t,f^{[t]}\right\rangle}^2 \leq C_1\norm{f^{[t]}}^2_{h^{[t]},X},$$
and
\begin{align*}
\abs{\left\langle \xi_t,\restr{\mathcal{P}_{U \times X}\left(\left(\hat{\chi}_j-1\right) f\right)}{\{t\} \times X}\right\rangle}^2 &\leq C_2\norm{\restr{\mathcal{P}_{U \times X}\left(\left(\hat{\chi}_j-1\right) f\right)}{\{t\} \times X}}^2_{h^{[t]},X}\\
&\leq C_2\norm{(\chi_j-1)f^{[t]}}^2_{h^{[t]},X}\\
&\leq C_2\norm{f^{[t]}}^2_{h^{[t]},X-\bar{X}_{j+1}}.
\end{align*}

Now since $\norm{f^{[t]}}^2_{h^{[t]},X-\bar{X}_j} \xrightarrow[j \rightarrow +\infty]{} 0,$
it follows that for any $\varepsilon > 0$,
$$\abs{\left\langle \xi_t,f^{[t]}\right\rangle + \left\langle\xi_t,(\chi_j-1)f^{[t]} -\restr{\mathcal{P}^{\perp}_{U \times X}\left(\hat{\chi}_j f\right)}{\{t\} \times X}\right\rangle}^2 < \dfrac{\abs{\left\langle\xi_t,f^{[t]}\right\rangle}^2}{\norm{f^{[t]}}^2_{h^{[t]},X}} + \varepsilon \leq \norm{\xi}^2_{*,h^{[t]},X} + \varepsilon,$$
provided that $j$ is sufficiently large. Therefore,
$\norm{\xi}^2_{*,h^{[t]},X} \leq \norm{\xi^{(j)}}^2_{*,h^{[t]},X_{j+1}}.$\\

The same exact argument with $X_{j+2}$ replacing $X$ shows that the sequence of dual squared norms $\left\{\norm{\xi^{(j)}}^2_{*,h^{[t]},X_{j+1}}\right\}_{j\geq 1}$ is decreasing. Moreover, this sequence is bounded below by $\norm{\xi}^2_{*,h^{[t]},X}$.\\

We now need to show that $\norm{\xi}^2_{*,h^{[t]},X}$ is indeed the limit of $\left\{\norm{\xi^{(j)}}^2_{*,h^{[t]},X_{j+1}}\right\}_{j\geq 1}$. In particular, all we need to show is that
$$\lim_{j \rightarrow +\infty} \norm{\xi^{(j)}}^2_{*,h^{[t]},X_{j+1}} \leq \norm{\xi}^2_{*,h^{[t]},X}.$$

By the definition of $\norm{\xi^{(j)}}^2_{*,h^{[t]},X_{j+1}}$, for each $j$, there exists $f_j \in \holosections(E_{(j+1),h})$ such that $$\norm{\xi^{(j)}}^2_{*,h^{[t]},X_{j+1}} = \abs{\left\langle\xi^{(j)},f^{[t]}_j\right\rangle}^2 \text{ and  } \norm{f^{[t]}_j}^2_{h^{[t]},X_{j+1}} = 1.$$ 
Extend $f_j$ by $0$ on $U \times (X-X_{j+1})$ and let $\tilde{f}_j$ be the extension of $f_j$. Then $\tilde{f}_j \in L^2(U \times X,h)$ and $\tilde{f}_j$ converges to some $\tilde{f}$ in $L^2(U \times X,h)$. In fact, $\tilde{f} \in \HH(U \times X,h)$. But then,
\begin{align*}
\norm{\xi^{(j)}}^2_{*,h^{[t]},X_{j+1}} = \abs{\left\langle \xi^{(j)}_t,\tilde{f}^{[t]}_j\right\rangle}^2 &= \abs{\left\langle \xi^{(j)}_t,\tilde{f}^{[t]}\right\rangle + \left\langle \xi^{(j)}_t,\tilde{f}^{[t]}_j-\tilde{f}^{[t]}\right\rangle}^2\\
&\leq \abs{\left\langle \xi^{(j)}_t,\tilde{f}^{[t]}\right\rangle}^2 + 2\abs{\left\langle \xi^{(j)}_t,\tilde{f}^{[t]}\right\rangle}\abs{\left\langle \xi^{(j)}_t,\tilde{f}^{[t]}_j-\tilde{f}^{[t]}\right\rangle}\\
&\ \ \ \ \ \ \ \ \ \ \ \ \ \ \ \ \, \ \ \ \ + \abs{\left\langle \xi^{(j)}_t,\tilde{f}^{[t]}_j-\tilde{f}^{[t]}\right\rangle}^2.
\end{align*}

Clearly, $\abs{\left\langle \xi^{(j)}_t,\tilde{f}^{[t]}_j-\tilde{f}^{[t]}\right\rangle}^2$ converges to $0$ as $j \rightarrow +\infty$ by continuity since $\norm{\tilde{f}^{[t]}_j-\tilde{f}^{[t]}}^2_{h^{[t]},X}$ converges to $0$ as $j \rightarrow +\infty$ by $L^2$-convergence. On the other hand,
$$\left\langle \xi^{(j)},\tilde{f}^{[t]}\right\rangle = \left\langle \xi_t,\tilde{f}^{[t]}\right\rangle + \left\langle\xi_t,\restr{\mathcal{P}_{U \times X}\left(\left(\hat{\chi}_j-1\right)f\right)}{\{t\} \times X}\right\rangle,$$
and so arguing as we previously did, we can see that
$$\lim_{j \rightarrow +\infty} \abs{\left\langle\xi^{(j)},\tilde{f}^{[t]}\right\rangle}^2 \leq \norm{\xi}^2_{*,h^{[t]},X}.$$ 
Altogether, we conclude that
$\lim_{j \rightarrow +\infty} \norm{\xi^{(j)}}^2_{*,h^{[t]},X_{j+1}} \leq \norm{\xi}^2_{*,h^{[t]},X}.$\\

By Theorem \ref{thm-nakano-positivity-smooth-bounded-stein} if $\Xi_{\delta,\eta}(h) >_{\mathrm{Griff}} 0$ and
$$\dbar_X\left(\left(h^{[t]}\right)^{-1}\del_X h^{[t]}\right) + \left(\ricci(g) + 2\del_X\dbar_X\eta-(1+\delta)\del_X\eta\wedge\dbar_X\eta\right) \otimes \mathrm{Id}_V >_{\mathrm{Nak}} 0,$$ 
for each $t \in U$, then each of the functions $U \ni t \mapsto \norm{\xi^{(j)}}^2_{*,h^{[t]},X_{j+1}}$ is strictly plurisubharmonic by Griffiths-positivity. Therefore, the function $U \ni t \mapsto \norm{\xi}^2_{*,h^{[t]},X}$ is strictly plurisubharmonic and the result follows in this case. Otherwise, if either $\Xi_{\delta,\eta}(h) \geq_{\mathrm{Griff}} 0$ or $$\dbar_X\left(\left(h^{[t]}\right)^{-1}\del_X h^{[t]}\right) + \left(\ricci(g) + 2\del_X\dbar_X\eta-(1+\delta)\del_X\eta\wedge\dbar_X\eta\right) \otimes \mathrm{Id}_V \geq_{\mathrm{Nak}} 0,$$
for each $t \in U$, then each of the functions $U \ni t \mapsto \norm{\xi^{(j)}}^2_{*,h^{[t]},X_{j+1}}$ is plurisubharmonic by Griffiths-positivity. Therefore, the function $U \ni t \mapsto \norm{\xi}^2_{*,h^{[t]},X}$ is plurisubharmonic and the result follows again, in this case.
\end{proof}

\subsubsection{Nakano positivity for trivial families of unbounded Stein manifolds}\label{trivial-families-general-nakano-positivity}

We now prove Theorem \ref{nakano-positivity-general-stein}.

\begin{proof}
Let $t \in U$ be arbitrary. Let $X$ be an unbounded Stein manifold. We may exhaust $X$ as $X = \bigcup_{j \geq 1} X_j$ where $\{X_j\}_{j\geq 1}$ is an increasing sequence of relatively compact  such that for each $j$, $X_j$ has compact closure in $X_{j+1}$. For each $j$, let $\chi_j : X \rightarrow [0,1]$ be a smooth function supported on $X_{j+1}$ and that is identically $1$ on $\overbar{X}_{j}$. Moreover, let $F_h$ denote the field of Hilbert spaces with fiber $L^2_t$ at $t \in U$ and let $F_{(j+1),h}$ and $E_{(j+1),h}$ denote the bundles with fibers $L^2_{t,(j+1)} =: L^2\left(X_{j+1},h^{[t]}\right)$ and $\mathcal{H}^2_{t,(j+1)} =: \HH\left(X_{j+1},h^{[t]}\right)$ at $t \in U$, respectively. Let $\mathcal{P}_t$ denote the orthogonal projection $L^2_t \rightarrow \HH_t$ and let $\mathcal{P}^{\perp}_t$ denote its orthogonal complement. Similarly, let $\mathcal{P}^{(j+1)}_t$ denote the orthogonal projection $L^2_{t,(j+1)} \rightarrow \HH_{t,(j+1)}$ and let $\mathcal{P}^{(j+1),\perp}_t$ denote its orthogonal complement. Then $$\nabla^{F_h}_{t_k} = \left(h^{[t]}\right)^{-1}\del_{t_k}h^{[t]} = \nabla^{F_{(j+1),h}}_{t_k} \text{ and } \Theta^{F_h}_{t_k\bar{t}_{\ell}} = \dbar_{t_{\ell}}\left(\left(h^{[t]}\right)^{-1}\del_{t_k}h^{[t]}\right) = \Theta^{F_{(j+1),h}}_{t_k\bar{t}_{\ell}}.$$

Let $u = (u_1, \cdots, u_m)$ be an $m$-tuple of holomorphic sections of $E_h$ belonging to the domains of $\nabla^{E_h,(1,0)}_{t_k}$ and $\Theta^{E_h}_{t_k\bar{t}_{\ell}}$ for each $1 \leq k,\ell \leq m$ such that $\nabla^{E_h,(1,0)}_{t_k} u^{[t]}_k = 0$ at $t$, for each $k$. Note that:
\begin{align*}
T_u &= \sum_{1 \leq k,\ell \leq m} \left(u^{[t]}_k,u^{[t]}_{\ell}\right)_{h^{[t]}} \widehat{dt_k \wedge d\bar{t}_{\ell}}\\
&= \sum_{1 \leq k,\ell \leq m} \left(\chi^2_j u^{[t]}_k,u^{[t]}_{\ell}\right)_{h^{[t]}} \widehat{dt_k \wedge d\bar{t}_{\ell}} + \sum_{1 \leq k,\ell \leq m} \left((1-\chi^2_j)u^{[t]}_k,u^{[t]}_{\ell}\right)_{h^{[t]}} \widehat{dt_k \wedge d\bar{t}_{\ell}}.
\end{align*}

Since each $u_k$ is a holomorphic section of $E_h$, it follows, by definition, that $u^{[t]}_k \in \mathcal{H}^2_t \subseteq L^2_t$ for each $t \in U$ and that each $u_k$ depends holomorphically on $t$. Let $\hat{\chi}_j := \chi_j \circ \pi_X$. Then for each $j$, $\restr{\hat{\chi}_j u_k}{\{t\} \times X} = \chi_j u^{[t]}_k \in L^2_t$ for each $t$, and each $\hat{\chi}_j u_k$ still depends holomorphically on $t$ since $\hat{\chi}_j$ is independent of $t$. Therefore, each $\hat{\chi}_j u_k$ is a holomorphic section of $F_h$. So,
\begin{align*}
    \del_U\dbar_U\left(-T_u\right) &= \sum_{1 \leq k,\ell \leq m} \left(\Theta^{F_h}_{t_k\bar{t}_{\ell}} \left(\chi^2_j u^{[t]}_k\right), u^{[t]}_{\ell}\right)_{h^{[t]}} dV(t)\\ &\ \ \ \ \ \ \ -\sum_{1 \leq k,\ell \leq m} \left(\nabla^{F_h,(1,0)}_{t_k} \left(\chi^2_j u^{[t]}_k\right),\nabla^{F_h,(1,0)}_{t_{\ell}} u^{[t]}_{\ell}\right)_{h^{[t]}} dV(t)\\
    &\ \ \ \ \ \ \ + \sum_{1 \leq k,\ell \leq m} \left(\Theta^{F_h}_{t_k\bar{t}_{\ell}}\left((1-\chi^2_j) u^{[t]}_k\right), u^{[t]}_{\ell}\right)_{h^{[t]}} dV(t)\\
    &\ \ \ \ \ \ \ -\sum_{1 \leq k,\ell \leq m} \left(\nabla^{F_h,(1,0)}_{t_k}\left((1-\chi^2_j\right) u^{[t]}_k),\nabla^{F_h,(1,0)}_{t_{\ell}}u^{[t]}_{\ell}\right)_{h^{[t]}} dV(t).
\end{align*}
But since $\chi_j$ does not depend on $t$, we have $\left[\nabla^{F_h,(1,0)}_{t_k},\chi^2_j\right] = 0 = \left[\nabla^{F_h,(1,0)}_{t_k},1-\chi^2_j\right]$ and $\left[\Theta^{F_h,(1,0)}_{t_j \bar{t}_k},\chi^2_j\right] = 0 = \left[\Theta^{F_h,(1,0)}_{t_j \bar{t}_k},1-\chi^2_j\right]$ for each $t_k$. Therefore,
\begin{align*}
    \del_U\dbar_U\left(-T_u\right) &= \sum_{1 \leq k,\ell \leq m} \left(\Theta^{F_h}_{t_k\bar{t}_{\ell}} \chi_j u^{[t]}_k, \chi_j u^{[t]}_{\ell}\right)_{h^{[t]}} dV(t)\\ &\ \ \ \ \ \ \ -\sum_{1 \leq k,\ell \leq m} \left(\nabla^{F_h,(1,0)}_{t_k} \chi_j u^{[t]}_k,\nabla^{F_h,(1,0)}_{t_{\ell}} \chi_j u^{[t]}_{\ell}\right)_{h^{[t]}} dV(t)\\
    &\ \ \ \ \ \ \ + \sum_{1 \leq k,\ell \leq m} \left((1-\chi^2_j)\Theta^{F_h}_{t_k\bar{t}_{\ell}}u^{[t]}_k, u^{[t]}_{\ell}\right)_{h^{[t]}} dV(t)\\
    &\ \ \ \ \ \ \ -\sum_{1 \leq k,\ell \leq m} \left(\left(1-\chi^2_j\right)\nabla^{F_h,(1,0)}_{t_k}u^{[t]}_k,\nabla^{F_h,(1,0)}_{t_{\ell}}u^{[t]}_{\ell}\right)_{h^{[t]}} dV(t).\\
\end{align*}

We have the following for each $k$ and $\ell$.
\begin{align*}
&\left(\Theta^{E_h}_{t_k\bar{t}_{\ell}}\left((1-\chi^2_j)u^{[t]}_k\right), u^{[t]}_{\ell}\right)_{h^{[t]}}\\
&= \left(\Theta^{F_h}_{t_k\bar{t}_{\ell}}\left((1-\chi^2_j)u^{[t]}_k\right), u^{[t]}_{\ell}\right)_{h^{[t]}} - \left(\mathcal{P}^{\perp}_{t}\left(\nabla^{F_h}_{t_k}\left((1-\chi^2_j)u^{[t]}_k\right)\right),\mathcal{P}^{\perp}_{t}\left(\nabla^{F_h}_{t_{\ell}} u^{[t]}_{\ell}\right)\right)_{h^{[t]}}\\
&= \left((1-\chi^2_j)\Theta^{F_h}_{t_k\bar{t}_{\ell}} u^{[t]}_k,\chi_j u^{[t]}_{\ell}\right)_{h^{[t]}} - \left(\mathcal{P}^{\perp}_{t}\left((1-\chi^2_j)\nabla^{F_h}_{t_k} u^{[t]}_k\right),\mathcal{P}^{\perp}_{t}\left(\nabla^{F_h}_{t_{\ell}} u^{[t]}_{\ell}\right)\right)_{h^{[t]}}.
\end{align*}

Since $\mathcal{P}^{\perp}_t$ is an orthogonal projection, the Cauchy-Schwarz inequality yields
\begin{align*}
&\left(\mathcal{P}^{\perp}_{t}\left((1-\chi^2_j)\nabla^{F_h}_{t_k} u^{[t]}_k\right),\mathcal{P}^{\perp}_{t}\left(\nabla^{F_h}_{t_{\ell}} u^{[t]}_{\ell}\right)\right)_{h^{[t]}}\\
&\leq \norm{\mathcal{P}^{\perp}_{t}\left((1-\chi^2_j)\nabla^{F_h}_{t_k} u^{[t]}_k\right)}^2_{h^{[t]},X}\norm{\mathcal{P}^{\perp}_{t}\left(\nabla^{F_h}_{t_{\ell}} u^{[t]}_{\ell}\right)}^2_{h^{[t]},X}\\
&\leq \norm{(1-\chi^2_j)\nabla^{F_h}_{t_k} u^{[t]}_k}^2_{h^{[t]},X}\norm{\nabla^{F_h}_{t_{\ell}} u^{[t]}_{\ell}}^2_{h^{[t]},X}\\
&\leq \norm{\nabla^{F_h}_{t_k}u^{[t]}_k}^2_{h^{[t]},X-\overbar{X}_j}\norm{\nabla^{F_h}_{t_{\ell}}u^{[t]}_{\ell}}^2_{h^{[t]},X} \xrightarrow[j \rightarrow +\infty]{} 0. 
\end{align*}

Similarly, for each $k$ and $\ell,$ $$\left((1-\chi^2_j)\Theta^{F_h}_{t_k\bar{t}_{\ell}} u^{[t]}_k,\chi_j u^{[t]}_{\ell}\right)_{h^{[t]}} \leq \norm{\Theta^{F_h}_{k \ell}u^{[t]}_k}^2_{h^{[t]},X-\overbar{X}_j}\norm{u^{[t]}_{\ell}}^2_{h^{[t]},X} \xrightarrow[j \rightarrow +\infty]{} 0.$$

Therefore,
$$\sum_{1 \leq k,\ell \leq m} \left((1-\chi^2_j)\Theta^{F_h}_{t_k\bar{t}_{\ell}}u^{[t]}_k, u^{[t]}_{\ell}\right)_{h^{[t]}} dV(t) \xrightarrow[j \rightarrow +\infty]{} 0.$$

Estimating each of the terms $\left(\left(1-\chi^2_j\right)\nabla^{F_h,(1,0)}_{t_k}u^{[t]}_k,\nabla^{F_h,(1,0)}_{t_{\ell}}u^{[t]}_{\ell}\right)_{h^{[t]}}$ in the same fashion, we see that
$$\sum_{1 \leq k,\ell \leq m} \left(\left(1-\chi^2_j\right)\nabla^{F_h,(1,0)}_{t_k}u^{[t]}_k,\nabla^{F_h,(1,0)}_{t_{\ell}}u^{[t]}_{\ell}\right)_{h^{[t]}} dV(t) \xrightarrow[j \rightarrow +\infty]{} 0.$$

We now focus our attention on the first two sums in the expression of $\del_U\dbar_U(-T_u)$. In what follows, let
$$(v_1,v_2)_{[j+1],h^{[t]}} := \int_{X_{j+1}} h^{[t]}\left(v_1,v_2\right) dV_g.$$

For any two holomorphic sections $u$ and $v$ of $F_h$, we have
\begin{align*}
    &\left(\Theta^{F_h}_{t_k\bar{t}_{\ell}} u^{[t]}, v^{[t]}\right)_{h^{[t]}} - \left(\nabla^{F_h,(1,0)}_{t_k}u^{[t]},\nabla^{F_h,(1,0)}_{t_\ell}v^{[t]}\right)_{h^{[t]}}\\
    &= \left(\mathcal{P}^{\perp}_t\left(\nabla^{F}_{t_k}u^{[t]}\right),\mathcal{P}^{\perp}_t\left(\nabla^{F}_{t_\ell}v^{[t]}\right)\right)_{h^{[t]}} + \left(\Theta^{E_h}_{t_k\bar{t}_{\ell}} u^{[t]}, v^{[t]}\right)_{h^{[t]}}\\
    &\ \ \ \ \ \ \ \ \ \ \ \ \ \ \ \ \ \ \ \ \ \ \ \ \ \ \ \ \ \ \ \ \ \ \ \ \ \ \ \ \ \ \, \, \, - \left(\nabla^{F_h,(1,0)}_{t_k}u^{[t]},\nabla^{F_h,(1,0)}_{t_\ell}v^{[t]}\right)_{h^{[t]}}\\
    &= \left(\Theta^{E_h}_{t_k\bar{t}_{\ell}}u^{[t]},v^{[t]}\right)_{h^{[t]}} + \left(\mathcal{P}^{\perp}_t\left(\nabla^{F_h,(1,0)}_{t_k}u^{[t]}\right),\mathcal{P}^{\perp}_t\left(\nabla^{F_h,(1,0)}_{t_\ell}v^{[t]}\right)\right)_{h^{[t]}}\\
    &\ \ \ \ \ \ \ \ \ \ \ \ \ \ \ \ \ \ \ \ \ \ \ \ \ \ - \left(\nabla^{F_h,(1,0)}_{t_k}u^{[t]},\nabla^{F_h,(1,0)}_{t_\ell}v^{[t]}\right)_{h^{[t]}}\\
    &= \left(\Theta^{E_h}_{t_k\bar{t}_{\ell}}u^{[t]},v^{[t]}\right)_{h^{[t]}} -\left(u^{[t]},\mathcal{P}_t\left(\nabla^{F_h,(1,0)}_{t_k}v^{[t]}\right)\right)_{h^{[t]}} -\left(\mathcal{P}_t\left(\nabla^{F_h,(1,0)}_{t_k}u^{[t]}\right),v^{[t]}\right)_{h^{[t]}}\\
    &\ \ \ \ \ \ \ \ \ \ \ \ \ \ \ \ \ \ \ \ \ \ \ \ \ \ +\left(\mathcal{P}_t\left(\nabla^{F_h,(1,0)}_{t_\ell}u^{[t]}\right),\mathcal{P}_t\left(\nabla^{F_h,(1,0)}_{t_\ell}v^{[t]}\right)\right)_{h^{[t}}\\
    &= \left(\Theta^{E_h}_{t_k\bar{t}_{\ell}}u^{[t]},v^{[t]}\right)_{h^{[t]}} -\left(u^{[t]},\nabla^{E_h,(1,0)}_{t_k}v^{[t]}\right)_{h^{[t]}} -\left(\nabla^{E_h,(1,0)}_{t_k}u^{[t]},v^{[t]}\right)_{h^{[t]}}\\
    &\ \ \ \ \ \ \ \ \ \ \ \ \ \ \ \ \ \ \ \ \ \ \ \ \ \ +\left(\nabla^{E_h,(1,0)}_{t_k}u^{[t]},\nabla^{E_h,(1,0)}_{t_\ell}v^{[t]}\right)_{h^{[t}}.
\end{align*}

Now, since each $u_k$ satisfies $\nabla^{E_h,(1,0)}_{t_k} u^{[t]}_k = 0$ at $t$, we have the following at $t$.
\begin{align*}
    \nabla^{E_h,(1,0)}_{t_k}\chi_j u^{[t]}_k &= \nabla^{E_h,(1,0)}_{t_k}\left(\left(\chi_j -1\right) u^{[t]}_k\right)\\
    &= \mathcal{P}_t\left(\nabla^{F_h,(1,0)}_{t_k}\left(\left(\chi_j -1\right) u^{[t]}_k\right)\right)\\
    &= \mathcal{P}_t\left(\left(\chi_j-1\right)\nabla^{F_h,(1,0)}_{t_k}u^{[t]}_k\right)
\end{align*}

But since $\mathcal{P}_t$ is an orthogonal projection,
$$\norm{\mathcal{P}_t\left(\left(\chi_j-1\right)\nabla^{F_h,(1,0)}_{t_k}u^{[t]}_k\right)}^2_{h^{[t]}} \leq \norm{\left(\chi_j-1\right)\nabla^{F_h,(1,0)}_{t_k}u^{[t]}_k}^2_{h^{[t]}} \leq \norm{\nabla^{F_h,(1,0)}_{t_k}u^{[t]}_k}^2_{h^{[t]},X-\overbar{X}_j} \xrightarrow[j \rightarrow +\infty]{} 0,$$
and so by the Cauchy-Schwarz inequality,
$$\left(u^{[t]}_k,\nabla^{E_h,(1,0)}_{t_k}u^{[t]}_\ell\right)_{h^{[t]}}, \left(\nabla^{E_h,(1,0)}_{t_k}u^{[t]}_k,u^{[t]}_\ell\right)_{h^{[t]}}, \left(\nabla^{E_h,(1,0)}_{t_k}u^{[t]}_k,\nabla^{E_h,(1,0)}_{t_\ell}u^{[t]}_\ell\right)_{h^{[t}} \xrightarrow[j \rightarrow +\infty]{} 0.$$

On the other hand, note that each $u_k$ is a holomorphic section of $E_{(j+1),h}$ and that each $\hat{\chi}_j u_k$ is simultaneously a holomorphic section of $F_{(j+1),h}$ and a smooth section of $E_{(j+1),h}$. Since $\Theta^{F_h}_{t_k\bar{t}_{\ell}} = \Theta^{F_{(j+1),h}}_{t_k\bar{t}_{\ell}}$,
\begin{align*}
\left(\Theta^{E_h}_{t_k\bar{t}_{\ell}}\chi_j u^{[t]}_k,\chi_j u^{[t]}_{\ell}\right)_{h^{[t]}} &= \left(\Theta^{E_{(j+1),h}}_{t_k\bar{t}_{\ell}}\chi_j u^{[t]}_k,\chi_j u^{[t]}_{\ell}\right)_{[j+1],h^{[t]}}\\
&\ \ \ \ + \left(\mathcal{P}^{(j+1),\perp}_{t}\left(\nabla^{F_h}_{t_k}\chi_j u^{[t]}_k\right),\mathcal{P}^{(j+1),\perp}_{t}\left(\nabla^{F_h}_{t_{\ell}}\chi_j u^{[t]}_{\ell}\right)\right)_{[j+1],h^{[t]}}\\
& \ \ \ \ - \left(\mathcal{P}^{\perp}_{t}\left(\nabla^{F_h}_{t_k}\chi_j u^{[t]}_k\right),\mathcal{P}^{\perp}_{t}\left(\nabla^{F_h}_{t_{\ell}}\chi_j u^{[t]}_{\ell}\right)\right)_{h^{[t]}}.\\
\end{align*}

Note again that because $\nabla^{E_h,(1,0)}_{t_k} u^{[t]}_k = 0$ at $t$,
$$\mathcal{P}^{\perp}_{t}\left(\nabla^{F_h}_{t_k}\chi_j u^{[t]}_k\right) = \mathcal{P}^{\perp}_{t}\left(\nabla^{F_h,(1,0)}_{t_k}\chi_j u^{[t]}_k\right) = \nabla^{E_h,(1,0)}_{t_k}\chi_j u^{[t]}_k = \nabla^{E_h,(1,0)}_{t_k}\left(\chi_j-1\right) u^{[t]}_k,$$
and so our previous arguments imply that $\left(\mathcal{P}^{\perp}_{t}\left(\nabla^{F_h}_{t_k}\chi_j u^{[t]}_k\right),\mathcal{P}^{\perp}_{t}\left(\nabla^{F_h}_{t_{\ell}}\chi_j u^{[t]}_{\ell}\right)\right)_{h^{[t]}} \xrightarrow[j \rightarrow +\infty]{} 0.$ Additionally,
\begin{align*}
\left(\Theta^{E_{(j+1),h}}_{t_k\bar{t}_{\ell}}\chi_j u^{[t]}_k,\chi_j u^{[t]}_{\ell}\right)_{[j+1],h^{[t]}} &= \left(\Theta^{E_{(j+1),h}}_{t_k\bar{t}_{\ell}}u^{[t]}_k,u^{[t]}_{\ell}\right)_{[j+1],h^{[t]}} + \left(\Theta^{E_{(j+1),h}}_{t_k\bar{t}_{\ell}}\left(\chi_j-1\right) u^{[t]}_k, u^{[t]}_{\ell}\right)_{[j+1],h^{[t]}}\\
&\ \ \ \ + \left(\Theta^{E_{(j+1),h}}_{t_k\bar{t}_{\ell}}u^{[t]}_k,\left(\chi_j -1\right)u^{[t]}_{\ell}\right)_{[j+1],h^{[t]}}\\
&\ \ \ \ + \left(\Theta^{E_{(j+1),h}}_{t_k\bar{t}_{\ell}}\left(\chi_j-1\right) u^{[t]}_k,\left(\chi_j-1\right) u^{[t]}_{\ell}\right)_{[j+1],h^{[t]}}.
\end{align*}

The last two summands converge to $0$ as $j \rightarrow +\infty$ by the Cauchy-Schwarz inequality and the fact that
$$\norm{\left(\chi_j-1\right)u^{[t]}_k}^2_{[j+1],h^{[t]}} \leq \norm{u^{[t]}_k}^2_{h^{[t]},X_{j+1}-\overbar{X}_j} \xrightarrow[j \rightarrow +\infty]{} 0.$$

As for the second summand,
\begin{align*}
&\left(\Theta^{E_{(j+1),h}}_{t_k\bar{t}_{\ell}}\left(\chi_j-1\right)u^{[t]}_k,u^{[t]}_{\ell}\right)_{[j+1],h^{[t]}}\\
&= \left(\Theta^{F_{(j+1),h}}_{t_k\bar{t}_{\ell}}\left(\chi_j-1\right)u^{[t]}_k,u^{[t]}_{\ell}\right)_{[j+1],h^{[t]}} - \left(\mathcal{P}^{(j+1),\perp}_t\left(\nabla^{F_h}_{t_k} \left(\chi_j-1\right)u^{[t]}_k\right),\mathcal{P}^{(j+1),\perp}_t \left(\nabla^{F_h}_{t_\ell} u^{[t]}_{\ell}\right)\right)_{[j+1],h^{[t]}}\\
&= \left(\Theta^{F_h}_{t_k\bar{t}_{\ell}}\left(\chi_j-1\right)u^{[t]}_k,u^{[t]}_{\ell}\right)_{[j+1],h^{[t]}} - \left(\mathcal{P}^{(j+1),\perp}_t\left(\nabla^{F_h}_{t_k} \left(\chi_j-1\right)u^{[t]}_k\right),\mathcal{P}^{(j+1),\perp}_t \left(\nabla^{F_h}_{t_\ell} u^{[t]}_{\ell}\right)\right)_{[j+1],h^{[t]}}\\
&= \left(\left(\chi_j-1\right)\Theta^{F_h}_{t_k\bar{t}_{\ell}}u^{[t]}_k,u^{[t]}_{\ell}\right)_{[j+1],h^{[t]}} - \left(\mathcal{P}^{(j+1),\perp}_t \left(\left(\chi_j-1\right)\nabla^{F_h}_{t_k} u^{[t]}_k\right),\mathcal{P}^{(j+1),\perp}_t\left(\nabla^{F_h}_{t_\ell} u^{[t]}_{\ell}\right)\right)_{[j+1],h^{[t]}}.\\
\end{align*}

Therefore,
\begin{align*}
&\left(\Theta^{E_{(j+1),h}}_{t_k\bar{t}_{\ell}}\left(\chi_j-1\right)u^{[t]}_k,u^{[t]}_{\ell}\right)_{[j+1],h^{[t]}}\\
&\leq \norm{\left(\chi_j-1\right)\Theta^{F_h}_{t_k\bar{t}_{\ell}}u^{[t]}_k}^2_{[j+1],h^{[t]}}\norm{u^{[t]}_{\ell}}^2_{[j+1],h^{[t]}}\\ 
&\ \ \ \ \ + \norm{\mathcal{P}^{(j+1),\perp}_t\left(\left(\chi_j-1\right)\nabla^{F_h}_{t_k} u^{[t]}_k\right)}^2_{[j+1],h^{[t]}}\norm{\mathcal{P}^{(j+1),\perp}_t\nabla^{F_h}_{t_\ell}\left(u^{[t]}_{\ell}\right)}^2_{[j+1],h^{[t]}}\\
&\leq \norm{\left(\chi_j-1\right)\Theta^{F_h}_{t_k\bar{t}_{\ell}}u^{[t]}_k}^2_{[j+1],h^{[t]}}\norm{u^{[t]}_{\ell}}^2_{[j+1],h^{[t]}} + \norm{\left(\chi_j-1\right)\nabla^{F_h}_{t_k} u^{[t]}_k}^2_{[j+1],h^{[t]}}\norm{\nabla^{F_h}_{t_\ell} u^{[t]}_{\ell}}^2_{[j+1],h^{[t]}}\\
&\leq \norm{\Theta^{F_h}_{t_k\bar{t}_{\ell}}u^{[t]}_k}^2_{h^{[t]},X_{j+1}-\overbar{X}_j}\norm{u^{[t]}_{\ell}}^2_{[j+1],h^{[t]}} + \norm{\nabla^{F_h}_{t_k} u^{[t]}_k}^2_{h^{[t]},X_{j+1}-\overbar{X}_j}\norm{\nabla^{F_h}_{t_\ell} u^{[t]}_{\ell}}^2_{[j+1],h^{[t]}}
\xrightarrow[j \rightarrow +\infty]{} 0.
\end{align*}

Altogether, we have
$$\del_U\dbar_U(-T_u) = \sum_{1 \leq k,\ell \leq m} \left(\Theta^{E_{(j+1),h}}_{t_k\bar{t}_{\ell}} u^{[t]}_k, u^{[t]}_{\ell}\right)_{[j+1],h^{[t]}} dV(t) + o(j).$$

By Theorem \ref{thm-nakano-positivity-smooth-bounded-stein}, if $\Xi_{\delta,\eta}(h) >_{\mathrm{Griff}} 0$ and
$$\dbar_X\left(\left(h^{[t]}\right)^{-1}\del_X h^{[t]}\right) + \left(\ricci(g) + 2\del_X\dbar_X\eta-(1+\delta)\del_X\eta\wedge\dbar_X\eta\right) \otimes \mathrm{Id}_V >_{\mathrm{Nak}} 0,$$
for each $t \in U$, then
$$\exists c_0 > 0: \sum_{1 \leq k,\ell \leq m} \left(\Theta^{E_{(j+1),h}}_{t_k\bar{t}_{\ell}} u^{[t]}_k, u^{[t]}_{\ell}\right)_{[j+1],h^{[t]}} \geq c_0\sum_{k=1}^m \norm{ u^{[t]}_k}^2_{[j+1],h^{[t]}}.$$

For each $u_k$,
$\norm{u^{[t]}_k}^2_{[j+1],h^{[t]}} = \norm{u^{[t]}_k}^2_{h^{[t]}} - \norm{u^{[t]}_k}^2_{h^{[t]},X-\overbar{X}_{j+1}}$ and since $u^{[t]}_k \in \mathcal{H}^2_t \subseteq L^2_t$ for each $u_k$, $\norm{u^{[t]}_k}^2_{h^{[t]},X-\overbar{X}_{j+1}} \xrightarrow[j \rightarrow +\infty]{} 0$. Therefore,
$$\exists c_0 > 0: \del_U\dbar_U(-T_u) \geq c_0\sum_{k = 1}^m \norm{u^{[t]}_k}^2_{h^{[t]}}dV(t) + o(j),$$
whence $\left(E,(\cdot,\cdot)_{h^{[t]}}\right)$ is Nakano positive at $t$ by taking the limit as $j \rightarrow +\infty$. The result follows as $t \in U$ was arbitrary.

Otherwise, if either $\Xi_{\delta,\eta}(h) \geq_{\mathrm{Griff}} 0$ or $$\dbar_X\left(\left(h^{[t]}\right)^{-1}\del_X h^{[t]}\right) + \left(\ricci(g) + 2\del_X\dbar_X\eta-(1+\delta)\del_X\eta\wedge\dbar_X\eta\right) \otimes \mathrm{Id}_V \geq_{\mathrm{Nak}} 0,$$
for each $t \in U$, then again by Theorem \ref{thm-nakano-positivity-smooth-bounded-stein} and our previous observations,
$$\del_U\dbar_U(-T_u) \geq o(j),$$
whence $\left(E,(\cdot,\cdot)_{h^{[t]}}\right)$ is Nakano semipositive at $t$ by taking the limit as $j \rightarrow +\infty$. The result follows in this case as well since $t \in U$ was arbitrary.
\end{proof}

\section{Plurisubharmonic variation results for trivial families of unbounded Stein manifolds}
\subsection{Variations of Bergman kernels}
One immediate consequence of Theorem \ref{griffiths-positivity-general-stein} is Theorem \ref{bergman-kernel-trivial-theorem}, which we now prove.

\begin{proof}
For $z \in X$ and $t \in U$, define $\hat{\xi}^{(z)}_t$ by $\hat{\xi}^{(z)}_t (\hat{f}) = i^{*}_t \hat{f}(z),$
for $\hat{f} \in \holosections\left(E_h\right)$. By Bergman's inequality, $\hat{\xi}^{(z)}_t : \HH_t \rightarrow V_z$ is a bounded linear map. Now let $\sigma \in V^{*}_z$ be a non-zero vector. Then
$$\xi^{(z,\sigma)}_t := \sigma \otimes \hat{\xi}^{(z)}_t \in \left(\HH_t\right)^{*}.$$

Moreover, if $f \in \holosections\left(E_h\right)$, then the function $t \mapsto \left\langle\xi^{(z,\sigma)}_t,f^{[t]}\right\rangle$ is holomorphic. Thus $t \mapsto \xi^{(z,\sigma)}_t$ defines a holomorphic section of $E^{*}_h$ which we denote by $\xi^{(z,\sigma)}$.\\

The fiberwise dual squared norm of this section is given by
$$\norm{\xi^{(z,\sigma)}}^2_{*,h^{[t]}} = \sup_{f \in \HH_t-\{0\}} \dfrac{\abs{\left\langle \xi^{(z,\sigma)},f\right\rangle}^2}{\norm{f}^2_{h^{[t]}}} = \sup_{f \in \HH_t-\{0\}} \dfrac{\abs{\left\langle f(z),\sigma\right\rangle}^2}{\norm{f}^2_{h^{[t]}}} = \left\langle K_t(z,z),\sigma \otimes \bar{\sigma}\right\rangle.$$

By Theorem \ref{thm-nakano-positivity-smooth-bounded-stein}, if $\Xi_{\delta,\eta}(h) >_{\mathrm{Griff}} 0$ and $$\dbar_X\left(\left(h^{[t]}\right)^{-1}\del_X h^{[t]}\right) + \left(\ricci(g) + 2\del_X\dbar_X\eta-(1+\delta)\del_X\eta\wedge\dbar_X\eta\right) \otimes \mathrm{Id}_V >_{\mathrm{Nak}} 0,$$ 
for each $t \in U$, then
$$\del_U\dbar_U\log \left\langle K_t(z,z),\sigma \otimes \bar{\sigma}\right\rangle > 0.$$
Otherwise, if either $\Xi_{\delta,\eta}(h) \geq_{\mathrm{Griff}} 0$ or $$\dbar_X\left(\left(h^{[t]}\right)^{-1}\del_X h^{[t]}\right) + \left(\ricci(g) + 2\del_X\dbar_X\eta-(1+\delta)\del_X\eta\wedge\dbar_X\eta\right) \otimes \mathrm{Id}_V \geq_{\mathrm{Nak}} 0,$$
for each $t \in U$, then
$$\del_U\dbar_U\log \left\langle K_t(z,z),\sigma \otimes \bar{\sigma}\right\rangle \geq 0,$$
which completes the proof.
\end{proof}

In the Euclidean setting, this result corresponds to Berndtsson's result on the log-plurisubharmonic variation of Bergman kernels for product domains (\citep{berndtsson-subharmonicity}). In our setting, we have the following theorem.

\begin{theorem}
Let $U$ be a domain in $\C^m$ and $\Omega$ a pseudoconvex domain in $\C^n$. Let $\delta > 0$, let $\varphi \in \mathcal{C}^{\infty}\left(U \times \Omega\right)$, and let $\eta \in \mathcal{C}^{\infty}(\Omega)$.
Let $K_t$ denote the Bergman kernel of the projection of $L^2\left(\Omega,e^{-\varphi^{[t]}}\right)$ onto $\mathcal{H}^2\left(\Omega,e^{-\varphi^{[t]}}\right)$. If $\Xi_{\delta,\eta}\left(e^{-\varphi}\right) \geq 0$ \emph{(resp. $> 0$)} in $U \times \Omega$, then the function $t \mapsto \log\left(K_t(z,z)\right)$ is plurisubharmonic (resp. strictly plurisubharmonic) or identically $-\infty$.
\end{theorem}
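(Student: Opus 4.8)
The plan is to deduce this statement as the special case of Theorem \ref{bergman-kernel-trivial-theorem} in which $X = \Omega$ — a Stein manifold, being a pseudoconvex domain in $\C^n$ — carries the flat Kähler metric $g$ inherited from the Euclidean metric, $V = \Omega \times \C$ is the trivial line bundle, and $h^{[t]} = e^{-\varphi^{[t]}}$. The entire content lies in checking that the single scalar hypothesis $\Xi_{\delta,\eta}(e^{-\varphi}) \geq 0$ (resp. $> 0$) on $U \times \Omega$ encodes the two hypotheses of Theorem \ref{bergman-kernel-trivial-theorem}.

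First I would record two preliminary reductions. Since $g$ is flat, $\ricci(g) = 0$; and since $h = e^{-\varphi}$ we have $h^{-1}\del_X h = -\del_X\varphi$, hence $\dbar_X\left(h^{-1}\del_X h\right) = \del_X\dbar_X\varphi$. Consequently the lower-right block of the block matrix $\Xi_{\delta,\eta}(e^{-\varphi})$ is $\tfrac{\delta}{1+\delta}\left(\del_X\dbar_X\varphi^{[t]} + 2\del_X\dbar_X\eta - (1+\delta)\del_X\eta\wedge\dbar_X\eta\right)$. Being a principal sub-block of a positive semidefinite (resp. positive definite) Hermitian block matrix, it is itself positive semidefinite (resp. definite); dividing by the positive constant $\tfrac{\delta}{1+\delta}$ and using that Nakano positivity of a $(1,1)$-form coincides with ordinary positivity for a line bundle, I would obtain
$$\dbar_X\left(\left(h^{[t]}\right)^{-1}\del_X h^{[t]}\right) + \left(\ricci(g) + 2\del_X\dbar_X\eta - (1+\delta)\del_X\eta\wedge\dbar_X\eta\right)\otimes\mathrm{Id}_V \geq_{\mathrm{Nak}} 0$$
(resp. $>_{\mathrm{Nak}} 0$) on $\{t\}\times\Omega$, for each $t \in U$. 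Secondly, by the remark recorded immediately after the definition of $\Xi_{\delta,\eta}$ (and its strict analogue), $\Xi_{\delta,\eta}(e^{-\varphi}) \geq 0$ as a block matrix is the same as $\Xi_{\delta,\eta}(e^{-\varphi}) \geq_{\mathrm{Griff}} 0$, and $\Xi_{\delta,\eta}(e^{-\varphi}) > 0$ is the same as $\Xi_{\delta,\eta}(e^{-\varphi}) >_{\mathrm{Griff}} 0$ — indeed for a line bundle Griffiths positivity of this curvature operator is literally positivity of the associated Hermitian form. With these two facts in hand, Theorem \ref{bergman-kernel-trivial-theorem} applies directly (in the semipositive case we invoke the first of the two alternatives in its ``Otherwise'' clause), yielding that the family $\{K_t^{-1}\}$ of possibly singular Hermitian metrics on $\pi^{*}_X V \to U \times \Omega$ is positively (resp. semipositively) curved.

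Finally I would unwind this conclusion. Since $V$ is the trivial line bundle, $K_t^{-1}$ is the scalar metric with local weight $(t,z) \mapsto \log K_t(z,z)$, so positive (resp. semipositive) curvature means exactly that $\log K_t(z,z)$ is strictly plurisubharmonic (resp. plurisubharmonic) on $U \times \Omega$, or identically $-\infty$ there (the degenerate case, e.g. $\HH\left(\Omega, e^{-\varphi^{[t]}}\right) = \{0\}$ for all $t$). For a fixed $z \in \Omega$, the slice $U \times \{z\}$ is a closed complex submanifold of $U \times \Omega$, and the restriction of a (strictly) plurisubharmonic function to a complex submanifold is again (strictly) plurisubharmonic, or identically $-\infty$; applying this to $\log K_t(z,z)$ gives the stated conclusion. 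The only step that is not purely formal is the first reduction — recognizing that $\Xi_{\delta,\eta}(e^{-\varphi}) \geq 0$ carries \emph{both} the Griffiths condition on the full operator and the Nakano condition on $\del_X\dbar_X\varphi^{[t]} + \ricci(g) + 2\del_X\dbar_X\eta - (1+\delta)\del_X\eta\wedge\dbar_X\eta$, the latter being precisely a positive multiple of the lower-right principal block, which is where flatness of $g$ and the identity $\dbar_X\left(h^{-1}\del_X h\right) = \del_X\dbar_X\varphi$ enter.
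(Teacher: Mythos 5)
Your proof is correct and follows the paper's (implicit) approach: the paper simply asserts this theorem as the Euclidean specialization of Theorem \ref{bergman-kernel-trivial-theorem}, offering no further argument, and your write-up is precisely the careful unpacking of that specialization with $X=\Omega$ flat, $V$ trivial of rank one, and $h=e^{-\varphi}$. The one piece of genuine content you supply — that the lone scalar hypothesis $\Xi_{\delta,\eta}(e^{-\varphi})\geq 0$ (resp.\ $>0$) simultaneously furnishes \emph{both} hypotheses of Theorem \ref{bergman-kernel-trivial-theorem}, the Griffiths condition via the remark after the definition of $\Xi_{\delta,\eta}$ and the Nakano condition via positivity of the lower-right principal sub-block (equal to $\tfrac{\delta}{1+\delta}$ times the required fiberwise form, using $\ricci(g)=0$ and $\dbar_X(h^{-1}\del_X h)=\del_X\dbar_X\varphi$), followed by the restriction to the closed slice $U\times\{z\}$ — is exactly the reduction the paper leaves to the reader, and you carry it out without gaps.
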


\subsubsection{Variations of families of compactly supported measures}

Let $\{\hat{\mu}_t\}_{t \in U}$ be a family of compactly supported $V^{*}$-valued complex measures over $X$. Then for each section $f$ of $E_h$, the pairing $\mu^{(f)}_t := \left\langle f^{[t]},\hat{\mu}_t\right\rangle$
defines a compactly supported complex measure on $X$ for each $t \in U$. Now consider the mapping $\xi^{(\mu)}_t$ defined by
$$f^{[t]} \mapsto \left\langle \xi^{(\mu)}_t,f^{[t]}\right\rangle := \mu^{(f)}_t(X) =\int_X \left\langle f^{[t]},\hat{\mu}_t\right\rangle.$$

Let us represent $\hat{\mu}_t$ locally as $\hat{\mu}_t = \sum_{j = 1}^r \sigma_j \otimes \mu^{(j)}_t$ where $\sigma_1, \cdots, \sigma_r$ be a local frame for $V^{*}$ over some open subset $W \subset X$, and let $\mu^{(1)}_t, \cdots, \mu^{(r)}_t$ be complex measures for $X$ over $W$, all of which are supported on a compact subset $K$ of $X$. Let $h^{[t],*}$ denote the dual metric to $h^{[t]}$ for the dual bundle $V^{*} \rightarrow X$. Then, locally, we have the following
\begin{align*}
\abs{\int_X \left\langle f^{[t]}, \hat{\mu}_t\right\rangle} &= \abs{\int_X \sum_{j = 1}^r \left\langle f^{[t]},\sigma_j\right\rangle d\mu^{(j)}_t}\\
&\leq \int_X \sum_{j = 1}^r \abs{\left\langle f^{[t]},\sigma_j\right\rangle} d\mu^{(j)}_t\\
&\leq \int_X \abs{f}_{h^{[t]}}\left(\sum_{j = 1}^r \abs{\sigma_j}_{h^{[t],*}}\right)d\mu^{(j)}_t\\
&= \int_K \abs{f}_{h^{[t]}}\left(\sum_{j = 1}^r \abs{\sigma_j}_{h^{[t],*}}\right)d\mu^{(j)}_t\\
&\leq \sup_{K}\left(\sum_{j = 1}^r \abs{\sigma_j}_{h^{[t],*}}\right)\sup_{K}\abs{f}_{h^{[t]}}\int_{K} d\mu^{(j)}_t\\
&= \sup_{K}\left(\sum_{j = 1}^r \abs{\sigma_j}_{h^{[t],*}}\right)\mu^{(j)}_t(K)\sup_{K}\abs{f}_{h^{[t]}}.
\end{align*}
Now by Bergman's inequality, there exists a constant $C_{K} > 0$ such that $$\sup_K \abs{f}^2_{h^{[t]}} \leq C_K\int_{X} \abs{f}^2_{h^{[t]}} dV_g.$$

Therefore,
$$\sup_{f^{[t]} \in \mathcal{H}^2_t} \dfrac{\abs{\left\langle \xi^{(\mu)}_t, f^{[t]}\right\rangle}^2}{\norm{f}^2_{h^{[t]}}}$$
is bounded. Using a partition of unity, we can see that this boundedness does not depend on the choice of frame or local representative measures. So the mapping $\xi^{(\mu)}_t$ defines a smooth section $\xi^{(\mu)}$ of $E^{*}_h$. We thus have Theorem \ref{family-measures-trivial-theorem} as a consequence of Theorem \ref{griffiths-positivity-general-stein}.\\

In the Euclidean setting, Theorem \ref{family-measures-trivial-theorem} corresponds to Berndtsson's theorem on the $\log$-plurisubharmonic variation of compactly supported measures for product domains (\citep{berndtsson2018complex,berndtsson2017}). In our twisted setting, we have the following theorem.

\begin{theorem}
Let $U$ be a domain in $\C^m$ and $\Omega$ a pseudoconvex domain in $\C^n$. Let  $\delta > 0$, let $\varphi \in \mathcal{C}^{\infty}\left(U \times \Omega\right)$, and let $\eta \in \mathcal{C}^{\infty}(\Omega)$. Let $\left\{\mu_t\right\}_{t \in U}$ be a family of complex measures on $\Omega$ which are compactly supported inside $\Omega$. Define a section $\xi^{(\mu)}$ of $E^{*}_{\varphi}$ by
$$\forall t\in U, \forall F \in \HH_{\varphi(t,\cdot)}(\Omega) : \xi^{(\mu)}_t(F) = \int_{\Omega} F(z) d\mu_t(z).$$
Suppose that $\xi^{(\mu)}$ is a holomorphic section of $E^{*}_{\varphi}$. If $\Xi_{\delta,\eta}\left(e^{-\varphi}\right) \geq 0$ \emph{(resp. $> 0$)} in $U \times \Omega$, then the function 
$$U \ni t \mapsto \log\left(\norm{\xi^{(\mu)}}^2_{*,\varphi(t,\cdot)}\right)$$ 
is plurisubharmonic (resp. strictly plurisubharmonic) or identically $-\infty$.
\end{theorem}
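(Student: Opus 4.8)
The plan is to obtain this theorem as the Euclidean, trivial-line-bundle specialization of Theorem~\ref{family-measures-trivial-theorem}. First I would set $X=\Omega$ equipped with the flat Kähler metric $g$ induced by the Euclidean metric on $\C^n$; since $\Omega$ is pseudoconvex in $\C^n$ it is a Stein Kähler manifold, and Theorem~\ref{family-measures-trivial-theorem} applies to it even though $\Omega$ need not be relatively compact in an ambient manifold. I take $V\to\Omega$ to be the trivial line bundle with Hermitian metric $h=e^{-\varphi}$, so that $h^{[t]}=e^{-\varphi^{[t]}}$, the fibers $\mathcal{H}^2_t$ coincide with $\HH_{\varphi(t,\cdot)}(\Omega)$, and the dual norm $\norm{\cdot}_{*,h^{[t]}}$ is written $\norm{\cdot}_{*,\varphi(t,\cdot)}$ in the present notation. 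Under the canonical trivialization of $V^{*}$, the family $\{\mu_t\}$ of compactly supported complex measures is precisely a family $\{\hat\mu_t\}$ of compactly supported $V^{*}$-valued complex measures, and the section $\xi^{(\mu)}$ of $E^{*}_\varphi$ defined here agrees with the one in Theorem~\ref{family-measures-trivial-theorem}; its holomorphy is part of the hypotheses in both statements.

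Second I would translate the two positivity hypotheses of Theorem~\ref{family-measures-trivial-theorem} into the single hypothesis $\Xi_{\delta,\eta}(e^{-\varphi})\geq 0$ (resp. $>0$). Since the flat metric is Ricci-flat, $\ricci(g)=0$, so the Nakano condition required by Theorem~\ref{family-measures-trivial-theorem} reads $\dbar_\Omega\!\left(h^{-1}\del_\Omega h\right)+2\del_\Omega\dbar_\Omega\eta-(1+\delta)\del_\Omega\eta\wedge\dbar_\Omega\eta\geq_{\mathrm{Nak}}0$ (resp. $>_{\mathrm{Nak}}0$), which for a line bundle just means pointwise (semi)positivity of that $(1,1)$-form. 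In the block decomposition of $\Xi_{\delta,\eta}(h)$ recorded above, this form is $\tfrac{1+\delta}{\delta}$ times the lower-right block, so (semi)positive-definiteness of the whole Hermitian block matrix $\Xi_{\delta,\eta}(e^{-\varphi})$ forces (semi)positive-definiteness of that diagonal block --- a diagonal block of a positive (semi)definite Hermitian matrix being again positive (semi)definite. Combining this with the fact that, for the line bundle $V$, Griffiths and Nakano positivity of $\Xi_{\delta,\eta}(h)$ both coincide with (semi)positive-definiteness of the associated block matrix (the non-strict equivalence being the one noted in the excerpt), I conclude that $\Xi_{\delta,\eta}(e^{-\varphi})\geq 0$ (resp. $>0$) implies simultaneously $\Xi_{\delta,\eta}(h)\geq_{\mathrm{Griff}}0$ (resp. $>_{\mathrm{Griff}}0$) and the Nakano condition, so that all hypotheses of Theorem~\ref{family-measures-trivial-theorem} are met.

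Finally I would invoke Theorem~\ref{family-measures-trivial-theorem}: it yields that $U\ni t\mapsto\log\!\left(\norm{\xi^{(\mu)}}^2_{*,h^{[t]}}\right)=\log\!\left(\norm{\xi^{(\mu)}}^2_{*,\varphi(t,\cdot)}\right)$ is plurisubharmonic (resp. strictly plurisubharmonic) or identically $-\infty$, which is exactly the assertion. I do not anticipate a serious obstacle; the only point that needs care is the bookkeeping that repackages the two separate positivity hypotheses of Theorem~\ref{family-measures-trivial-theorem} into the single condition $\Xi_{\delta,\eta}(e^{-\varphi})\geq 0$ --- that is, keeping track of the vanishing of $\ricci(g)$ in the Euclidean setting and of the elementary fact that a diagonal block of a positive (semi)definite Hermitian matrix is positive (semi)definite.
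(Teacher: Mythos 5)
Your proposal is correct and matches the paper's (implicit) reasoning: the paper states this theorem without a separate proof, presenting it as the Euclidean specialization of Theorem~\ref{family-measures-trivial-theorem}, and your argument is exactly the bookkeeping that supports that claim. The two points you single out --- that the flat metric has $\ricci(g)=0$, and that semipositive-definiteness of the full block matrix $\Xi_{\delta,\eta}(e^{-\varphi})$ forces semipositive-definiteness of its lower-right diagonal block (which, after clearing the factor $\tfrac{\delta}{1+\delta}$ and using the rank-one coincidence of Griffiths and Nakano positivity, is precisely the second hypothesis of Theorem~\ref{family-measures-trivial-theorem}) --- are indeed the only steps that need to be checked, and both are sound.
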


Furthermore, letting $\hat{\mu}_t := \sigma \otimes \delta_{F(t)}$ for each $t \in U$, where $F$ is a holomorphic map from $U$ to $X$, $\delta_{F(t)}$ denotes a point-mass measure supported at $F(t)$ and $\sigma \in V^{*}_{F(t)}$, we obtain a slightly stronger result than Theorem \ref{bergman-kernel-nontrivial-theorem}. Namely, under the hypotheses of Theorem \ref{bergman-kernel-nontrivial-theorem}, the function $U \ni t \mapsto \log\left\langle \sigma \otimes \bar{\sigma},K_t\left(F(t),F(t)\right)\right\rangle$ is plurisubharmonic (or strictly so) or identically $-\infty$ for every $\sigma \in V^{*}_{F(t)}$.

\subsection{Twisted log-plurisubharmonic variation results for a class of non-trivial families of Stein manifolds}

Now let $Y$ be an $n$-dimensional Stein manifold, let $\rho$ be a smooth plurisubharmonic function on $\C^m \times Y$, and suppose that $X := \left\{(t,z) \in \C^{m} \times Y : \rho(t,z) < 0\right\}$ is not necessarily a product manifold in $\C^{m} \times Y$. We assume further that for each $t$, the restriction $\rho^{[t]}$ of $\rho$ to 
$$X_t := \{z \in Y : (t,z) \in X\} \subset Y$$
takes values in $[-1,0)$. This assumption is satisfied when $X$ is a bounded strongly pseudoconvex domain, for instance.

Let $g$ be a Kähler metric for $Y$ and choose $\hat{g} := \pi^{*}_{\C^m}g_0 \oplus \pi^{*}_Y g$ as a Kähler metric for $\C^m \times Y$, where $g_0$ is the Euclidean metric on $\C^m$. Let $V \rightarrow X$ be a holomorphic vector bundle and let $h$ be Hermitian metric for $V \rightarrow X$ such that $\dbar_t\left(h^{-1}\del_Y h\right) = 0$, $\dbar_t\left(h^{-1}\del_t h\right) \geq 0$, and $h^{[t]} := \restr{h}{X_t}$ satisfies
\begin{equation}
    \Theta\left(h^{[t]}\right) + \left(\ricci(g) + \del_Y\dbar_Y\rho^{[t]}\right)\otimes\mathrm{Id}_{V^{[t]}} \geq_{\mathrm{Nak}} 0
\end{equation}
over $X_t$, for each $t \in \C^m$. Here, $\del_t$ and $\dbar_t$ denote the $\del$ and $\dbar$ operator with respect to the $t$-variable, on $\C^m$.\\

In \citep{berndtsson2017}, Berndtsson reduces the proof of his log-plurisubharmonic variation results from the case of pseudoconvex subdomains of product domains to product domains. Using a similar approach, we will show two additional log-plurisubharmonic variation results for the class of non-trivial families of Stein manifolds just described.

\subsubsection{Variations of Bergman kernels}
Let $K_t$ denote the Bergman kernel for the Bergman projection $L^2\left(X_t,h^{[t]},dV_g\right) \rightarrow \mathcal{H}^2\left(X_t,h^{[t]},dV_g\right)$ throughout the following proof of Theorem \ref{bergman-kernel-nontrivial-theorem}.

\begin{proof}
Let $z \in X_t$ and $\sigma \in \left(V^{[t]}_z\right)^{*}$ be fixed. Let us fix $t \in \C^m$ -- say $t = 0$. Assume for the moment that $m = 1$. Since the property is local, we may restrict ourselves to a neighborhood of $t = 0$. Let $U_0$ be a sufficiently small neighborhood of $0$ so that all the fibers $X_t$ are contained in a fixed pseudoconvex domain 
$Y_{\varepsilon} = \{\zeta \in Y : \rho(0,\zeta) < \varepsilon\}.$ As the sublevel set of a smooth plurisubharmonic function inside a Stein manifold, $X$ is a Stein manifold. Upon exhausting $X$ by an increasing sequence of relatively compact strongly pseudoconvex domains, we may assume that $X$ is a bounded strongly pseudoconvex domain with smooth boundary.\\

For $j \geq 4$, define 
$\rho_j := \dfrac{1}{j}\log\left(e^{j^2\rho}+1\right) \text{ and } h_j := he^{-\rho_j},$
so that $h_j \xrightarrow[j \rightarrow +\infty]{} h$ when $\rho \leq 0$ and $h_j \xrightarrow[j \rightarrow +\infty]{} 0$ when $\rho > 0$. Then $\Xi_{\delta,\eta}(h_j)$ equals
$$\begin{pmatrix}
\dbar_t\left(h^{-1}\del_t h\right) + \del_t\dbar_t\rho_j & \del_t\dbar_Y\rho_j \\
\del_Y\dbar_t\rho_j & \dfrac{\delta}{1+\delta}\left[\dbar_X\left(\left(h^{[t]}\right)^{-1}\del_X h^{[t]}\right) + \del_Y\dbar_Y\rho^{[t]}_j + \ricci(g) + \dfrac{4e^{\frac{1+\delta}{2}\eta}}{1+\delta}\del_Y\dbar_Y\left(-e^{-\frac{1+\delta}{2}\eta}\right)\right]
\end{pmatrix}.$$

Our goal is to find a function $\eta$ on $Y$, depending on $\rho^{[t]}$, so that $\Xi_{\delta,\eta}(h_j) \geq 0$ and
$$\Theta\left(h^{[t]}_j\right) + \left(\ricci(g) + \dfrac{4e^{\frac{1+\delta}{2}\eta}}{1+\delta}\del_Y\dbar_Y\left(-e^{-\frac{1+\delta}{2}\eta}\right)\right)\otimes \mathrm{Id}_V \geq_{\mathrm{Nak}} 0,$$
for each $t \in \C^m$.\\

Let $\mathfrak{a}, \mathfrak{b} \in \{t,Y\}$ as indices. Then:
$$\del_{\mathfrak{a}}\dbar_{\mathfrak{b}}\rho_j = \dfrac{je^{j^2\rho}}{1+e^{j^2\rho}}\del_{\mathfrak{a}}\dbar_{\mathfrak{b}}\rho + \dfrac{j^3 e^{j^2\rho}}{\left(1+e^{j^2\rho}\right)^2}\del_{\mathfrak{a}}\rho \wedge \dbar_{\mathfrak{b}}\rho.$$

Therefore, $\Xi_{\delta,\eta}(h_j)$ can be decomposed as
\begin{align*}
\begin{pmatrix}
\dbar_t\left(h^{-1}\del_t h\right) & 0 \\
0 & \dfrac{\delta}{1+\delta}\left(\dbar_X\left(\left(h^{[t]}\right)^{-1}\del_X h^{[t]}\right) + \ricci(g) + \del_Y\dbar_Y\rho^{[t]}\right)
\end{pmatrix} &+ \begin{pmatrix}
\del_t\dbar_t\rho_j & \del_t\dbar_Y\rho_j \\ \del_Y\dbar_t\rho_j & \del_Y\dbar_Y\rho_j
\end{pmatrix}\\
&+ \begin{pmatrix}
0 & 0 \\ 0 & M^{\eta,\rho}_j
\end{pmatrix}
\end{align*}
where
\begin{align*}
M^{\eta,\rho}_j &:= -\dfrac{1}{1+\delta}\left(\dfrac{je^{j^2\rho^{[t]}}}{1+e^{j^2\rho^{[t]}}}\del_Y\dbar_Y\rho^{[t]} + \dfrac{j^3 e^{j^2\rho^{[t]}}}{\left(1+e^{j^2\rho^{[t]}}\right)^2}\del_Y\rho^{[t]} \wedge \dbar_Y\rho^{[t]}\right)\\
&\ \ \ \, \, + \dfrac{\delta}{1+\delta}\left(2\del_Y\dbar_Y\eta-(1+\delta)\del_Y\eta\wedge\dbar_Y\eta-\del_Y\del_Y\rho^{[t]}\right).
\end{align*}

Our hypotheses clearly imply that
$$\Xi_{\delta,\eta}(h_j) \geq \begin{pmatrix}
0 & 0 \\ 0 & M^{\eta,\rho}_j
\end{pmatrix}$$ 
and
$$\Theta\left(h^{[t]}_j\right) + \left(\ricci(g) + \dfrac{4e^{\frac{1+\delta}{2}\eta}}{1+\delta}\del_Y\dbar_Y\left(-e^{-\frac{1+\delta}{2}\eta}\right)\right)\otimes \mathrm{Id}_V \geq_{\mathrm{Nak}} M^{\eta,\rho}_j\otimes\mathrm{Id}_V,$$
for each $t \in \C^m$.

Therefore, all we need to do is find a function $\eta$ on $Y$, depending on $\rho^{[t]}$, such that $M^{\eta,\rho}_j \geq 0$.\\

Now note that
$$\forall j > 0 : \dfrac{e^{j^2\rho^{[t]}}}{1+e^{j^2\rho^{[t]}}} < 1 \text{ and } \dfrac{e^{j^2\rho^{[t]}}}{\left(1+e^{j^2\rho^{[t]}}\right)^2} < \dfrac{1}{4}.$$

So, with $\eta := f\left(\rho^{[t]}\right)$ for some function $f$ to be determined shortly,
\begin{align*}
M^{\eta,\rho}_j &> \dfrac{\delta}{1+\delta}\left(-\dfrac{j}{\delta}\del_Y\dbar_Y\rho^{[t]} -\dfrac{j^3}{4\delta}\del_Y\rho^{[t]}\wedge\dbar_Y\rho^{[t]} + 2\del_Y\dbar_Y\eta-(1+\delta)\del_Y\eta\wedge\dbar_Y \eta-\del_Y\dbar_Y\rho^{[t]}\right)\\
&= \dfrac{\delta}{1+\delta}\dfrac{1}{\delta}\left(2\delta\del_Y\dbar_Y\eta-j\del_Y\dbar_Y\rho^{[t]}-\delta(1+\delta)\del_Y\eta\wedge\dbar_Y\eta-\dfrac{j^3}{4}\del_Y\rho^{[t]}\wedge\dbar_Y\rho^{[t]}-\delta \del_Y\dbar_Y\rho^{[t]}\right)\\
&=\dfrac{1}{1+\delta}\left(2\delta\del_Y\dbar_Y\eta-j\del_Y\dbar_Y\rho^{[t]}-\delta(1+\delta)\del_Y\eta\wedge\dbar_Y\eta-\dfrac{j^3}{4}\del_Y\rho^{[t]}\wedge\dbar_Y\rho^{[t]}-\delta \del_Y\dbar_Y\rho^{[t]}\right)\\
&= \dfrac{1}{1+\delta}\left[\left(2\delta f'\left(\rho^{[t]}\right)-j-\delta\right)\del_Y\dbar_Y\rho^{[t]}\right]\\
&\ \ \ \ \ \ +\dfrac{1}{1+\delta}\left[\left(2\delta f''\left(\rho^{[t]}\right)-\delta(1+\delta)\left(f'\left(\rho^{[t]}\right)\right)^2-\dfrac{j^3}{4}\right)\del_Y\rho^{[t]}\wedge\dbar_Y\rho^{[t]}\right].
\end{align*}

The function
$$f\left(\rho^{[t]}\right) := C_2 -\dfrac{2}{1+\delta}\log\left(\cos\left(\dfrac{\sqrt{1+\delta}j^{3/2}}{4\sqrt{\delta}}(\rho^{[t]}+C_1\delta)\right)\right)$$
satisfies
$$2\delta f''\left(\rho^{[t]}\right)-\delta(1+\delta)\left(f'\left(\rho^{[t]}\right)\right)^2-\dfrac{j^3}{4} = 0,$$
for any constants $C_2$ and $C_1$. The constant $C_1$ will be determined later and we can just let $C_2 = 0$. Now,
$$f'\left(\rho^{[t]}\right) = \dfrac{j^{3/2}}{2\sqrt{\delta(1+\delta)}}\tan\left(\dfrac{j^{3/2}}{4}\sqrt{\dfrac{1+\delta}{\delta}}(\rho^{[t]}+C_1\delta)\right),$$
and so
\begin{align*}
&-\dfrac{j}{\delta}\del_Y\dbar_Y\rho^{[t]} -\dfrac{j^3}{4\delta}\del_Y\rho^{[t]}\wedge\dbar_Y\rho^{[t]} + 2\del_Y\dbar_Y\eta-(1+\delta)\del_Y\eta\wedge\dbar_Y \eta-\del_Y\dbar_Y\rho^{[t]}\\
&= \dfrac{1}{\delta}\left(\sqrt{\dfrac{\delta j^3}{1+\delta}}\tan\left(\dfrac{j^{3/2}}{4}\sqrt{\dfrac{1+\delta}{\delta}}(\rho^{[t]}+C_1\delta)\right)-(j+\delta)\right)\del_Y\dbar_Y\rho^{[t]}.
\end{align*}

For each $j \geq 4$, we can always find some $\delta := \delta_j > 0$ such that
$$\sqrt{\dfrac{\delta j^3}{1+\delta}} > j+\delta.$$

Furthermore, if we let $C_1 := C_{\delta,j} := \dfrac{1}{\delta}\left(\pi\sqrt{\dfrac{\delta}{j^3(1+\delta)}}+1\right)$, then $C_1\delta \leq 1+\pi/8$ for $j \geq 4$, whence $\dfrac{j^{3/2}}{4}\sqrt{\dfrac{1+\delta}{\delta}}(\rho^{[t]}+C_1\delta)$ takes values in $[\pi/4,\pi/2)$ since $-1 \leq \rho^{[t]} < 0$. Hence,
$$\sqrt{\dfrac{\delta j^3}{1+\delta}}\tan\left(\dfrac{j^{3/2}}{4}\sqrt{\dfrac{1+\delta}{\delta}}(\rho^{[t]}+C_1\delta)\right)-(j+\delta) > 0.$$

Therefore, Theorem \ref{bergman-kernel-trivial-theorem} applies to our situation where the product domain is $U_0 \times Y_\varepsilon$. Hence, by Proposition \ref{prop-bergman-approx} (combined with Proposition \ref{runge-l2-approx-prop}) $\log\left\langle\sigma\otimes\bar{\sigma},K_t(z,\bar{z})\right\rangle$, over a relatively compact strongly pseudoconvex subdomain of $X$, can be written as an increasing limit of functions that are subharmonic with respect to $t$. We then conclude, by Proposition \ref{upper-semilemma-bergman}, that the function $t \mapsto \log\left\langle\sigma\otimes\bar{\sigma},K_t(z,\bar{z})\right\rangle$ is also subharmonic by upper semicontinuity. Again, by upper semicontinuity, we get that $t \mapsto \log\left\langle\sigma\otimes\bar{\sigma},K_t(z,\bar{z})\right\rangle$ is plurisubharmonic if $m \geq 1$ since its restriction to any line is subharmonic. Finally, Proposition \ref{ramadanov-manifold-bergman-union-domains} implies that the function $t \mapsto \log\left\langle\sigma\otimes\bar{\sigma},K_t(z,\bar{z})\right\rangle$, over $X$, is the decreasing limit of a sequence of plurisubharmonic functions and is thus plurisubharmonic.\\

Of course, the plurisubharmonicity is strict if the twisted curvature conditions are strict. However, we will not repeatedly state this in what follows.\\

So far, we have shown that $t \mapsto \log\left\langle\sigma\otimes\bar{\sigma},K_t(z,\bar{z})\right\rangle$ is plurisubharmonic for $z$ fixed. We will now show that for every $\sigma \in \left(V^{[t]}_z\right)^{*}$, $(t,z) \mapsto \log\left\langle\sigma\otimes\bar{\sigma},K_t(z,\bar{z})\right\rangle$ is plurisubharmonic. As before, we can choose a sufficiently small neighborhood $U_0$ of $0$ such that all the fibers $X_t$ are contained in a fixed pseudoconvex domain $Y_{\varepsilon}$.

In general, we can find a holomorphic tangent vector field $\mathfrak{F}$ on $X$ such that $d\pi_{\C^m}(\mathfrak{F}) = \del/\del t$ (see \citep[Lemma 2.3]{berndtsson-paun-2008}).
Let $\Phi_{\mathfrak{F}}$ denote the flow of $\mathfrak{F}$ and let $\Phi^t_{\mathfrak{F}}$ denote the flow at time $t$. Let $\tilde{X}_t := \Phi^t_{\mathfrak{F}}(X_t)$. Since $\Phi^t_{\mathfrak{F}}$ maps $X_t$ to $\tilde{X}_t$ biholomorphically, the invariance of Bergman kernels implies that
$K_t\left(z,\bar{z}\right) = \tilde{K}_t\left(\Phi^{t}_{\mathfrak{F}}(z),\overbar{\Phi^{t}_{\mathfrak{F}}(z)}\right)$ for any $z \in X_t$. Here $K_t$ is the Bergman kernel for the projection $L^2\left(X_t,h^{[t]},dV_g\right) \rightarrow \mathcal{H}^2\left(X_t,h^{[t]},dV_g\right)$ and $\tilde{K}_t$ is the one for the projection $L^2\left(\tilde{X}_t,\left(\Phi^{-t}_{\mathfrak{F}}\right)^{*}h^{[t]},\left(\Phi^t_{\mathfrak{F}}\right)_{*}dV_g\right) \rightarrow \mathcal{H}^2\left(\tilde{X}_t,\left(\Phi^{-t}_{\mathfrak{F}}\right)^{*}h^{[t]},\left(\Phi^t_{\mathfrak{F}}\right)_{*}dV_g\right).$

We are now in the previous situation, and so for any fixed $z \in X_t$, the function
$$t \mapsto \log\left\langle \sigma \otimes \bar{\sigma}, K_t\left(z,\bar{z}\right)\right\rangle = \log\left\langle \sigma \otimes \bar{\sigma}, \tilde{K}_t\left(\Phi^{t}_{\mathfrak{F}}(z),\overbar{\Phi^{t}_{\mathfrak{F}}(z)}\right)\right\rangle$$ is plurisubharmonic for any $\sigma \in \left(V^{[t]}_{z}\right)^{*}$. We may again assume that $m = 1$ without loss of generality by the previous upper semicontinuity arguments.

The flow $\Phi^{t}_{\mathfrak{F}}$ evaluated at $z$ has the first-order Taylor expansion $\Phi^{t}_{\mathfrak{F}}(z) = z+t\mathfrak{F}(z)+O(t^2)$ in $t$ around $t = 0$. Therefore, the function 
$t \mapsto \log\left\langle \sigma \otimes \bar{\sigma}, \tilde{K}_t\left(z,\bar{z}\right)\right\rangle$ is subharmonic in the direction of $\mathfrak{F}$, which is an arbitrary lift of $\del/\del t$. We thus have the the subharmonicity of the function $t \mapsto \log\left\langle \sigma \otimes \bar{\sigma}, \tilde{K}_t\left(z,\bar{z}\right)\right\rangle$ in a general non-vertical direction in $X$. 

Now let $z = \Phi^{-t}_{\mathfrak{F}}(w)$. Then the function $t \mapsto \log\left\langle \sigma \otimes \bar{\sigma}, K_t\left(\Phi^{-t}_{\mathfrak{F}}(w),\overbar{\Phi^{-t}_{\mathfrak{F}}(w)}\right)\right\rangle$
is subharmonic. Since the vector field $\mathfrak{F}$ is an arbitrary lift of $\del/\del t$, this shows that the function $(t,z) \mapsto \log\left\langle\sigma\otimes\bar{\sigma},K_t(z,\bar{z})\right\rangle$
is subharmonic in every non-vertical direction. In the vertical directions $z \mapsto \log\left\langle\sigma\otimes\bar{\sigma},K_t(z,\bar{z})\right\rangle$ is trivially subharmonic, as it is the sum of squares of holomorphic functions. This completes the proof.
\end{proof}

\subsubsection{Variations of families of compactly supported measures}

Now let $\left\{\hat{\mu}_t\right\}_{t\in U}$ be a family of $\left(V^{[t]}\right)^{*}$-valued complex measures over $X_t$ that are all locally supported in a compact subset of $X$. For each section $f \in \Gamma(E_h)$, define the measure $\mu^{(f)}_t = \left\langle f^{[t]},\hat{\mu}_t\right\rangle$ and define the mapping $\xi^{(\mu)}_t$ by
$$f^{[t]} \mapsto \left\langle\xi^{(\mu)}_t,f^{[t]}\right\rangle := \mu^{(f)}_t(X_t) = \int_{X_t} \left\langle f^{[t]},\hat{\mu}_t\right\rangle.$$

Then, similarly to the case of trivial families of Stein manifolds, $\xi^{(\mu)}$ defines a smooth section of $E^{*}_h$. We now prove Theorem \ref{family-measures-nontrivial-theorem}.

\begin{proof}
By the standard exhaustion argument, we may assume that $X$ is a bounded strictly pseudoconvex domain with smooth boundary. Since the result is local, we may after restricting $t$ to lie in a small neighborhood $V_0$ of a given point, say $0$, assume that $X \subset V_0 \times Y_{\varepsilon}$ where $Y_{\varepsilon} := \{z \in Y : \rho(0,z) < \varepsilon\}$ is a pseudoconvex domain in $Y$. Then we apply Theorem \ref{family-measures-trivial-theorem} to $V_0 \times Y_{\varepsilon}$ with $h$ replaced by $h_j$, where $h_j$ is the approximating metric used in the proof of Theorem \ref{bergman-kernel-nontrivial-theorem}. Recall that $h_j := h e^{-\rho_j}$ where $\rho_j$ is a function of $\rho$ such that $e^{-\rho_j}$ converges to $1$ as $j \rightarrow +\infty$ when $\rho \leq 0$, and $e^{-\rho_j}$ converges to $0$ as $j \rightarrow +\infty$ when $\rho > 0$. Therefore,
$$\norm{f}^2_{L^2\left(\{t\} \times Y_{\varepsilon},h^{[t]}_j\right)} := \int_{\{t\} \times Y_{\varepsilon}} \abs{f}^2_{h^{[t]}_j} dV_g \xrightarrow[j \rightarrow +\infty]{} \int_{X_t} \abs{f}^2_{h^{[t]}} dV_g =: \norm{f}^2_{L^2\left(X_t,h^{[t]}\right)},$$
and similarly,
$$\abs{\abs{\int_{X_t}\left\langle f^{[t]},\hat{\mu}_t\right\rangle} - \abs{\int_{\{t\}\times Y_{\varepsilon}}\left\langle f^{[t]},\hat{\mu}_t\right\rangle}} \leq \abs{\int_{X_t}\left\langle f^{[t]},\hat{\mu}_t\right\rangle - \int_{\{t\}\times Y_{\varepsilon}} \left\langle f^{[t]},\hat{\mu}_t\right\rangle} \xrightarrow[j \rightarrow +\infty]{} 0,$$
so that
$$\norm{\xi^{(\mu)}}^2_{*,h^{[t]}_j,\{t\}\times Y_{\varepsilon}} \xrightarrow[j \rightarrow +\infty]{} \norm{\xi^{(\mu)}}^2_{*,h^{[t]},X_t}$$
and so the theorem follows.
\end{proof}

Once again, letting $\hat{\mu}_t := \sigma \otimes \delta_{F(t)}$ for each $t \in U$, where $F$ is a holomorphic map from $U$ to $X_t$, $\delta_{F(t)}$ denotes a point-mass measure supported at $F(t)$ and $\sigma \in V^{*}_{F(t)}$, we see that under the hypotheses of Theorem \ref{bergman-kernel-nontrivial-theorem}, the function $U \ni t \mapsto \log\left\langle \sigma \otimes \bar{\sigma},K_t\left(F(t),F(t)\right)\right\rangle$ is plurisubharmonic (or strictly so) or identically $-\infty$ for every $\sigma \in \left(V^{[t]}\right)^{*}_{F(t)}$.

\section*{Acknowledgements}
The author is very grateful to his doctoral advisor Prof. Dror Varolin for suggesting the problem of the variation of Bergman spaces for unbounded Stein manifolds, which led to the work presented in this article. The author is also grateful to Prof. Dror Varolin for his continued guidance until the completion of this project.

\printbibliography
\lipsum[184-187]
\end{document}